\documentclass[11pt, letterpaper, oneside, reqno]{amsart}

\usepackage{amssymb, amsthm, amsmath, amsfonts}
\usepackage{mathtools, dsfont, mathrsfs, bm, enumitem, url, upgreek}
\usepackage{graphicx}
\usepackage{multicol}
\usepackage{algorithmicx,algorithm,algpseudocode}

\usepackage{bbm}
\usepackage{setspace}
\usepackage{multirow}
\usepackage{tikz}
\usepackage{longtable, booktabs}
\usepackage{dsfont}
\usepackage{fix-cm}

\graphicspath{{figures/}}

\usepackage[utf8]{inputenc}    
\usepackage[T1]{fontenc}       
\usepackage{tabularx}
\usepackage{lmodern}           
\usepackage{color, xcolor}
\usepackage[colorlinks=true, linkcolor=red, citecolor=blue]{hyperref}

\usepackage[margin=1in]{geometry}

\usepackage{fancyhdr}
\fancypagestyle{firstpage}{
     \fancyhf{}
    \fancyfoot[C]{\scriptsize\thepage}  
}

\DeclareMathOperator{\Tr}{tr}

\DeclareMathOperator{\Diag}{Diag}

\DeclareMathOperator*{\argmin}{arg\,min}
\DeclareMathOperator*{\argmax}{arg\,max}

\newcommand{\be}{\begin{equation}}
\newcommand{\ee}{\end{equation}}

\newcommand{\opt}{^\star}
\newcommand{\inner}[2]{\big \langle #1, #2 \big \rangle }

\newcommand{\R}{\mathbb{R}}

\newcommand{\Let}{\triangleq}

\newcommand{\dd}{\mathrm{d}}

\newtheorem{theorem}{Theorem}
\newtheorem{lemma}{Lemma}
\newtheorem{proposition}{Proposition}

\newtheorem{definition}{Definition}
\newtheorem{assumption}{Assumption}
\newtheorem{remark}{Remark}

\begin{document}
\onehalfspacing

\title{Selective Ambulance Dispatch Under Contextual Travel-Time Uncertainty}

\author{
Zikun Lin, 
Daniel Zhuoyu Long,
Viet Anh Nguyen
}
\thanks{\small The authors are with the Department of Systems Engineering and Engineering Management, The Chinese University of Hong Kong (\texttt{zklin@se.cuhk.edu.hk, zylong@se.cuhk.edu.hk, nguyen@se.cuhk.edu.hk})}

\begin{abstract}
Ambulance response is time-critical in out-of-hospital cardiac arrest (OHCA), where dispatchers must balance timely arrivals with limited fleet capacity. Static territories and deterministic travel-time estimates are vulnerable to dynamic congestion, while always-dual dispatch adds redundancy but consumes fleet capacity. We propose IDEAL (Intelligent Dual dispatch of Emergency AmbuLances), a selective dual-dispatch framework that sends a second ambulance only when the optimistic gap between primary and secondary paths exceeds a threshold. IDEAL learns context-specific edge travel times from trip-level dispatch records, including unobserved routes, using a weakly supervised bilevel representation network. We train the nonsmooth model with mini-batch conservative gradients and prove an asymptotic convergence guarantee. IDEAL models uncertainty via Burg-divergence perturbations to a shared metric in the learned representation space, thereby inducing correlated changes in edge travel times and learning context-specific radii from historical underprediction errors. For real-time decisions, IDEAL casts optimistic-gap computation as a difference-of-convex program and derives an efficient oracle with complexity guarantees. In collaboration with the Hong Kong Fire Services Department, we evaluate IDEAL using historical OHCA records and real-time adaptive simulations. The results achieve a stronger response-time/resource trade-off relative to all region-based and Google-based baselines.
\end{abstract}

\maketitle
\thispagestyle{firstpage}

\section{Introduction}\label{sec:introduction}

Ambulance dispatchers race the clock when a caller reports a medical emergency outside a hospital. In out-of-hospital cardiac arrest (OHCA), minutes or even seconds can make the difference between survival and death: based on a survival model in~\cite{ref:waalewijn2001survival}, estimated survival is about 49\% with immediate resuscitation, compared with about 22\% after a four-minute delay. In addition, cities also face rising emergency demand as populations age and cardiovascular events become increasingly common among older adults~\cite{ref:north2012intersection}. Dispatchers, therefore, must make judicious choices while ambulance resources remain limited.

We define response time as the time from the emergency call to reaching the patient. It can be decomposed into three parts: activation time, ambulance travel time, and vertical access time. Activation time covers call processing and crew turnout. Travel time is the time required to drive from the depot to the demand. Vertical access time is the time required to reach the patient after the ambulance arrives at the address, typically within a building. Dispatch centers can often closely control activation time. In Hong Kong, 96.5\% of emergency calls met a two-minute activation target during 1--14 February 2009~\cite{ref:pac2009fsd}. In contrast, travel time is more difficult to control and predict, as many emergency medical services (EMS) systems report~\cite{ref:ufa2020soc, ref:indiantown2019ems}. We therefore treat travel time as the main source of uncertainty and the part most affected by dispatch decisions.

Dispatch centers often follow one of two rules. Region-based dispatch assigns each demand location to a fixed territory. These regions simplify operations but ignore traffic disruptions such as incidents, signal cycles, and short-lived congestion. Such rigidity can delay arrivals when traffic shifts across neighborhoods~\cite{ref:jagtenberg2017optimal}. Prediction-based dispatch instead chooses the unit with the shortest predicted travel time. The Hong Kong system uses this idea in practice~\cite{ref:audit2008fsd}. However, point predictions can be misleading when traffic conditions change during the trip and nearby roads slow down at the same time. Two ambulances may have nearly identical predicted travel times, yet the selected one may arrive later. Dispatchers, therefore, face a shortest-path problem in which edge travel times depend on a dynamic traffic state rather than only on the static road map.

Some EMS systems create response redundancy for high-priority calls by sending a second ambulance or another trained responder in parallel with the primary response; dual-dispatch and multi-responder OHCA policies have been studied in several systems~\cite{ref:hollenberg2009dual, ref:kim2019application, ref:strnad2022impact}.
This redundancy consumes capacity because the second ambulance cannot respond to the next demand. Dispatchers, therefore, face two linked decisions: whether to send a second ambulance and, if so, which path it should follow.
Traffic correlations complicate this choice because congestion and incidents can slow many nearby links at the same time~\cite{ref:zhang2017lagrangian, ref:srinivasan2014reliable}. A second ambulance that shadows the primary path can be delayed for the same reason, even if it departs from a different depot. Rules that simply choose the second-best predicted option may consume an additional ambulance while offering little improvement.

These considerations motivate a selective-redundancy rule, but two practical questions remain: when do static or prediction-only rules fail, and what information would justify sending a second ambulance? The next subsection answers these questions through examples from the study region.

\subsection{Motivating Examples}\label{sec:examples}
Dispatchers in our study area, the northwest region of Hong Kong Island, often choose between two nearby ambulance depots, Morrison Hill Ambulance Depot (MOR) and Mount Davis Ambulance Depot (MOU). The examples below show three practical problems: static regions fail near the boundary, point forecasts change during the trip, and optimistic--pessimistic traffic-model estimates overlap so much that they do not support a clear second-ambulance decision. We generate these examples with a travel-time simulator that mimics turn-by-turn navigation. The simulator queries the Google Routes API at dispatch, traverses one road segment using the current prediction for that segment, and then re-queries from the new location; the detailed procedure is provided in Appendix~\ref{ec:adaptive-simulation}. We focus on 50 demand nodes in the northwest region of Hong Kong Island, where historical incidents suggest that both depots can be competitive.

\textbf{Issue 1: Static Regions Fail Near the Boundary.} A region-based policy assigns each demand node to a fixed depot. This rule simplifies operations but ignores real-time traffic. We replay the 50 nodes under a range of traffic conditions in January 2025. For each replay, we call a depot choice \textit{optimal} if its realized travel time is no greater than that of the other depot. This region-based policy selects the optimal depot in only about 56\% of runs. Static regions, therefore, perform poorly in the overlap zone, where both depots often compete to serve the same demand nodes.

\textbf{Issue 2: A Point Forecast Can Reverse During the Trip.} On January 30, 2025, at 18{:}29, we simulated an incident in Mid-Levels, a residential area on Hong Kong Island. Google's initial query predicted travel times of 712 seconds from MOR to the incident and 802 seconds from MOU. A prediction-based rule would dispatch from MOR. However, our adaptive simulator showed realized travel times of 932 seconds from MOR and 792 seconds from MOU. 
Figure~\ref{fig:illustration} plots the depots and the demand and traces the total predicted travel time, defined as the elapsed time plus the current predicted remainder, as the simulated ambulances progressed segment by segment. The forecast for the ambulance from MOR climbed by more than two minutes after departure, while the forecast for the one from MOU stayed nearly flat.

\begin{figure}[ht]
    \centering
    \includegraphics[width=\linewidth]{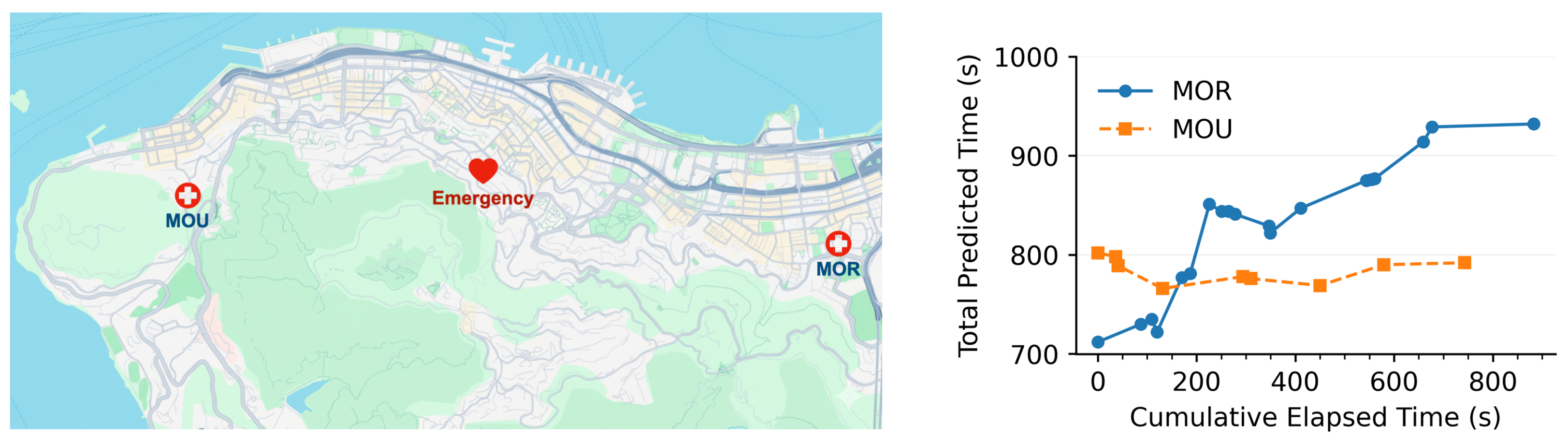}
    \caption{(Left) Two depots respond to one demand. The heart marks the incident; crosses mark MOU (west) and MOR (east). (Right) Segment-by-segment evolution of Google Routes' total predicted travel time, defined as elapsed time plus the current predicted remainder. Each marker is recorded when a segment completes, and the API is re-queried. The horizontal axis is cumulative elapsed time; the vertical axis is total predicted travel time.}
    \label{fig:illustration}
\end{figure}

\textbf{Issue 3: Optimistic--Pessimistic Traffic-Model Estimates Overlap and Hide Dominance.}
The Google Routes API can also provide optimistic and pessimistic traffic-model travel-time estimates at dispatch.\footnote{Google Routes API traffic-model documentation: \url{https://developers.google.com/maps/documentation/routes/traffic-model}, accessed March~1,~2025.} Among the selected demand nodes, the two depots' optimistic--pessimistic estimate ranges overlap for about 99.5\% of simulations, with a mean overlap of about 377~seconds. As the mean realized travel time is about 596~seconds, the overlap often spans a large fraction of the trip. In the Mid-Levels incident in the previous example, the API reported optimistic--pessimistic estimates of 668--1379~seconds for MOR and 646--1233~seconds for MOU, so the wide estimate ranges overlap almost completely and offer no clear choice.

These issues motivate a selective redundancy rule that, given the primary path, asks whether another path could plausibly arrive earlier by a meaningful margin.

\subsection{IDEAL Framework for Scenario-Aware Selective Dual Dispatch}\label{sec:framework}
Motivated by these examples, we develop IDEAL (Intelligent Dual dispatch of Emergency AmbuLances), a scenario-aware selective dual-dispatch framework that serves as a decision-support overlay on the primary dispatch workflow. From historical dispatch records, IDEAL learns a model that maps the call-time context to a nominal edge-travel-time vector and an uncertainty radius. These two outputs define the scenario set used by IDEAL. Given a primary path from an internal dispatch rule or a routing engine, IDEAL evaluates whether another path offers a plausible arrival-time advantage sufficient to justify the capacity cost of activating a second ambulance.

Figure~\ref{fig:framework} illustrates IDEAL as a three-step flow. In the \textit{scenario set construction} step, the bilevel representation learning network maps static edge information and call-time context to learned representation vectors and a nominal edge-travel-time vector, while the radius prediction network maps the learned contextual representation vector to an uncertainty radius; together, these outputs define the scenario set. In the \textit{optimistic gap computation} step, IDEAL evaluates the primary path over this scenario set and identifies an optimistic scenario, the corresponding optimistic time gap, and a candidate secondary path. In the \textit{decision} step, IDEAL compares the gap to the threshold and dispatches the second ambulance only when the gap exceeds it.

\begin{figure}[ht]
    \centering
    \includegraphics[width=\linewidth]{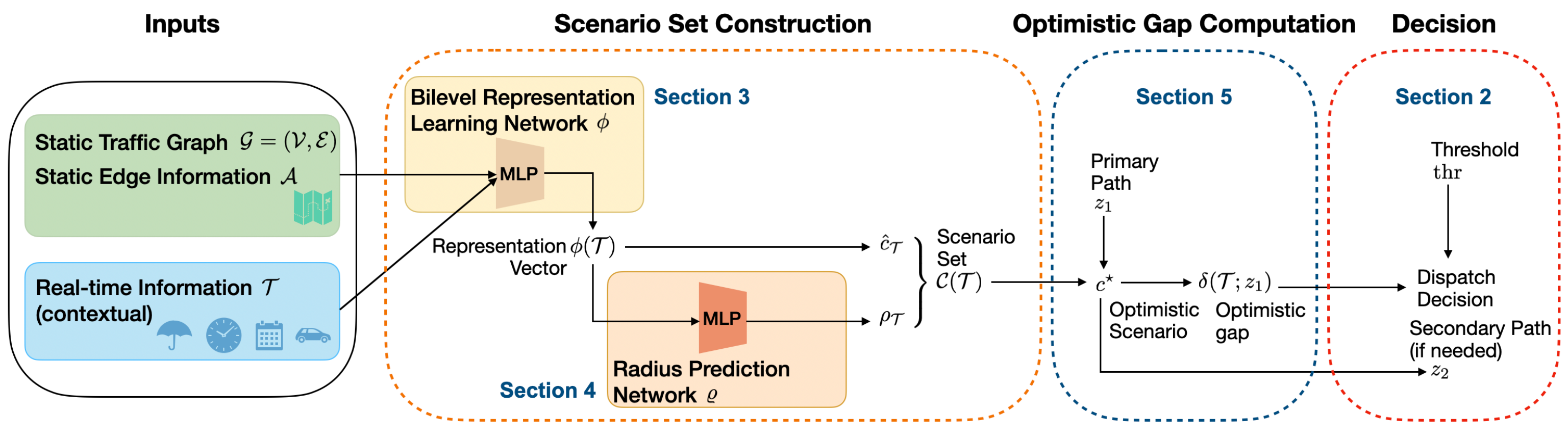}
    \caption{
    IDEAL decision flow. 
    \textbf{Inputs:} a traffic graph $\mathcal G=(\mathcal V,\mathcal E)$ with edge attributes $\mathcal A$, and real-time contextual variables $\mathcal T$. 
    \textbf{Step~1: Scenario set construction.} The bilevel representation learning network $\phi=(\vartheta,\theta,\psi)$ fuses $\mathcal A$ and $\mathcal T$ to produce the representation vector $\phi(\mathcal T)$, and the nominal edge-travel-time vector $\hat c_{\mathcal T}$; the radius prediction network $\varrho$ maps the learned representation vector to the radius $\rho_{\mathcal T}$. These two outputs define the scenario set $\mathcal C(\mathcal T)$.
    \textbf{Step~2: Optimistic gap computation.} Given a primary path $z_1$, IDEAL finds the optimistic scenario $c\opt$ over $\mathcal C(\mathcal T)$ and computes the optimistic time gap $\delta(\mathcal T;z_1)$ and a candidate secondary path $z_2$.
    \textbf{Step~3: Decision.} IDEAL compares $\delta(\mathcal T;z_1)$ with the threshold and dispatches the second ambulance along $z_2$ only when the gap exceeds the threshold.
    }
    \label{fig:framework}
\end{figure}

\noindent\textbf{Contributions and technical innovations.} This paper makes three advances.

\begin{enumerate}[leftmargin=7mm]
\item \textit{Weakly supervised bilevel travel-time representation learning.} 
We develop a weakly supervised representation network that fuses static road attributes and call-time contextual information to produce representation vectors and predict travel times for each edge. The dataset used for representation learning does not reveal the detailed routes that ambulances actually take. We therefore use a bilevel formulation, in which the lower-level problem selects the shortest path based on the travel-time estimates for each edge, and the upper-level problem updates the network parameters. The overarching objective is to minimize the matching loss between the predicted and observed travel times, plus a graph-regularization term that enforces spatial coherence in static edge embeddings. We train the nonsmooth bilevel model using a mini-batch conservative gradient method and prove an asymptotic convergence guarantee for the generated iterates.

\item \textit{Contextual correlated scenario set from metric perturbations.} We construct a contextual scenario set by perturbing a shared metric in the learned representation space, with Burg divergence measuring the perturbation magnitude. Under this construction, edges with similar contextual representation vectors respond similarly to the same metric perturbation. From historical underprediction errors, we train a separate radius prediction network whose input is the learned contextual representation vector and whose output is the context-specific radius.

\item \textit{Efficient oracle and convergence analysis for the optimistic gap.} We show that the optimistic gap over the Burg scenario set can be written as a difference-of-convex program over the metric matrix. The key computational step is an efficient oracle for the DCA (difference-of-convex algorithm) matrix subproblem: the optimal matrix admits a quasi-closed-form expression that depends on a scalar multiplier, which can be obtained via a one-dimensional root search over an interval. This oracle replaces a high-dimensional matrix optimization at dispatch time with an eigenvalue computation and a scalar search. We establish the resulting per-iteration complexity and convergence guarantee for the dispatch-time DCA procedure.
\end{enumerate}

We evaluate IDEAL through controlled replay experiments that construct real-time OHCA dispatch scenarios on real road networks and compare our policy against region-based and Google-based baselines. The results show that, at similar dispatch volumes, IDEAL reduces ambulance travel time compared with the baselines. Beyond average performance, IDEAL also improves tail performance by reducing the risk of severe response delays in high-uncertainty scenarios. Additional transfer experiments in another urban district further demonstrate that our policy remains robust to changes in road network structure.

\noindent\textbf{Governmental Collaboration.} We conducted this study with the Hong Kong Fire Services Department. The department provided de-identified OHCA records from January~2014 to December~2023, and a senior ambulance officer advised on dispatch procedures and constraints. This partnership ensured that the policy remained interpretable and auditable within existing workflows.

\noindent\textbf{Broader Impact.} The same framework applies to other time-critical services that travel under correlated, context-varying conditions, including fire and rescue services~\cite{ref:wan2022initial, ref:hassler2024socio}, police patrols~\cite{ref:deangelo2023police, ref:blanes2018effect}, roadside assistance~\cite{ref:salum2020impact}, utility restoration crews~\cite{ref:arif2018optimizing, ref:zhou2024resilience, ref:balut2019ranking, ref:liu2020survivability}, and rapid repair teams for critical infrastructure~\cite{ref:lin2024risk}.

\subsection{Related Work} \label{sec:related-work}
Ambulance location and dispatching have long been core topics in operations research; see the survey of location and relocation models in~\cite{ref:brotcorne2003ambulance}. Classical approaches include the hypercube queueing model and covering formulations with expected- and backup-coverage extensions~\cite{ref:larson1974hypercube, ref:toregas1971location, ref:church1974maximal, ref:daskin1983maximum, ref:hogan1986backup}. Modern studies incorporate uncertainty and redundancy via robust dual-coverage and location--routing designs~\cite{ref:wajid2022robust, ref:boutilier2020ambulance}. Because dispatch decisions depend on travel-time distributions, Bayesian and large-network methods estimate predictive travel-time uncertainty from ambulance GPS data~\cite{ref:westgate2013travel, ref:westgate2016large}. Integrated siting--dispatch and dynamic dispatch--relocation policies, including ADP-based redeployment, yield consistent gains over static or closest-idle rules~\cite{ref:yoon2021stochastic, ref:gendreau2001dynamic, ref:nasrollahzadeh2018real, ref:maxwell2010approximate}.

Representation learning provides a paradigm for learning compact embeddings that capture latent structure in complex data~\cite{ref:bengio2013representation, ref:lecun2015deep}. For graph-structured transportation networks, spatio-temporal GNNs combine topology with temporal signals to forecast traffic states and estimate travel time/ETA~\cite{ref:li2017diffusion, ref:yu2018stgcn, ref:wu2019graphwavenet, ref:derrowpinion2021eta}. Recent surveys and self-supervised road-network embeddings synthesize and extend these ideas toward task-agnostic representations for large ITS systems~\cite{ref:rahmani2023graph, ref:wang2020rlrn, ref:chang2023spatial}.

Bilevel optimization (or multilevel programming) models hierarchical leader--follower decisions~\cite{ref:bracken1973mathematical, ref:bard1998practical}. Foundational results establish optimality conditions and single-level reformulations, often via KKT/complementarity, and surveys synthesize canonical problem classes and algorithms~\cite{ref:dempe2002foundations, ref:fortuny1981representation, ref:colson2005bilevel}. Recent work covers mixed-integer methods for discrete bilevel models~\cite{ref:kleinert2021survey} and first-order approaches with convergence guarantees for modern bilevel learning and related nonsmooth nonconvex settings~\cite{ref:ji2021convergence, ref:lu2025tsp, ref:cao2025complexity, ref:cutkosky2023optimal}.

Conservative set-valued fields provide a nonsmooth calculus that aligns backpropagation with subgradient-based stationarity and, together with tame/definable geometry, yield convergence tools for learning objectives~\cite{ref:bolte2021conservative, ref:ioffe2009invitation}. Under these assumptions, stochastic (sub)gradient methods with diminishing stepsizes have stationary limit points, whereas constant-stepsize SGD concentrates near the critical set~\cite{ref:davis2020stochastic, ref:bianchi2022convergence}. Recent analyses extend similar guarantees to adaptive schemes in the Adam family~\cite{ref:kingma2015adam, ref:reddi2018adam, ref:xiao2024adam}. The same framework also supports nonsmooth implicit differentiation in implicit layers and hyperparameter models~\cite{ref:bolte2021nonsmooth}.

Robust optimization (RO) seeks decisions that remain feasible for all realizations in a prescribed uncertainty set~\cite{ref:ben2009robust, ref:bertsimas2011theory}. Budgeted polyhedral sets mitigate conservatism by tuning a budget parameter $\Gamma$~\cite{ref:bertsimas2004price}. Data-driven RO/DRO learns uncertainty or ambiguity sets from historical data, leading to tractable moment- and Wasserstein-based formulations~\cite{ref:delage2010distributionally, ref:mohajerin2018data, ref:bertsimas2018data}. Contextual variants condition protection on covariates, and learning-based approaches exploit richer set structure~\cite{ref:kannan2024residuals, ref:bandi2022cro, ref:goerigk2023dnn}. For a recent overview of DRO, see~\cite{ref:kuhn2025dro}.

Difference-of-convex (DC) programming provides a standard framework for expressing a nonconvex objective as the difference of two convex functions~\cite{ref:tao1997convex, ref:horst1999dc, ref:lethi2018thirty}. 
The DC algorithm (DCA), closely related to the convex--concave procedure, iteratively linearizes the concave component and solves the resulting convex surrogate subproblem~\cite{ref:tao1998dc, ref:lipp2016ccp}. 
Recent work establishes nonasymptotic convergence-rate guarantees, proposes boosted line-search variants, and develops methods targeting stronger stationarity notions for nonsmooth DC programs~\cite{ref:abbaszadehpeivasti2024rate, ref:aragon2018accelerating, ref:pang2017bstationary}.

\section{An Optimism-Based Framework for Dual Dispatch} \label{sec:ideal-framework}
A dual dispatch policy needs to make two decisions upon receiving an emergency call. First, the dispatcher decides the primary path for the first ambulance from a depot to the incident location. Second, conditional on this primary dispatch, the dispatcher evaluates whether one marginal backup ambulance is worth activating and, if so, plans its path. We propose IDEAL (Intelligent Dual dispatch of Emergency AmbuLances), a framework that assesses the largest plausible reduction in response time from dispatching the second ambulance and compares this gain against a clinically meaningful time threshold. The verbal form of IDEAL is summarized below.

\noindent\fbox{\begin{minipage}{\linewidth}
\underline{\textbf{IDEAL policy, verbal form.}}

\textbf{Inputs:} a primary path for the first ambulance, current contextual information, trained prediction models, and a time threshold.

\begin{enumerate}[leftmargin=7mm]
\item \textbf{Scenario set construction.} Use the trained models to construct a scenario set of plausible edge-travel-time vectors based on the current contextual information.

\item \textbf{Optimistic gap computation.} Over this scenario set, find the secondary path with the largest plausible arrival-time advantage over the primary path and compute the corresponding optimistic time gap.

\item \textbf{Decision.} Compare the optimistic time gap with the time threshold. If the gap exceeds the threshold, dispatch a second ambulance on the corresponding path. Otherwise, dispatch only the primary ambulance.
\end{enumerate}
\end{minipage}
}

This section will unpack the above verbal form into a quantitative model: Section~\ref{sec:optimistic-time-gap} describes the optimistic time gap, and Section~\ref{sec:ideal-policy} provides the algorithmic form of IDEAL.

\subsection{Optimistic Time Gap and Optimistic Secondary Path} \label{sec:optimistic-time-gap}

To simplify the exposition, we use the specific example described in Figure~\ref{fig:illustration}: We model the road network as a directed graph $\mathcal G = (\mathcal V,\mathcal E)$, and two depots $\mathrm{MOR}$ and $\mathrm{MOU}$ are nodes in $\mathcal G$. We collect the call-time information in a context vector $\mathcal T$, which captures factors such as weather, date, and time of day that affect traffic conditions at call time. IDEAL uses two trained modules at decision time: a representation-learning network that produces the nominal edge-travel-time vector $\hat c_{\mathcal T}$, and a radius prediction network that produces the uncertainty radius $\rho_{\mathcal T}$. Together, these outputs define the scenario set $\mathcal C(\mathcal T)$ around $\hat c_{\mathcal T}$.

Let $\mathcal Z$ denote the set of all feasible paths from either depot (MOR or MOU) to the incident demand node $D \in \mathcal V$:
\[
\mathcal Z \Let \left\{
    z \in \{0, 1\}^{|\mathcal E|}: 
    \begin{array}{l}
    \forall v \in \mathcal V: \\[0.5ex]
    \displaystyle
    \sum_{e \in \mathcal E: o(e) = v} z[e] - \sum_{e \in \mathcal E: d(e) = v} z[e]  \begin{cases}
        \le 1 & \text{if } v\in\{\mathrm{MOR},\mathrm{MOU}\}, \\
        = -1 & \text{if } v = D, \\
        = 0 & \text{otherwise.}
    \end{cases}
    \end{array}
\right\},
\]
where $o(e)$ and $d(e)$ denote the origin and destination nodes for each directed edge $e \in \mathcal E$. 
Any $z \in \mathcal Z$ describes a route from a depot to $D$. Since all edge travel times used below are nonnegative, redundant directed cycles in a path can be removed without increasing cost, so every relevant minimizer admits a cycle-free path representative. We therefore represent the primary path by its binary edge-incidence vector $z_1 \in \mathcal Z$.
For systems with more than two candidate depots, IDEAL can be implemented through pairwise comparison over pairs of depots. Each comparison uses the same two-depot formulation, and the policy selects the secondary path with the largest positive surplus over the threshold.

The \textit{optimistic time gap} quantifies the largest plausible arrival-time advantage of a secondary path over the primary path:
\begin{equation}
\label{eq:delta-rho}
\delta(\mathcal T; z_1) \Let
\max_{c\in\mathcal C(\mathcal T)}
\Bigl( c^\top z_1  -  \underbrace{\min_{z \in\mathcal Z} c^\top z}_{\text{fastest arrival under scenario } c} \Bigr).
\end{equation}
For an edge-travel-time scenario $c$, the inner minimum returns the shortest travel time from either depot to $D$. The difference $c^\top z_1 - \min_{z\in\mathcal Z} c^\top z$ is the time advantage of the fastest path over the primary path in that scenario. The outer maximization selects the scenario in $\mathcal C(\mathcal T)$ that maximizes this advantage.

Next, we determine the path for the second ambulance. When $\mathcal C(\mathcal T)$ is nonempty and compact, the maximization problem in~\eqref{eq:delta-rho} admits at least one maximizer, and we denote the maximizing scenario by $c\opt$ and define the associated optimistic secondary path $z_2$ as the shortest path under $c\opt$:
\[
c\opt \in \argmax_{c\in\mathcal C(\mathcal T)}
\bigl(c^\top z_1 - \min_{z\in\mathcal Z} c^\top z\bigr), \qquad z_2 \in \argmin_{z\in\mathcal Z} (c\opt)^\top z.
\]

\subsection{IDEAL Policy: Algorithmic Form and Optimality} \label{sec:ideal-policy}
IDEAL uses a nonnegative threshold $\mathrm{thr}(\mathcal T)$, measured in seconds, as a capacity shadow price. Operationally, this threshold is the response-time reduction required to justify making one additional ambulance temporarily unavailable for other calls. The underlying operational burden captures factors such as coverage loss, fleet availability, and dispatch volume. To connect this operational threshold to clinical risk, the dispatch center may approximate the increase in clinical risk per additional second of travel based on the nominal travel time of the primary ambulance. Dividing the operational burden of a second ambulance by this risk sensitivity yields the time-saving threshold at which the operational cost and clinical benefit balance. Thus, time-sensitive contexts lead to a lower threshold, whereas tighter capacity conditions lead to a higher threshold. Appendix~\ref{ec:calibration} formalizes this calibration through a first-order approximation and an operational cost parameter.

IDEAL dispatches a second ambulance when the optimistic time gap exceeds the threshold:
\begin{equation}
\label{eq:decision-rule}
\text{dispatch a second ambulance along } z_2
\quad\Longleftrightarrow\quad
\delta(\mathcal T; z_1) > \mathrm{thr}(\mathcal T).
\end{equation} 
We now state the algorithmic form of the IDEAL policy with these inputs.

\noindent\fbox{\begin{minipage}{\linewidth}
\underline{\textbf{IDEAL policy, algorithmic form}}

\textbf{Inputs:} primary path $z_1 \in \mathcal Z$; current contextual information $\mathcal T$; trained models $\phi$ and $\varrho$; time threshold $\mathrm{thr}(\mathcal T)$.

\begin{enumerate}[leftmargin=7mm]
\item \textbf{Scenario set construction.}
Use $\phi$ and $\varrho$ with the contextual information $\mathcal T$ to obtain the nominal edge-travel-time vector $\hat c_{\mathcal T}$ and the radius $\rho_{\mathcal T}$. Form the scenario set $\mathcal C(\mathcal T)$.

\item \textbf{Optimistic gap computation.} Compute
\[
c\opt \in \argmax_{c \in \mathcal C(\mathcal T)} \Bigl( c^\top z_1 - \min_{z\in\mathcal Z} c^\top z \Bigr),
\qquad
\delta(\mathcal T; z_1) \Let (c\opt)^\top z_1 - \min_{z\in\mathcal Z} (c\opt)^\top z,
\]
and choose $z_2 \in \argmin_{z\in\mathcal Z} (c\opt)^\top z$.

\item \textbf{Decision.}
If $\delta(\mathcal T; z_1) > \mathrm{thr}(\mathcal T)$, dispatch a second ambulance along $z_2$; otherwise, dispatch only the primary ambulance.
\end{enumerate}

\textbf{Outputs:} either no second ambulance is dispatched, or the policy selects a secondary path $z_2$.
\end{minipage}}

For a fixed context $\mathcal T$, a primary path $z_1 \in \mathcal Z$, and any secondary path $z \in \mathcal Z$, define the path-specific optimistic time gap $\Delta(z;\mathcal T,z_1)$ by
\[
\Delta(z;\mathcal T,z_1) \Let \max_{c\in\mathcal C(\mathcal T)} \bigl(c^\top z_1 - c^\top z\bigr).
\]
Theorem~\ref{thm:ideal_optimality} shows that the algorithm solves a surplus-maximization problem and achieves the optimistic time gap $\delta(\mathcal T; z_1)$.

\begin{theorem}[Optimality of the IDEAL policy]
\label{thm:ideal_optimality}
Fix a context $\mathcal T$ and a primary path $z_1 \in \mathcal Z$. Consider the optimization problem
\begin{equation}
\label{eq:pthr}
\tag{$\mathcal P_{\text{thr}}$}
\max_{\tau\in\{0,1\},\,z\in\mathcal Z} \Bigl\{ \tau\bigl( \Delta(z;\mathcal T,z_1) - \mathrm{thr}(\mathcal T) \bigr) \Bigr\}.
\end{equation}
When $\mathcal C(\mathcal T)$ is nonempty and compact, the IDEAL policy produces a pair $(\tau,z_2)$ that solves~\eqref{eq:pthr}, and the following two statements hold:
\begin{enumerate}[leftmargin=7mm]
\item The scenario $c\opt$ and secondary path $z_2$ constructed in the algorithm achieve the optimistic time gap $\Delta(z_2;\mathcal T,z_1) = \delta(\mathcal T; z_1)$.
\item The policy dispatches a second ambulance if and only if every optimal solution of~\eqref{eq:pthr} has $\tau=1$, which is equivalent to condition~\eqref{eq:decision-rule}.
\end{enumerate}
\end{theorem}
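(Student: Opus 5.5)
The plan is to reduce everything to one identity: the path-specific gaps, maximized over $z$, recover the optimistic time gap,
\[
\max_{z\in\mathcal Z}\Delta(z;\mathcal T,z_1)=\delta(\mathcal T;z_1),
\]
and this maximum is attained at the path $z_2$ built by the algorithm. The starting observation is that $\mathcal Z$ is finite, since it is a subset of $\{0,1\}^{|\mathcal E|}$. It follows that $\min_{z\in\mathcal Z}c^\top z$ is finite for every $c$. Moreover, each $\Delta(z;\mathcal T,z_1)$ is the maximum of a continuous linear function over the nonempty compact set $\mathcal C(\mathcal T)$, so it is attained. Compactness also guarantees that the maximizer $c\opt$ used by the algorithm exists.

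I will then prove the identity by swapping two maxima. Since $-\min_z c^\top z=\max_z(-c^\top z)$, we have
\[
\delta(\mathcal T;z_1)=\max_{c\in\mathcal C(\mathcal T)}\max_{z\in\mathcal Z}(c^\top z_1-c^\top z)=\max_{z\in\mathcal Z}\max_{c\in\mathcal C(\mathcal T)}(c^\top z_1-c^\top z)=\max_{z\in\mathcal Z}\Delta(z;\mathcal T,z_1).
\]
Two maxima can always be interchanged, so this step is unconditional. For statement (1), I will sandwich $\Delta(z_2;\mathcal T,z_1)$. The upper bound $\Delta(z_2)\le\max_z\Delta(z)=\delta$ comes from the identity. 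For the lower bound, I plug the particular scenario $c\opt$ into the definition of $\Delta(z_2)$:
\[
\Delta(z_2)\ge (c\opt)^\top z_1-(c\opt)^\top z_2=(c\opt)^\top z_1-\min_z (c\opt)^\top z=\delta.
\]
This equality holds because $z_2$ is a shortest path under $c\opt$ and $c\opt$ attains $\delta$. Together the two bounds give $\Delta(z_2)=\delta$.

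The optimality claim for~\eqref{eq:pthr} then follows by splitting on $\tau$. If $\tau=0$, the objective equals $0$. If $\tau=1$, the best objective is $\max_z\Delta(z)-\mathrm{thr}=\delta-\mathrm{thr}$, and by statement (1) it is attained at $z_2$. The optimal value of~\eqref{eq:pthr} is therefore $\max\{0,\delta-\mathrm{thr}\}$. The policy outputs $\tau=\mathbb 1\{\delta>\mathrm{thr}\}$ together with $z_2$, and in either case this pair attains the optimal value. For the $\tau=0$ output, $z$ is arbitrary; I will take $z=z_2$ so that the pair is well defined.

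For statement (2), I will check three cases against condition~\eqref{eq:decision-rule}.
\begin{itemize}
\item If $\delta>\mathrm{thr}$, every $\tau=0$ solution has value $0<\delta-\mathrm{thr}$, which is the optimum. So every optimal solution has $\tau=1$.
\item If $\delta<\mathrm{thr}$, every $\tau=1$ solution has value at most $\delta-\mathrm{thr}<0$. So every optimal solution has $\tau=0$.
\item If $\delta=\mathrm{thr}$, both $(0,z)$ and $(1,z_2)$ achieve the optimal value $0$. So not every optimal solution has $\tau=1$, which matches the policy's choice not to dispatch on a tie.
\end{itemize}
Hence ``every optimal solution has $\tau=1$'' holds exactly when $\delta>\mathrm{thr}$, which is exactly when the policy dispatches.

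The only delicate step is the tie case $\delta=\mathrm{thr}$, where the phrase ``every optimal solution has $\tau=1$'' matters. Beyond that, the main thing to be careful about is attainment, meaning that the maxima exist rather than just suprema. Finiteness of $\mathcal Z$ and compactness of $\mathcal C(\mathcal T)$ settle this; no structure of the Burg scenario set is needed.
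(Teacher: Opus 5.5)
Your proposal is correct and follows essentially the paper's route. Both arguments sandwich $\Delta(z_2)$ between $(c\opt)^\top z_1-(c\opt)^\top z_2=\delta(\mathcal T;z_1)$ and the bound $\Delta(z)\le\delta(\mathcal T;z_1)$, which you get by interchanging the two maxima and the paper by a pointwise inequality before maximizing over $c$. Both then optimize over $\tau$ for fixed $z$ to obtain the value $\max\{0,\delta-\mathrm{thr}(\mathcal T)\}$, and your explicit three-case check, including the tie $\delta=\mathrm{thr}(\mathcal T)$, matches the paper's conclusion that every optimal solution has $\tau=1$ exactly when $\delta>\mathrm{thr}(\mathcal T)$.
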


The next three sections separate the method into a training stage and an inference stage. Sections~\ref{sec:bilevel-representation-learning} and~\ref{sec:metric-perturbation} describe the training stage: learn context-dependent edge representation vectors that produce the nominal edge-travel-time vector $\hat c_{\mathcal T}$, and construct the metric-perturbation scenario set $\mathcal C(\mathcal T)$ by training the radius predictor $\varrho$. Section~\ref{sec:solution-procedure} describes the inference stage: given the call-time context $\mathcal T$ and a primary path $z_1$, compute an estimate of the optimistic time gap and a candidate secondary path efficiently, and dispatch the second ambulance only when this estimate exceeds the threshold.

\section{Travel Time Predictor by Bilevel Representation Learning}
\label{sec:bilevel-representation-learning}
This section describes the first part of the training stage. Using historical dispatch records, we train the representation network $\phi$ so that, for any call-time context $\mathcal T$, it outputs the nominal edge-travel-time vector $\hat c_{\mathcal T}$ used in Step~1 of IDEAL. We first introduce the data and architecture, then formulate the bilevel problem, and finally describe the training algorithm with convergence guarantees.

\subsection{Dataset and Architecture} \label{sec:data}

The dataset records out-of-hospital cardiac arrest (OHCA) cases managed by the Hong Kong Fire Services Department (HKFSD). We keep the road-network notation $\mathcal G=(\mathcal V,\mathcal E)$ from Section~\ref{sec:ideal-framework}. Each directed edge $e\in\mathcal E$ has a static feature vector $\mathcal A[e]\in\R^{d_{\mathcal A}}$ that includes geometric and roadway attributes such as length, speed limit, and number of lanes. The full data-construction pipeline, including data sources, the study region, geocoding, network construction, covariate encoding, travel-time target extraction, and preprocessing rules, is documented in Appendix~\ref{ec:data-details}.

Let $N$ denote the number of samples in the dataset. Each sample $i\in[N]$ contains:
\begin{enumerate}[leftmargin=7mm]
\item A context vector $\mathcal T^{(i)} \in \R^{d_{\mathcal T}}$ that encodes calendar indicators and weather at call time.
\item A destination node $v^{(i)}_{\mathrm{d}} \in \mathcal V$ indicating the location of the demand, and an origin node $v^{(i)}_{\mathrm{o}} \in \mathcal V$ indicating the depot from which the ambulance was dispatched. These nodes are obtained by projecting real-world addresses to the nearest nodes in the traffic graph $\mathcal G$.
\item An observed road travel time $t^{(i)} \in \R_+$ obtained from operational timestamps after adjusting the call-receipt-to-CPR interval for preparation time and vertical-access times.
\end{enumerate}

The tuple $\bigl(\mathcal G, \mathcal A, \{\mathcal T^{(i)}\}_{i=1}^N, \{v^{(i)}_{\mathrm{o}}\}_{i=1}^N, \{v^{(i)}_{\mathrm{d}}\}_{i=1}^N, \{t^{(i)}\}_{i=1}^N\bigr)$ 
serves as the input to our learning module. Figure~\ref{fig:framework-representation} depicts our representation network. The architecture consists of three components:
\begin{enumerate}[leftmargin=7mm]
    \item A static network $\vartheta$ that maps the static edge feature vector to a static edge embedding $\vartheta[e] \in \R^d$ for each $e \in \mathcal E$.
    \item A dynamic network $\theta$ that maps each context to a contextual representation vector
    $\theta(\mathcal T) \in \R^d$.
    \item A cross network $\psi$ that fuses the static edge embeddings and the contextual representation vector into context-dependent edge representation vectors
    \begin{equation}
    \phi(\mathcal T)[e] \Let \psi\bigl(\vartheta[e], \theta(\mathcal T)\bigr) \in \R^d \quad \forall e \in \mathcal E.
    \label{eq:phi-def}
    \end{equation}
\end{enumerate}
All three representations $\vartheta[e]$, $\theta(\mathcal T)$, and $\phi(\mathcal T)[e]$ share the common dimension $d$.
\begin{figure}[ht]
    \centering
    \includegraphics[width=\linewidth]{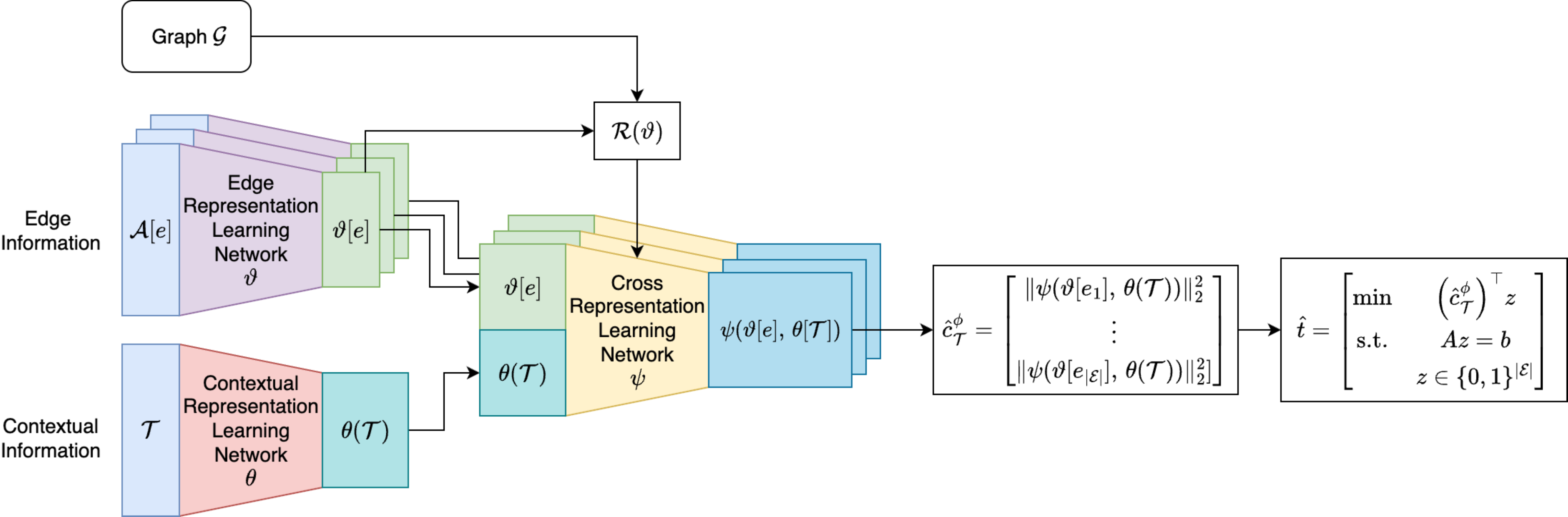}
    \caption{Representation-learning architecture for edge travel time prediction. The static \textit{edge representation learning network} embeds edge information. The dynamic \textit{contextual representation learning network} embeds contextual information. The \textit{cross representation learning network} fuses both embeddings to produce context-dependent edge representation vectors and edge travel time estimates. A shortest-path problem aggregates edge travel times to match the observed travel times.}
    \label{fig:framework-representation}
\end{figure}

We convert the representation $\phi(\mathcal T)[e]$ into an edge travel time estimate:
\begin{equation}
\hat c_{\mathcal T}^\phi[e] \Let \bigl\|\phi(\mathcal T)[e]\bigr\|_2^2 \quad \forall e \in \mathcal E,
\label{eq:c-map}
\end{equation}
where the squared Euclidean norm guarantees nonnegative edge travel times.
Collecting all edges yields the predicted cost vector
\begin{equation} \label{eq:c-def}
\R_+^{|\mathcal E|} \ni \hat c_{\mathcal T}^\phi 
= \begin{pmatrix}
    \hat c_{\mathcal T}^\phi[e_1] \\
    \vdots \\
    \hat c_{\mathcal T}^\phi[e_{|\mathcal E|}] 
\end{pmatrix} 
\stackrel{\text{by}~\eqref{eq:c-map}}{=} 
\begin{pmatrix}
    \| \phi(\mathcal T)[e_1] \|_2^2\\
    \vdots \\
    \| \phi(\mathcal T)[e_{|\mathcal E|}] \|_2^2
\end{pmatrix}
\stackrel{\text{by}~\eqref{eq:phi-def}}{=}
\begin{pmatrix}
    \| \psi(\vartheta[e_1],  \theta (\mathcal T) ) \|_2^2 \\
    \vdots \\
    \| \psi(\vartheta[e_{|\mathcal E|}],  \theta (\mathcal T) ) \|_2^2
\end{pmatrix}.
\end{equation}
We interpret $\hat c_{\mathcal T}^\phi$ as the model's nominal travel-time estimate under context $\mathcal T$.

\subsection{Bilevel Learning with Unobserved Paths}
\label{sec:bilevel}

We learn the representation network weights $\phi$ using a weakly supervised learning technique~\cite{ref:zhou2018brief}. Consider the $i$-th sample from the training data. We pass its context $\mathcal T^{(i)}$ through the representation network $\phi$ to obtain the point estimate $\hat c_{\mathcal T^{(i)}}^\phi \in \R_+^{|\mathcal E|}$ for the edge travel times. The shortest-path travel time from the originating depot $v^{(i)}_{\mathrm{o}}$ to the incident destination $v^{(i)}_{\mathrm{d}}$ is obtained by solving the shortest-path problem:
\begin{equation}
g^{(i)}(\phi) \Let \min_{\substack{z \in \{0,1\}^{|\mathcal E|},\, A z = b^{(i)}}}
\bigl(\hat c_{\mathcal T^{(i)}}^\phi\bigr)^\top z,
\label{eq:gi-def}
\end{equation}
where  $A \in \R^{|\mathcal V| \times |\mathcal E|}$ is the static node-arc incidence matrix
\[
\forall (v,e)\in\mathcal V\times\mathcal E:\quad 
A_{v,e} =
\begin{cases}
  1 & \text{if edge } e \text{ leaves node } v,\\
 -1 & \text{if edge } e \text{ enters node } v,\\
  0 & \text{otherwise,}
\end{cases}
\]
and $b^{(i)} \in \R^{|\mathcal V|}$ encodes the origin and destination of sample $i$:
\[
\forall v \in \mathcal V:\quad
b_v^{(i)} =
\begin{cases}
  1 & \text{if } v = v^{(i)}_{\mathrm{o}},\\
 -1 & \text{if } v = v^{(i)}_{\mathrm{d}},\\
  0 & \text{otherwise.}
\end{cases}
\]
We fit $\phi$ by minimizing the discrepancy between the observed travel time $t^{(i)}$ and the point estimate of the shortest-path travel time $g^{(i)}(\phi)$. We penalize the discrepancy with a continuously differentiable loss $\ell$; examples include the squared-error and Huber losses.
Additionally, we introduce a regularization term $\mathcal R(\vartheta)$ that penalizes the differences between the length-normalized edge embeddings of adjacent edges:
\begin{equation}
\mathcal R(\vartheta) \Let \sum_{(e_1,e_2)\in \mathcal E_{\mathrm{adj}}} \left\| \frac{\vartheta[e_1]}{\sqrt{l[e_1]}} - \frac{\vartheta[e_2]}{\sqrt{l[e_2]}} \right\|_2^2,
\label{eq:regularization}
\end{equation}
where $\mathcal E_{\mathrm{adj}} \Let \bigl\{(e,e')\in\mathcal E\times\mathcal E: e\neq e', \; e \text{ and } e' \text{ share an endpoint}\bigr\}$, and $l[e]>0$ denotes the length of edge $e$.
This regularization enforces spatial smoothness in the context-invariant part of the representation, reflecting local geometric similarity. The normalization by $\sqrt{l[e]}$ scales the penalty per unit length.
Because $\mathcal R$ regularizes only the edge embeddings, it depends only on $\vartheta$.

With this regularizer, the empirical learning problem is formally the bilevel program
\begin{equation}
\label{eq:bilevel-learning-problem}
\begin{aligned}
\min_{\phi} \quad
& \frac{1}{N}\sum_{i\in[N]} \ell\bigl((\hat c_{\mathcal T^{(i)}}^\phi)^\top z^{(i)}, t^{(i)}\bigr) + \beta \mathcal R(\vartheta) \\
\text{s.t.}\quad
& z^{(i)} \in \argmin_{\substack{z \in \{0,1\}^{|\mathcal E|},\, A z = b^{(i)}}} (\hat c_{\mathcal T^{(i)}}^\phi)^\top z, \quad i\in[N].
\end{aligned}
\end{equation}
Here $\phi$ is the upper-level learning variable, while each $z^{(i)}$ is a lower-level optimal solution based on the current value of $\phi$. Using the lower-level optimal value $g^{(i)}(\phi)$, the upper-level objective function of~\eqref{eq:bilevel-learning-problem} is equivalently written as
\begin{equation}
J(\phi) \Let \frac{1}{N} \sum_{i \in [N]} \ell\bigl(g^{(i)}(\phi), t^{(i)}\bigr) + \beta \mathcal R(\vartheta).
\label{eq:compact}
\end{equation}
The empirical objective~\eqref{eq:compact} can be interpreted through the lens of inverse optimization. The forward problem is a context-dependent shortest-path model: given the call-time context, it induces edge travel times and returns the shortest-path travel time from the dispatched depot to the incident. The inverse problem then adjusts the representation parameters so that these forward predictions match historical travel-time observations. This connects our formulation to data-driven inverse optimization~\cite{ref:mohajerin2018inverse, ref:chan2025inverse} and to routing-specific inverse-optimization models~\cite{ref:zattoni2025inverse}. Unlike classical inverse shortest-path models that observe the chosen path~\cite{ref:burton1992instance}, our data only reveal the trip-level travel time, similar to inverse shortest-path-length and objective-value settings~\cite{ref:cui2010complexity}.

\subsection{First-Order Training via Mini-Batch Conservative Gradients} \label{sec:conservative-training}

The shortest-path lower-level problem in~\eqref{eq:gi-def} makes $J(\phi)$ nonsmooth: when two or more paths tie in the lower-level problem~\eqref{eq:gi-def}, $g^{(i)}(\phi)$ is not differentiable, so ordinary backpropagation does not produce a unique gradient. 
We use the conservative-field framework of~\cite{ref:bolte2021conservative} to handle this nonsmoothness; the formal definitions used in the convergence analysis are detailed in Appendix~\ref{ec:training-convergence-proof}. For each $i \in [N]$, define the per-sample objective
\begin{equation}
J^{(i)}(\phi)
\Let
\ell\bigl(g^{(i)}(\phi), t^{(i)}\bigr) + \beta \mathcal R(\vartheta),
\label{eq:Ji-def}
\end{equation}
so that $J$ is the average of the per-sample objectives: $J(\phi) = \frac{1}{N} \sum_{i \in [N]} J^{(i)}(\phi)$, which recovers exactly the objective in~\eqref{eq:compact}.
When each $J^{(i)}$ admits a conservative field $D^{(i)}$, we define the aggregate field
\[
D_J(\phi) \Let \frac{1}{N}\operatorname{conv}\bigl(\sum_{i\in[N]} D^{(i)}(\phi)\bigr),
\]
where $\sum$ denotes the Minkowski sum of sets. By the calculus rules for conservative fields~\cite[Section~6.1]{ref:bolte2021conservative}, $D_J$ is a conservative field for $J$.

We now introduce Algorithm~\ref{alg:conservative-sgd}, a mini-batch conservative gradient descent scheme that solves~\eqref{eq:compact} using conservative-field elements.

\begin{algorithm}
\caption{Mini-batch conservative gradient descent for \eqref{eq:compact}}
\begin{algorithmic}[1]
\Require Initial parameter $\phi_0\in\R^p$, deterministic step sizes $\{\alpha_k\}_{k\in\mathbb N}$, mini-batches $\{B_k\}_{k\in\mathbb N}$, perturbation radii $\{r_k\}_{k\in\mathbb N}$ with $r_k\ge 0$ for all $k$, and a fixed total order $\prec$ on feasible paths.
\For{$k = 0, 1, 2, \dots$}
    \State Draw $u_k\sim \mathrm{Unif}(\{u\in\R^p:\|u\|_2\le 1\})$ and set $\tilde\phi_k \leftarrow \phi_k + r_k u_k$.
    \For{each $i \in B_k$}
        \State Compute $\hat c_{\mathcal T^{(i)}}^{\tilde\phi_k}$ as in \eqref{eq:c-map}.
        \State Compute
        $z_k^{(i)} \in \argmin\{ (\hat c_{\mathcal T^{(i)}}^{\tilde\phi_k})^\top z : z\in\{0,1\}^{|\mathcal E|},\, A z=b^{(i)}\}$ and break ties by $\prec$.
        \State Set $d_k^{(i)} \leftarrow \nabla_1 \ell\bigl((\hat c_{\mathcal T^{(i)}}^{\tilde\phi_k})^\top z_k^{(i)}, t^{(i)}\bigr) \bigl(\nabla_\phi \hat c_{\mathcal T^{(i)}}^{\tilde\phi_k}\bigr)^\top z_k^{(i)}
        + \beta \nabla_\phi \mathcal R(\tilde\vartheta_k)$.
    \EndFor
    \State Set $d_k \leftarrow |B_k|^{-1} \sum_{i \in B_k} d_k^{(i)}$ and $\phi_{k+1} \leftarrow \phi_k - \alpha_k d_k$.
\EndFor
\Ensure Sequence $\{\phi_k\}_{k\in\mathbb N}$.
\end{algorithmic}
\label{alg:conservative-sgd}
\end{algorithm}

Algorithm~\ref{alg:conservative-sgd} follows stochastic gradient descent. Each iteration samples a mini-batch, solves a shortest-path problem for each sample, backpropagates through the selected path, averages the resulting directions, and updates $\phi$. The order $\prec$ makes the shortest-path oracle single-valued when ties occur. In line 6 of Algorithm~\ref{alg:conservative-sgd}, $\nabla_1\ell$ is the derivative with respect to the first argument, $\tilde\vartheta_k$ is the $\vartheta$-component of $\tilde\phi_k$, and $\nabla_\phi\mathcal R(\tilde\vartheta_k)$ denotes $(\nabla_\vartheta\mathcal R(\tilde\vartheta_k),0,0)$ since $\mathcal R$ depends only on $\vartheta$. Appendix~\ref{ec:training-convergence-proof} constructs conservative fields for $J^{(i)}$ and shows that the directions computed above are valid field elements. 
The nonnegative radius schedule $\{r_k\}$ enables Algorithm~\ref{alg:conservative-sgd} to cover two regimes, the unperturbed recursion ($r_k\equiv0$) and the perturbed recursion ($r_k > 0$), with different guarantees. The former yields the asymptotic accumulation-set guarantee in Theorem~\ref{thm:convergence}, while the latter supports the finite-time $(\delta,\varepsilon)$-stationarity guarantee in Appendix~\ref{ec:finite-time-proof}.

\subsection{Convergence Analysis} \label{sec:training-convergence}

We first state the assumptions and the asymptotic accumulation-set guarantee for the unperturbed recursion with $r_k=0$ for all $k$ in Algorithm~\ref{alg:conservative-sgd}.

\begin{assumption}[Smooth loss and bounded activation Jacobians]
\label{assmpt:smoothness-bounded}
For any compact set $U\subset\R^p$, there exists a constant $L>0$ (depending on $U$) such that the following conditions hold:
\begin{enumerate}[label=(\alph*),leftmargin=7mm]
\item \textbf{Loss smoothness.} For each fixed $t \in \R$, the function $x \mapsto \ell(x, t)$ is continuously differentiable.
\item \textbf{Activation smoothness.} All activation functions used in $\vartheta$, $\theta$, and $\psi$ are continuously differentiable in a neighborhood of $U$.
\item \textbf{Bounded activation Jacobians.} For all $\phi \in U$, $i \in [N]$, and $e \in \mathcal E$, $\|\nabla_\phi(\phi(\mathcal T^{(i)})[e])\|_2 \le L$.
\end{enumerate}
\end{assumption}

\begin{assumption}[Definable architecture]
\label{assmpt:definable}
All primitive building blocks are definable in a fixed o-minimal structure, for example $\mathbb R_{\mathrm{an},\exp}$: the subnetworks $\vartheta$, $\theta$, and $\psi$ (and hence $\phi$), the loss $x \mapsto \ell(x, t)$ for each $t$, and the regularizer $\mathcal R$.
\end{assumption}

Assumptions~\ref{assmpt:smoothness-bounded}--\ref{assmpt:definable} hold for common modeling choices. Squared-error and Huber losses satisfy Assumption~\ref{assmpt:smoothness-bounded}(a). Smooth activations such as sigmoid, $\tanh$, SoftPlus~\cite{ref:dugas2000incorporating}, and Swish~\cite{ref:ramachandran2017searching} satisfy Assumption~\ref{assmpt:smoothness-bounded}(b) and Assumption~\ref{assmpt:definable}. In our implementation, weight decay and early stopping are used as practical controls on the parameter magnitude, which supports applying Assumption~\ref{assmpt:smoothness-bounded}(c) locally on the bounded-iterates event used in Theorem~\ref{thm:convergence}.

We now present the asymptotic accumulation-set result for Algorithm~\ref{alg:conservative-sgd} when $r_k=0$ for all $k$. Let $\mathbb P$ denote the probability measure induced by all random draws in Algorithm~\ref{alg:conservative-sgd}. Define the bounded-iterates event $\mathcal B \Let \{\sup_{k\in\mathbb N}\|\phi_k\|_2<+\infty \}$. We assume $\mathbb{P}(\mathcal B)>0$ and interpret the convergence result below as holding almost surely on $\mathcal B$.

\begin{theorem}[Asymptotic accumulation-set guarantee]
\label{thm:convergence}
Under Assumptions~\ref{assmpt:smoothness-bounded}--\ref{assmpt:definable}, consider Algorithm~\ref{alg:conservative-sgd} with perturbation radii $r_k=0$ for all $k$ and any fixed deterministic tie-breaking order $\prec$. Assume that the mini-batches $\{B_k\}_{k\in\mathbb N}$ are drawn independently and uniformly at random and that the step sizes $\{\alpha_k\}_{k\in\mathbb N}\subset(0, +\infty)$ are deterministic and satisfy $\sum_{k=0}^{+\infty}\alpha_k = +\infty$ and $\lim_{k\to+\infty}\alpha_k\log(k+2)=0$.
Then, almost surely on $\mathcal B$, the sequence $\{\phi_k\}_{k\in\mathbb N}$ generated by Algorithm~\ref{alg:conservative-sgd} has a nonempty accumulation set; every accumulation point $\bar\phi$ satisfies $0\in D_J(\bar\phi)$; and the objective $J$ is constant on the accumulation set.
\end{theorem}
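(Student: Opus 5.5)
The plan is to cast Algorithm~\ref{alg:conservative-sgd} with $r_k\equiv0$ as a stochastic approximation of the differential inclusion $\dot\phi\in -D_J(\phi)$. We then invoke the Davis--Drusvyatskiy--Kakade--Lee / Bolte--Pauwels machinery for definable objectives with conservative fields, following \cite{ref:davis2020stochastic, ref:bolte2021conservative}.

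The first step is to build, for each $i$, a conservative field $D^{(i)}$ for $J^{(i)}$ that contains the computed direction $d_k^{(i)}$. The lower-level value $g^{(i)}(\phi)=\min_{z\in\mathcal Z^{(i)}} (\hat c^\phi_{\mathcal T^{(i)}})^\top z$ is a minimum over the finitely many cycle-free $v_{\mathrm o}^{(i)}$--$v_{\mathrm d}^{(i)}$ paths. Each candidate $\phi\mapsto (\hat c^\phi)^\top z$ is $C^1$ by Assumption~\ref{assmpt:smoothness-bounded}(b), being a composition of smooth networks with $\|\cdot\|_2^2$, and definable by Assumption~\ref{assmpt:definable}. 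We define
\[
D_g^{(i)}(\phi)=\operatorname{conv}\{(\nabla_\phi \hat c^\phi)^\top z: z \text{ active at }\phi\}.
\]
This is the Clarke subdifferential of a finite min of $C^1$ definable functions, hence a conservative field. The chain rule for conservative fields composed with the $C^1$ loss $\ell(\cdot,t)$, plus the smooth term $\beta\nabla\mathcal R$, then gives
\[
D^{(i)}=\nabla_1\ell(g^{(i)},t^{(i)})\,D_g^{(i)}+\beta\nabla\mathcal R.
\]
The tie-broken path $z_k^{(i)}$ is always active, so $d_k^{(i)}\in D^{(i)}(\phi_k)$. The field $D^{(i)}$ is definable, has closed graph, and is locally bounded; local boundedness comes from Assumption~\ref{assmpt:smoothness-bounded}(c) together with the finiteness of paths. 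By the sum rule, $D_J$ is conservative for $J$.

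Next I would write the update as
\[
\phi_{k+1}=\phi_k-\alpha_k(y_k+\xi_k), \qquad y_k\in D_J(\phi_k),
\]
with $\xi_k=d_k-\mathbb E[d_k\mid\mathcal F_k]$. For this to work, $\mathbb E[d_k\mid\mathcal F_k]=\frac1N\sum_i d_k^{(i)}$ must lie in $D_J(\phi_k)$. It does, because uniform independent mini-batches give an unbiased average of the deterministic per-sample selections, and that average lies in $\frac1N\sum_i D^{(i)}(\phi_k)$. The noise $\xi_k$ is a martingale difference. On $\mathcal B$ the iterates stay in a compact set $U$, where $\|d_k\|$ is uniformly bounded, so $\xi_k$ is bounded. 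Bounded martingale differences together with $\alpha_k\log(k+2)\to0$ give the Benaïm--Hofbauer--Sorin asymptotic-pseudotrajectory noise condition, via an Azuma-type bound summed over windows. This is exactly the condition used in \cite{ref:bianchi2022convergence, ref:davis2020stochastic}. Combined with $\sum\alpha_k=\infty$, it shows that the interpolated process is almost surely an asymptotic pseudotrajectory of $\dot\phi\in-D_J(\phi)$ on $\mathcal B$.

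Finally, definability implies that $J$ is a Lyapunov function for this inclusion with respect to the critical set $\{0\in D_J\}$. Along absolutely continuous solutions, $\frac{d}{dt}J(\phi(t))=-\|\dot\phi\|^2$, and the decrease is strict away from that set. By the definable Morse--Sard theorem for conservative fields \cite{ref:bolte2021conservative}, $J(\{0\in D_J\})$ is finite, hence has empty interior. Benaïm--Hofbauer--Sorin then yields the three conclusions: the limit set is nonempty, since the iterates are bounded; it is contained in the critical set; and $J$ is constant on it. The main obstacle is the noise step. The subtle part is justifying that the conditional mean of the tie-broken direction lies in $D_J(\phi_k)$ even at tie points, where the selection is discontinuous. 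I would handle this by working with set-valued membership at each fixed $\phi_k$ and never requiring continuity of the selection. A secondary care point is using the $\log$-stepsize condition instead of square-summability.
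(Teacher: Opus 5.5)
Your proposal is correct and shares the paper's architecture. It uses the same per-sample field (loss derivative times the convex hull of active-path gradients, plus $\beta\nabla_\phi\mathcal R$) and the same membership of the tie-broken direction in it. It also uses unbiasedness of uniform mini-batching to get $\mathbb E[d_k\mid\mathcal F_k]\in D_J(\phi_k)$, and then the Bolte--Pauwels stochastic-approximation machinery. The routes differ in two places.

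\textbf{Conservativity.} You identify $\operatorname{conv}\{(\nabla_\phi\hat c^{\phi}_{\mathcal T^{(i)}})^\top z: z\ \text{active}\}$ with the Clarke subdifferential of $g^{(i)}$. This is correct: $-g^{(i)}$ is a finite maximum of $C^1$ functions, hence Clarke regular with subdifferential equal to the hull of active gradients. You then appeal to path differentiability and the conservative chain and sum rules. The paper instead checks the chain-rule characterization \cite[Lemma~2]{ref:bolte2021conservative} by hand. It shows that along any absolutely continuous curve $\chi$, every active branch has the same derivative as $g^{(i)}\circ\chi$ wherever the latter is differentiable. Your argument is shorter but imports more nonsmooth analysis. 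The paper's is self-contained and makes explicit why ties between shortest paths are harmless.

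\textbf{Limit behavior.} The paper cites \cite[Theorem~9]{ref:bolte2021conservative} as a black box. You unpack it into three pieces: an asymptotic pseudotrajectory from bounded martingale noise with $\alpha_k=o(1/\log k)$, Lyapunov decrease of $J$ from conservativity, and finiteness of critical values from the definable Morse--Sard theorem. This exposes the mechanism, but it leaves you responsible for details the cited theorem absorbs.
\begin{enumerate}
\item On $\mathcal B$ the bound on the iterates is random. You need to localize on $\{\sup_k\|\phi_k\|_2\le R\}$ and apply the Azuma-type estimate to the truncated noise $\xi_k\mathbbm{1}\{\|\phi_k\|_2\le R\}$; the indicator is $\mathcal F_k$-measurable, so this is still a bounded martingale difference.
\item The linear-growth hypothesis of Bena\"{\i}m--Hofbauer--Sorin is not automatic for $-D_J$. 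You must secure it by modifying the field outside a large ball.
\end{enumerate}

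Lastly, restricting to cycle-free paths is unnecessary, since $\{z\in\{0,1\}^{|\mathcal E|}:Az=b^{(i)}\}$ is already finite. It is also harmless: if the tie-broken $z_k^{(i)}$ contains a zero-cost cycle, those edges contribute nothing to the direction. Indeed, $\nabla_\phi\hat c^{\phi}_{\mathcal T^{(i)}}[e]=2\bigl(\nabla_\phi\phi(\mathcal T^{(i)})[e]\bigr)^\top\phi(\mathcal T^{(i)})[e]=0$ whenever $\phi(\mathcal T^{(i)})[e]=0$.
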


In \cite{ref:bolte2021conservative}, the nonsmoothness of the problem arises from the activation functions of the neural network (e.g., ReLU). In contrast, our problem is nonsmooth even with smooth activation functions because multiple shortest paths can tie in~\eqref{eq:gi-def}. Fortunately, the conservative-field framework still applies: each shortest-path selection yields a valid conservative-field element for the per-sample objective, and Algorithm~\ref{alg:conservative-sgd} averages such elements over the mini-batch.

\begin{remark}[Smoothing the lower-level problem]
\label{rem:smoothing}
To smoothen the value function $\hat g^{(i)}$ for differentiability, we can add a quadratic regularization to the lower-level problem:
\[
\hat g^{(i)}(\phi)
\Let
\min_{\substack{z \in [0,1]^{|\mathcal E|},\, A z = b^{(i)}}}
\bigl(\hat c_{\mathcal T^{(i)}}^\phi\bigr)^\top z
+ \gamma \|z\|_2^2,
\quad \gamma > 0.
\]
This modification makes the lower-level objective strongly convex in $z$, so the lower-level minimizer is unique, and differentiability is satisfied. The main advantage of this alternative is that one can use classical Karush-Kuhn-Tucker-based implicit differentiation and smooth bilevel machinery to solve the resulting problem. However, the lower-level problem requires solving a quadratic program at each iteration. General-purpose quadratic-programming solvers typically scale superlinearly in the number of variables and constraints, and in our setting, this cost is substantially higher than a single run of Dijkstra's algorithm, which runs in $O(|\mathcal E| + |\mathcal V|\log|\mathcal V|)$ time.

Consequently, the unregularized shortest-path formulation~\eqref{eq:bilevel-learning-problem} is preferable for our purpose: it avoids approximation error and permits each training iteration to use shortest-path selection followed by backpropagation, which is computationally cheaper than solving a quadratic program.
\end{remark}

We also develop a finite-time $(\delta,\varepsilon)$-stationarity guarantee for Algorithm~\ref{alg:conservative-sgd} with a deterministic nonincreasing positive perturbation-radius sequence $\{r_k\}_{k\in\mathbb N}$.
The finite-time proof allows any deterministic positive step-size sequence; when the same algorithm is used for asymptotic convergence, one may impose the step-size regime in Theorem~\ref{thm:convergence}.
The stationarity scale is fixed as $\delta \Let 2r_0$, where $r_0$ is the initial perturbation radius.
The finite-time statement and proof are given in Appendix~\ref{ec:finite-time-proof}.

\section{Travel Time Scenario Set from Metric Perturbations} \label{sec:metric-perturbation}
This section connects the learned travel-time predictor from Section~\ref{sec:bilevel-representation-learning} to uncertainty quantification. We first define the family of scenario sets used at inference time and describe how to train the radius prediction network $\varrho$ to predict the uncertainty radius for a new demand.

\subsection{Metric Perturbations and the Scenario Set}\label{sec:metric-perturbations-scenario-set}
The predictor in Section~\ref{sec:bilevel-representation-learning} maps each context $\mathcal T$ to edge embeddings $\phi(\mathcal T)[e] \in \R^d$ for $e \in \mathcal E$. To simplify notation, we omit the network parameter $\phi$ and write
\begin{equation} \label{eq:nominal-metric}
\phi_{\mathcal T}[e] \Let \phi(\mathcal T)[e],
\qquad
\hat c_{\mathcal T}[e] \Let \|\phi_{\mathcal T}[e]\|_2^2 = \phi_{\mathcal T}[e]^\top I\,\phi_{\mathcal T}[e],
\qquad e\in\mathcal E,
\end{equation}
where $I$ denotes the $d\times d$ identity matrix. Equation~\eqref{eq:nominal-metric} interprets $\hat c_{\mathcal T}[e]$ as a quadratic form of the embedding $\phi_{\mathcal T}[e]$ under the identity matrix, which matches standard formulations in metric learning~\cite{ref:bellet2013survey}. We use a positive-definite matrix $X \in \mathbb S_{++}^d$ to define the perturbed edge-travel-time vector
\begin{equation}
c_{\mathcal T}(X)[e] \Let \phi_{\mathcal T}[e]^\top X \phi_{\mathcal T}[e]
\qquad \forall e \in \mathcal E.
\label{eq:cX}
\end{equation}
The nominal travel times $\hat c_{\mathcal T}[e]$ in~\eqref{eq:nominal-metric} correspond to $X = I$. 
The uncertainty is therefore parameterized by a symmetric matrix $X$ in the $d(d+1)/2$-dimensional space of symmetric matrices.

We measure metric perturbations by the Burg matrix divergence from the nominal metric $I$:
\[
D_{\mathrm{Burg}}(X,I)
= \Tr(X) - \log\det(X) - d,
\qquad X \in \mathbb S_{++}^d.
\]
The divergence is nonnegative and equals zero only at $X = I$~\cite{ref:kulis2006learning}. It penalizes both large and nearly singular matrices.
Given a radius $\rho_{\mathcal T} \ge 0$, we define the metric-perturbation set
\[
\mathcal X_{\rho_{\mathcal T}}
\Let
\left\{
X \in \mathbb S_{++}^d:
D_{\mathrm{Burg}}(X,I) \le \rho_{\mathcal T}
\right\},
\]
containing all metrics whose Burg divergence from the nominal Euclidean metric is at most $\rho_{\mathcal T}$. Applying the map $X \mapsto c_{\mathcal T}(X)$ to each element in the metric-perturbation set generates the scenario set for the travel times on each edge.

\begin{definition}[Burg scenario set] \label{def:C-rho}
For each context $\mathcal T$, the Burg scenario set is the edge-travel-time set induced by perturbing the matrix within $\mathcal X_{\rho_{\mathcal T}}$ around the identity matrix $I$:
\begin{equation}
\label{eq:scenario-set}
\mathcal C(\mathcal T)
\Let
\bigl\{
c_{\mathcal T}(X) \in \R_+^{|\mathcal E|} : \exists X \in \mathcal X_{\rho_{\mathcal T}} \text{ such that } c_{\mathcal T}(X)[e] = \phi_{\mathcal T}[e]^\top X \phi_{\mathcal T}[e]
~ \forall e \in \mathcal E
\bigr\}.
\end{equation}
\end{definition}

The radius $\rho_{\mathcal T}$ controls the size of $\mathcal X_{\rho_{\mathcal T}}$, and consequently the size of $\mathcal C(\mathcal T)$. When $\rho_{\mathcal T}=0$, this set collapses to the singleton, point prediction $\{\hat c_{\mathcal T}\}$. 
Operationally, a larger predicted radius indicates a context in which the nominal edge-travel-time prediction is less reliable and admits wider metric perturbations; a smaller predicted radius keeps the scenario set close to the nominal prediction. Because the same matrix $X$ perturbs all edge embeddings jointly via the relationship~\eqref{eq:cX}, scenario variation is coupled across the network: edges with similar contextual representation vectors respond similarly to the same perturbation rather than varying independently edge by edge.
The set $\mathcal X_{\rho_{\mathcal T}}$ is convex because $X\mapsto \Tr(X)$ is linear and $X\mapsto -\log\det(X)$ is convex on $\mathbb S_{++}^d$. The map $X\mapsto c_{\mathcal T}(X)$ is linear, so $\mathcal C(\mathcal T)$ is the linear image of a convex set and is therefore convex.
Proposition~\ref{prop:attainment} summarizes the resulting well-posedness properties of $\mathcal C(\mathcal T)$.

\begin{proposition}[Convexity and compactness of the scenario set]
\label{prop:attainment}
Fix a context $\mathcal T$ and a radius $\rho_{\mathcal T} \ge 0$. The Burg scenario set $\mathcal C(\mathcal T)$ is nonempty, convex, and compact.
\end{proposition}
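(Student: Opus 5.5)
The plan is to prove the three properties for the metric set $\mathcal X_{\rho_{\mathcal T}}$ first and then push them forward through the linear map $X \mapsto c_{\mathcal T}(X)$ defined in~\eqref{eq:cX}. Nonemptiness is immediate: $D_{\mathrm{Burg}}(I,I)=\Tr(I)-\log\det(I)-d=0\le\rho_{\mathcal T}$, so $I\in\mathcal X_{\rho_{\mathcal T}}$ and $\hat c_{\mathcal T}=c_{\mathcal T}(I)\in\mathcal C(\mathcal T)$. Every $X\in\mathbb S^d_{++}$ gives $c_{\mathcal T}(X)[e]=\phi_{\mathcal T}[e]^\top X\phi_{\mathcal T}[e]\ge 0$, so the constraint $c\in\R_+^{|\mathcal E|}$ in~\eqref{eq:scenario-set} is automatic. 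Hence $\mathcal C(\mathcal T)$ is exactly the image $c_{\mathcal T}(\mathcal X_{\rho_{\mathcal T}})$.

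For convexity I will use the argument already given before the proposition. The function $X\mapsto \Tr(X)-\log\det(X)-d$ is convex on the convex cone $\mathbb S^d_{++}$, so its sublevel set $\mathcal X_{\rho_{\mathcal T}}$ is convex. The map $c_{\mathcal T}$ is linear, so the image of a convex set under it is convex.

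Compactness is the main step, since $\mathcal X_{\rho_{\mathcal T}}$ lives in the open cone and closedness is not automatic. I will work with eigenvalues. Writing $\lambda_1,\dots,\lambda_d>0$ for the eigenvalues of $X$, we have $D_{\mathrm{Burg}}(X,I)=\sum_{j=1}^d h(\lambda_j)$ with $h(\lambda)=\lambda-\log\lambda-1\ge 0$. Each term is nonnegative, so membership forces $h(\lambda_j)\le\rho_{\mathcal T}$ for every $j$. Since $h$ is strictly convex with minimum $0$ at $1$ and $h(\lambda)\to+\infty$ both as $\lambda\downarrow0$ and as $\lambda\to\infty$, there exist constants $0<\underline\lambda\le1\le\overline\lambda<\infty$, depending only on $\rho_{\mathcal T}$, with $\underline\lambda\le\lambda_j\le\overline\lambda$ for every $j$.

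This yields two conclusions. First, $\mathcal X_{\rho_{\mathcal T}}$ is bounded, since $\|X\|_F\le\sqrt d\,\overline\lambda$. Second, $\mathcal X_{\rho_{\mathcal T}}$ is contained in the set $K=\{X\in\mathbb S^d:\underline\lambda I\preceq X\preceq\overline\lambda I\}$, which is closed in $\mathbb S^d$ and lies inside $\mathbb S^d_{++}$.

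To get closedness, take a sequence $X_k\in\mathcal X_{\rho_{\mathcal T}}$ with $X_k\to X$. Then $X\in K\subset\mathbb S^d_{++}$. The Burg divergence is continuous on $\mathbb S^d_{++}$, because $\log\det$ is continuous there, so $D_{\mathrm{Burg}}(X,I)=\lim_k D_{\mathrm{Burg}}(X_k,I)\le\rho_{\mathcal T}$. Thus $\mathcal X_{\rho_{\mathcal T}}$ is closed and bounded in the finite-dimensional space $\mathbb S^d$, hence compact.

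Finally, $c_{\mathcal T}$ is linear and therefore continuous, and the continuous image of a compact set is compact, so $\mathcal C(\mathcal T)$ is compact. The only delicate point is the eigenvalue lower bound $\underline\lambda>0$, which keeps limits away from the boundary of the cone. The degenerate case $\rho_{\mathcal T}=0$ is consistent with this argument: all eigenvalues equal $1$, so $\mathcal X_{\rho_{\mathcal T}}=\{I\}$ and $\mathcal C(\mathcal T)=\{\hat c_{\mathcal T}\}$.
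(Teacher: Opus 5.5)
Your proposal is correct and takes essentially the same route as the paper. The paper's Lemma~\ref{lem:burg-compact} gets nonemptiness and compactness of $\mathcal X_{\rho_{\mathcal T}}$ from the same eigenvalue sublevel-interval bound $m_{\rho_{\mathcal T}} I \preceq X \preceq M_{\rho_{\mathcal T}} I$ and a limit argument, and the proposition then adds convexity from the convex sublevel set and pushes all three properties through the linear map $X\mapsto c_{\mathcal T}(X)$; your remark that the nonnegativity constraint in~\eqref{eq:scenario-set} holds automatically is a small clarification the paper leaves implicit.
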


Thus, for any fixed context and predicted radius, the scenario set used by IDEAL is mathematically well-posed: it is nonempty, preserves the convexity needed for the subsequent optimization steps, and is bounded and closed in the induced travel-time space.

\subsection{Radius Prediction Network for Adaptive Optimism}
\label{sec:rho}

We use a radius prediction network $\varrho$ to map the learned representation vector $\theta(\mathcal T)$ to a nonnegative, context-specific radius $\rho_{\mathcal T}$. Once the representation network $\phi=(\vartheta,\theta,\psi)$ has been trained, $\theta(\mathcal T)$ summarizes call-time contextual factors that are informative for travel-time prediction. Therefore, it is a natural input for predicting the size of the scenario set under the current environment. 

\subsubsection{Offline Target Radius Calculation}

We train the radius prediction network in a supervised manner. We assign a target radius $\rho^{(i)}$ to each training sample $i \in [N]$. The target is the smallest metric perturbation measured by Burg divergence that can make the predicted shortest-path time at least the observed travel-time target $t^{(i)}$. For any $X \in \mathbb S_{++}^d$, define $c_{\mathcal T^{(i)}}(X)$ as in~\eqref{eq:cX} and define the induced shortest-path value
\[
h^{(i)}(X)
\Let \min_{\substack{z \in \{0,1\}^{|\mathcal E|},\, A z = b^{(i)}}} c_{\mathcal T^{(i)}}(X)^\top z.
\]
The nominal prediction is $\hat t^{(i)} \Let h^{(i)}(I)$. We then define the target radius $\rho^{(i)}$ by
\begin{equation}
\rho^{(i)}
\Let \min \left\{ D_{\mathrm{Burg}}(X,I) : X \in \mathbb S_{++}^d,\; h^{(i)}(X) \ge t^{(i)} \right\}.
\label{eq:uncertainty-radius}
\end{equation}
If $\hat t^{(i)} \ge t^{(i)}$, the identity matrix $I$ is feasible and yields $\rho^{(i)} = 0$. If $\hat t^{(i)} < t^{(i)}$, then any feasible solution must satisfy $\rho^{(i)} > 0$, encoding the late-arrival risk that drives our selective second-ambulance decision. Linear-programming duality can be applied to reformulate the constraint $h^{(i)}(X) \ge t^{(i)}$, and we obtain an equivalent convex program with explicit constraints.

\begin{proposition}[Dual reformulation of the target radius]
\label{prop:rho-refor}
Let $\mathbf 1\in\R^{|\mathcal E|}$ denote the all-ones vector. For each training sample $i \in [N]$, the target radius in~\eqref{eq:uncertainty-radius} satisfies
\begin{equation}
\rho^{(i)}
= \left\{
\begin{array}{cl}
\displaystyle \min & D_{\mathrm{Burg}}(X,I) \\
\mathrm{s.t.} & X \in \mathbb S_{++}^d,\; \pi \in \R^{|\mathcal V|},\; \omega \in \R_+^{|\mathcal E|} \\
& c_{\mathcal T^{(i)}}(X)[e] = \phi_{\mathcal T^{(i)}}[e]^\top X \phi_{\mathcal T^{(i)}}[e]
\quad \forall e \in \mathcal E, \\
& A^\top \pi - \omega \le c_{\mathcal T^{(i)}}(X), \\
& (b^{(i)})^\top \pi - \mathbf 1^\top \omega \ge t^{(i)}.
\end{array}
\right.
\label{eq:rho-refor}
\end{equation}

\end{proposition}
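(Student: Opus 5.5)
The plan is to show that, for each fixed $X\in\mathbb S_{++}^d$, the constraint $h^{(i)}(X)\ge t^{(i)}$ holds if and only if there exist $\pi\in\R^{|\mathcal V|}$ and $\omega\in\R_+^{|\mathcal E|}$ satisfying the two linear inequalities in~\eqref{eq:rho-refor}. Once this is established, the feasible $X$-projections of the two programs coincide and the objectives are identical. Hence the optimal values agree, including the case where both are $+\infty$.

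\textbf{Step 1: LP relaxation.} Fix $X$ and write $c=c_{\mathcal T^{(i)}}(X)\ge 0$. The binary shortest-path problem defining $h^{(i)}(X)$ has the same optimal value as its relaxation
\[
\min\{c^\top z : Az=b^{(i)},\ 0\le z\le \mathbf 1\}.
\]
The node-arc incidence matrix $A$ is totally unimodular, and appending the identity rows for the bounds $z\le \mathbf 1$ preserves total unimodularity. Since $b^{(i)}$ and $\mathbf 1$ are integral, the polytope has integral vertices, all of which lie in $\{0,1\}^{|\mathcal E|}$. Feasibility follows from $v_{\mathrm o}^{(i)}$ being connected to $v_{\mathrm d}^{(i)}$, which holds because the data come from real dispatches; I would state this as implicit in the definition of $h^{(i)}$. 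Boundedness is automatic because the feasible set is a polytope. So the LP optimum is attained at a vertex and equals $h^{(i)}(X)$.

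\textbf{Step 2: LP duality.} The dual of this LP, with multiplier $\pi$ for $Az=b^{(i)}$ and $\omega\ge 0$ for $z\le \mathbf 1$, is
\[
\max\{(b^{(i)})^\top\pi-\mathbf 1^\top\omega : A^\top\pi-\omega\le c,\ \omega\ge 0\}.
\]
Strong duality holds because the primal is feasible and bounded, so the dual optimum equals $h^{(i)}(X)$ and is attained. Now suppose $h^{(i)}(X)\ge t^{(i)}$. Then the optimal dual pair $(\pi,\omega)$ satisfies both constraints in~\eqref{eq:rho-refor}. Conversely, suppose some $(\pi,\omega)$ is feasible with $(b^{(i)})^\top\pi-\mathbf 1^\top\omega\ge t^{(i)}$. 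By weak duality, $h^{(i)}(X)\ge (b^{(i)})^\top\pi-\mathbf 1^\top\omega\ge t^{(i)}$.

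\textbf{Step 3: Conclusion.} Minimizing $D_{\mathrm{Burg}}(X,I)$ over the common set of admissible $X$ gives equality of the two programs. Convexity of~\eqref{eq:rho-refor} is a remark only: the map $X\mapsto c(X)$ is linear, so all constraints are jointly linear in $(X,\pi,\omega)$, and the objective is convex.

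\textbf{Main obstacle.} The key step is justifying exactness of the LP relaxation. This requires total unimodularity of $[A;I]$, together with care that a vertex solution is a simple path rather than a path plus disjoint cycles. Any such extra cycles have nonnegative cost, so the optimal value is unaffected, and that is all the argument requires.
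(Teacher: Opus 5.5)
Your proposal is correct and follows essentially the same route as the paper: total unimodularity makes the LP relaxation of $h^{(i)}(X)$ exact, and strong and weak LP duality turn $h^{(i)}(X)\ge t^{(i)}$ into the existence of a feasible $(\pi,\omega)$, so the two programs have the same admissible $X$. Your concern about cycles is unnecessary: the binary set defining $h^{(i)}$ already contains path-plus-cycle vectors, so the integral vertices of the relaxation are feasible for it directly.
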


Problem~\eqref{eq:rho-refor} is convex in $(X,\pi,\omega)$, since the objective is convex on $\mathbb S_{++}^d$ and the constraints are affine. Standard interior-point solvers handle this problem reliably at our scale.

\subsubsection{Radius Prediction Network}
We fit $\varrho$ on the training pairs $\{(\theta(\mathcal T^{(i)}), \rho^{(i)})\}_{i=1}^N$ by solving a standard estimation problem
\[
\min_{\varrho}
\sum_{i=1}^N
\ell_\rho\Bigl(\varrho\bigl(\theta(\mathcal T^{(i)})\bigr),\, \rho^{(i)}\Bigr),
\]
where $\ell_\rho$ is a nonnegative loss on $\R_+ \times \R_+$. We use the mean absolute error loss in practice.

In the inference phase, the trained representation network $\phi=(\vartheta,\theta,\psi)$ and the trained radius prediction network $\varrho$ are held fixed. Given the call-time context $\mathcal T$ for a new demand, we first evaluate the representation network to obtain the nominal edge-travel-time vector $\hat c_{\mathcal T}$ and the contextual representation vector $\theta(\mathcal T)$. We then evaluate the radius prediction network to obtain $\rho_{\mathcal T}\Let \varrho(\theta(\mathcal T))$, and form the scenario set $\mathcal C(\mathcal T)$ via~\eqref{eq:scenario-set}.

\section{Optimistic Time Gap Computation by Difference-of-Convex Programming} \label{sec:solution-procedure}
We now focus on the last component of the IDEAL framework: the computation of the optimistic time gap. At inference time when we deploy IDEAL on an incoming emergency call, the networks $\phi$ and $\varrho$ are fixed and Step~1 of IDEAL evaluates these trained networks on the current context $\mathcal T$ to obtain the point estimate $\hat c_{\mathcal T}$, the scenario radius $\rho_{\mathcal T}$, and hence the scenario set $\mathcal C(\mathcal T)$. This section addresses Step~2: given $\mathcal C(\mathcal T)$ and a primary path $z_1$, we formulate the optimistic time gap $\delta(\mathcal T; z_1)$ and compute an estimate $\widehat{\delta}(\mathcal T; z_1)$ together with an associated candidate secondary path. We show that the problem has a difference-of-convex (DC) structure and that each subproblem can be solved efficiently. Exploiting Definition~\ref{def:C-rho} of the Burg scenario set, equation~\eqref{eq:delta-rho} becomes
\[
\delta(\mathcal T; z_1)
=
\max_{X \in \mathcal X_{\rho_{\mathcal T}}}
\Bigl(
c_{\mathcal T}(X)^\top z_1 - \min_{z \in \mathcal Z} c_{\mathcal T}(X)^\top z
\Bigr).
\]
The above problem is a DC program in $X$: For any path $z\in\mathcal Z$, define the path matrix $G(z)$ by
\begin{equation}
\label{eq:outer_product}
G(z) \Let \sum_{e\in\mathcal E} \phi_{\mathcal T}[e]\bigl(\phi_{\mathcal T}[e]\bigr)^\top z[e] \in \mathbb S^d,
\end{equation}
so that $c_{\mathcal T}(X)^\top z=\inner{G(z)}{X}$ under the Frobenius inner product $\inner{A}{B}\Let \Tr(A^\top B)$. Define the DC components $f_1$, $f_2$ and the resulting objective $f$ by
\[
f_1(X)\Let -\inner{G(z_1)}{X},
\qquad
f_2(X)\Let -\min_{z\in\mathcal Z}\inner{G(z)}{X},
\qquad
f(X)\Let f_1(X)-f_2(X).
\]
The map $X\mapsto\min_{z\in\mathcal Z}\inner{G(z)}{X}$ is concave as a pointwise minimum of affine functions, so $f_2$ is convex. This convexity yields the following DC representation:
\begin{equation}
\label{eq:DC}
\delta(\mathcal T; z_1)
= - \min_{X \in \mathcal X_{\rho_{\mathcal T}}} f(X)
= - \min_{X \in \mathcal X_{\rho_{\mathcal T}}} \bigl( f_1(X) - f_2(X) \bigr).
\end{equation}

Equation~\eqref{eq:DC} shows that the exact optimistic gap $\delta(\mathcal T; z_1)$ is obtained by minimizing the DC function $f$ over $\mathcal X_{\rho_{\mathcal T}}$. Since this DC problem is generally nonconvex, we apply the Difference-of-Convex Algorithm (DCA) as a local descent procedure: at every iterate, DCA linearizes $f_2$ at the incumbent solution and solves a convex surrogate over $\mathcal X_{\rho_{\mathcal T}}$. The resulting feasible matrix produces a computable estimate $\widehat{\delta}(\mathcal T;z_1)$ of the optimistic gap. The next subsection describes $\partial f_2(X)$ and gives a closed-form solution to the DCA subproblem.

\subsection{Difference-of-Convex Algorithm}
\label{subsec:DCA}

DCA requires subgradients of the convex component $f_2$, and we use the matrices $G(z)$ to represent these subgradients.

\begin{proposition}[Subdifferential of $f_2$]
\label{prop:subgradient-sA}
The function $f_2$ is convex on $\mathbb S_{++}^d$ and its subdifferential at $X \in \mathbb S_{++}^d$ satisfies $\partial f_2(X) = \operatorname{conv}\left\{ -G(z) : z \in \mathcal Z\opt(X) \right\}$, where $\mathcal Z\opt(X)$ is the set of shortest paths under $X$,
\[
\mathcal Z\opt(X)
\Let
\argmin_{z \in \mathcal Z}
\sum_{e \in \mathcal E}
\Bigl(\phi_{\mathcal T}[e]^\top X \phi_{\mathcal T}[e]\Bigr) z[e]
=
\argmin_{z \in \mathcal Z} c_{\mathcal T}(X)^\top z.
\]
\end{proposition}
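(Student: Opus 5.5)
We reduce the statement to the classical Danskin formula for the subdifferential of a pointwise maximum of finitely many affine functions. The key preliminary observation is that $f_2$ is a maximum over a \emph{finite} index set. Any $z\in\mathcal Z$ is a binary vector in $\{0,1\}^{|\mathcal E|}$, so $\mathcal Z$ is finite. Hence the family $\{G(z):z\in\mathcal Z\}$ of path matrices in~\eqref{eq:outer_product} is finite, and
\[
f_2(X)= -\min_{z\in\mathcal Z}\inner{G(z)}{X} = \max_{z\in\mathcal Z}\inner{-G(z)}{X}.
\]
This is a pointwise maximum of finitely many linear functions of $X$. It is therefore convex and finite on all of $\mathbb S^d$, and in particular convex on the open convex cone $\mathbb S_{++}^d$. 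The active index set at $X$ is exactly the set of maximizers of $\inner{-G(z)}{X}$, i.e.\ the minimizers of $\inner{G(z)}{X}=c_{\mathcal T}(X)^\top z$. This is $\mathcal Z\opt(X)$.

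Next we prove the two inclusions directly rather than only citing the max-rule. For ``$\supseteq$'': take $z\in\mathcal Z\opt(X)$. For any $Y\in\mathbb S^d$,
\[
f_2(Y)\ge \inner{-G(z)}{Y} = f_2(X)+\inner{-G(z)}{Y-X},
\]
so $-G(z)\in\partial f_2(X)$. Since $\partial f_2(X)$ is convex, it contains the convex hull of these matrices. For ``$\subseteq$'': we first compute the directional derivative. Because $\mathcal Z$ is finite and the non-active indices have values strictly below $f_2(X)$, continuity gives that for small $t>0$ only active indices attain the maximum at $X+tH$. Consequently
\[
f_2'(X;H)=\max_{z\in\mathcal Z\opt(X)}\inner{-G(z)}{H},
\]
which is the support function of $K\Let\operatorname{conv}\{-G(z):z\in\mathcal Z\opt(X)\}$. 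Since $X$ is interior to the domain, $f_2'(X;\cdot)$ is the support function of $\partial f_2(X)$. Two compact convex sets with the same support function coincide, so $\partial f_2(X)=K$. Alternatively, if $S\in\partial f_2(X)\setminus K$, strict separation gives an $H$ with $\inner{S}{H}>\max_{K}\inner{\cdot}{H}=f_2'(X;H)$, contradicting $\inner{S}{H}\le f_2'(X;H)$.

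The only point needing care, which we regard as the main obstacle (a mild one), is the domain issue. Should the subdifferential be taken relative to $\mathbb S_{++}^d$ or to $\mathbb S^d$? Because $X$ lies in the open set $\mathbb S_{++}^d$ and $f_2$ extends as a finite convex function to $\mathbb S^d$, the two notions agree: a subgradient inequality on a neighborhood of $X$ extends globally by convexity. We also note that the Frobenius pairing identifies $\mathbb S^d$ with its own dual, and that each $G(z)$ is symmetric, so the subgradients live in $\mathbb S^d$ as claimed.
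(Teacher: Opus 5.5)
Your proof is correct and follows the same route as the paper. Both write $f_2$ as the finite pointwise maximum of the linear maps $X\mapsto\inner{-G(z)}{X}$, identify the active set with $\mathcal Z\opt(X)$, and read off the subdifferential from the max-rule. The difference is that you derive that rule yourself (subgradient inequality for one inclusion, directional derivative plus support functions for the other) rather than citing the finite-max form of Danskin's theorem, and you also settle the $\mathbb S_{++}^d$ versus $\mathbb S^d$ domain point, which the paper leaves implicit.
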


Proposition~\ref{prop:subgradient-sA} yields a shortest-path subgradient: for $z^{(k)}\in\mathcal Z\opt(X^{(k)})$, set $\zeta^{(k)} = -G\bigl(z^{(k)}\bigr)\in \partial f_2\bigl(X^{(k)}\bigr)$. 
When shortest paths tie, any choice of $z^{(k)}$ is valid. DCA then solves the subproblem
\[
X^{(k+1)} \in \argmin_{X\in\mathcal X_{\rho_{\mathcal T}}}\bigl(f_1(X)-\inner{\zeta^{(k)}}{X}\bigr)
=
\argmin_{X\in\mathcal X_{\rho_{\mathcal T}}}\inner{G^{(k)}}{X},
\qquad \text{where }
G^{(k)} \Let G\bigl(z^{(k)}\bigr)-G(z_1).
\]
The DCA subproblem therefore reduces to a linear optimization over $\mathcal X_{\rho_{\mathcal T}}$:
\begin{equation}
\label{eq:DC-sub}
\min_{X\in\mathcal X_{\rho_{\mathcal T}}}
\inner{G^{(k)}}{X}.
\end{equation}
Let $\lambda_1,\dots,\lambda_d$ be the eigenvalues of $G^{(k)}$, and set $\lambda_{\min} \Let \min_{i=1,\dots,d} \lambda_i$ and $\gamma_0 \Let \max\{0,-\lambda_{\min}\}$. The following theorem provides a closed-form solution to~\eqref{eq:DC-sub} in terms of the eigenvalues of $G^{(k)}$ and a scalar parameter.

\begin{theorem}[Analytical solution of the subproblem]
\label{thm:solution-of-DC-sub}
Suppose that $\rho_{\mathcal T} > 0$ and $G^{(k)} \neq 0$. Then problem~\eqref{eq:DC-sub} has a unique minimizer characterized by
\begin{subequations} \label{eq:X-opt-full}
\begin{equation}
\label{eq:X-opt}
X\opt = \gamma\opt\bigl(G^{(k)} + \gamma\opt I\bigr)^{-1},
\end{equation}
where the scalar $\gamma\opt$ is the unique solution to
\begin{equation}
\label{eq:lagrangian-target}
\sum_{i=1}^d \Bigl( \frac{\gamma}{\lambda_i + \gamma} - \log\Bigl(\frac{\gamma}{\lambda_i + \gamma}\Bigr) - 1 \Bigr)
= \rho_{\mathcal T},
\end{equation}
and satisfies $\gamma\opt \in (\gamma_0,\gamma_{\max}]$, where
\begin{equation}
\label{eq:upper-gamma}
\gamma_{\max}
= \frac{-\lambda_{\min} + \sqrt{\lambda_{\min}^2 + \frac{4 \sum_{i=1}^d \lambda_i^2}{\rho_{\mathcal T}}}}{2}.
\end{equation}
\end{subequations}
\end{theorem}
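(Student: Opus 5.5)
Write $G \Let G^{(k)}$ and $\rho \Let \rho_{\mathcal T}$. The plan is to solve the convex program $\min\{\inner{G}{X} : X\in\mathbb S_{++}^d,\ \Tr(X)-\log\det X - d\le\rho\}$ through its KKT conditions, after checking that those conditions are necessary and sufficient here.

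\textbf{Step 1: the KKT system.} Slater's condition holds, since $X=I$ gives divergence $0<\rho$. The objective is linear and the constraint is convex, so KKT is both necessary and sufficient for optimality, provided an optimum exists. The Lagrangian is $\inner{G}{X}+\mu\,(\Tr X-\log\det X-d-\rho)$ with $\mu\ge0$. Stationarity reads
\[
G+\mu\,(I-X^{-1})=0 .
\]
If $\mu=0$ this forces $G=0$, which is excluded. Hence $\mu>0$, the constraint is active, and $X^{-1}=I+G/\mu$. Setting $\gamma=\mu$ gives $X=\gamma(G+\gamma I)^{-1}$. Positive definiteness of $X$ requires $\lambda_i+\gamma>0$ for every $i$, and $\gamma>0$ holds already. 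Together these give $\gamma>\gamma_0$.

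\textbf{Step 2: the scalar equation and uniqueness of $\gamma$.} Diagonalize $G$. The eigenvalues of $X$ are $x_i=\gamma/(\lambda_i+\gamma)$, so the active constraint $\sum_i(x_i-\log x_i-1)=\rho$ is exactly \eqref{eq:lagrangian-target}. Define $F(\gamma)$ as the left-hand side of that equation on $(\gamma_0,\infty)$.

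As $\gamma\to\infty$, each $x_i\to1$, so $F\to0<\rho$. As $\gamma\downarrow\gamma_0$, there are two cases.
\begin{itemize}
\item If $\lambda_{\min}<0$, the terms with $\lambda_i=\lambda_{\min}$ have $x_i\to\infty$, so $F\to\infty$.
\item If $\lambda_{\min}\ge0$, then $\gamma_0=0$ and every $x_i$ with $\lambda_i>0$ tends to $0$. At least one such $i$ exists, because $G\ne0$ and $\lambda_{\min}\ge 0$. So again $F\to\infty$.
\end{itemize}

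For monotonicity, write $u(x)=x-\log x-1$, so $u'(x)=1-1/x$, and $dx_i/d\gamma=\lambda_i/(\lambda_i+\gamma)^2$. Then
\[
F'(\gamma)=\sum_i\Bigl(1-\frac{\lambda_i+\gamma}{\gamma}\Bigr)\frac{\lambda_i}{(\lambda_i+\gamma)^2}
=-\sum_i\frac{\lambda_i^2}{\gamma(\lambda_i+\gamma)^2}<0 .
\]
The sign is strict because $G\neq0$ means some $\lambda_i\ne0$. Hence $F$ is strictly decreasing, and the intermediate value theorem gives a unique root $\gamma\opt\in(\gamma_0,\infty)$.

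Uniqueness of the minimizer follows: any minimizer must satisfy KKT with some multiplier, every KKT point has this form, and $\gamma$ is unique. Existence follows because the constructed pair $(X\opt,\gamma\opt)$ satisfies KKT, which is sufficient by convexity. (Alternatively, the feasible set is compact, since a bounded Burg ball in $\mathbb S_{++}^d$ is closed and bounded away from singularity.)

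\textbf{Step 3: the upper bound $\gamma_{\max}$ (the main obstacle).} Since $F$ is decreasing, it suffices to show $F(\gamma_{\max})\le\rho$. I would use the elementary inequality
\[
u(x)\le\frac{(x-1)^2}{x}\qquad(x>0).
\]
To check it, set $w=1/x$. The claim becomes $-\log x = \log w\le w-1$, and adding $x-1$ to both sides gives $x-\log x-1\le x+w-2=(x-1)^2/x$.

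Now $x_i-1=-\lambda_i/(\lambda_i+\gamma)$, so $(x_i-1)^2/x_i=\lambda_i^2/\bigl(\gamma(\lambda_i+\gamma)\bigr)$. Each $\lambda_i\ge\lambda_{\min}$, so
\[
F(\gamma)\le\frac{\sum_i\lambda_i^2}{\gamma(\gamma+\lambda_{\min})} .
\]
This bound is at most $\rho$ once $\gamma^2+\lambda_{\min}\gamma-\sum_i\lambda_i^2/\rho\ge0$. The positive root of that quadratic is exactly \eqref{eq:upper-gamma}. One also checks $\gamma_{\max}>\gamma_0$, which holds because $\sum_i\lambda_i^2>0$. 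Therefore $\gamma\opt\le\gamma_{\max}$.

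The delicate parts are choosing the right elementary bound on $u$ so that it reproduces this exact quadratic, and handling the $\lambda_{\min}\ge0$ boundary case cleanly.
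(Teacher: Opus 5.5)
Your proof is correct and follows the paper's argument essentially step for step. It uses the same ingredients: Slater plus KKT, ruling out a zero multiplier because $G^{(k)}\neq 0$, strict monotonicity of the scalar residual with the same two boundary cases at $\gamma_0$, and the bound $x-\log x-1\le (x-1)^2/x$ together with $\lambda_i\ge\lambda_{\min}$ to obtain the quadratic defining $\gamma_{\max}$. The differences are only in ordering and presentation: you first derive the form of every KKT point and then invoke sufficiency, whereas the paper constructs $(X^\star,\gamma^\star)$, verifies KKT, and proves uniqueness afterward; and your Slater point $X=I$ is simpler than the paper's $(1+\epsilon)I$.
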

The closed form~\eqref{eq:X-opt-full} reduces the optimization problem~\eqref{eq:DC-sub} to a one-dimensional search for $\gamma\opt$ over $(\gamma_0,\gamma_{\max}]$. Lemma~\ref{lem:hybrid-gamma-search-app} shows that by applying the hybrid interval-selection scheme of~\cite[Theorem~2 and Section~4]{ref:ye1992new}, the scalar equation~\eqref{eq:lagrangian-target} can be solved numerically to absolute accuracy $\epsilon_\gamma$ in $O\bigl(\log\log(\Delta_\gamma/\epsilon_\gamma)\bigr)$ scalar iterations, where $\Delta_\gamma = \gamma_{\max} - \gamma_0$ is the length of the search interval.

\begin{lemma}[Complexity]
\label{lem:hybrid-gamma-search-app}
For every target accuracy $\epsilon_\gamma \in (0,\Delta_\gamma]$, the unique solution $\gamma\opt$ of~\eqref{eq:lagrangian-target} can be computed to absolute accuracy $\epsilon_\gamma$ in $O\bigl(\log\log(\Delta_\gamma/\epsilon_\gamma)\bigr)$ scalar iterations.
\end{lemma}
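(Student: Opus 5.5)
The plan is to reduce the lemma to the framework of \cite[Theorem~2 and Section~4]{ref:ye1992new}. That framework gives $O(\log\log(\Delta/\epsilon))$ iterations for locating the root of a monotone scalar equation. It applies once the root is bracketed in a known interval of length $\Delta$ and the function satisfies a suitable regularity property near the root (strict monotonicity plus smoothness, so that Newton-type or secant steps converge superlinearly). The hybrid scheme alternates bisection with such fast steps, so that the interval length $\Delta_k$ shrinks at least geometrically and eventually quadratically. Therefore I would write $F(\gamma) \Let \sum_{i=1}^d \bigl(s_i(\gamma) - \log s_i(\gamma) - 1\bigr) - \rho_{\mathcal T}$ with $s_i(\gamma) = \gamma/(\lambda_i+\gamma)$, and verify the hypotheses of that theorem on $(\gamma_0,\gamma_{\max}]$.

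\textbf{Bracketing.} By Theorem~\ref{thm:solution-of-DC-sub}, $\gamma\opt$ is the unique root and lies in $(\gamma_0,\gamma_{\max}]$. To give the hybrid scheme a genuine sign-change interval, I would check the signs at the endpoints directly.
\begin{itemize}
\item At the left endpoint: if $\lambda_{\min}<0$, then as $\gamma\downarrow\gamma_0=-\lambda_{\min}$ we have $s_i\to+\infty$ for the minimizing index, so $F\to+\infty$. If $\lambda_{\min}\ge 0$, then $\gamma_0=0$, and some $\lambda_i>0$ because $G^{(k)}\neq0$; hence $s_i\to0$ and $-\log s_i\to+\infty$.
\item At the right endpoint: $F(\gamma_{\max})\le 0$, which is the content of the bound \eqref{eq:upper-gamma} already established.
\end{itemize}

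\textbf{Monotonicity and smoothness.} Let $\varphi(s)=s-\log s-1$. Then $\varphi'(s)=1-1/s$, and $ds_i/d\gamma=\lambda_i/(\lambda_i+\gamma)^2$. Hence
\[
\frac{d}{d\gamma}\varphi(s_i)=\Bigl(1-\frac{\lambda_i+\gamma}{\gamma}\Bigr)\frac{\lambda_i}{(\lambda_i+\gamma)^2}=-\frac{\lambda_i^2}{\gamma(\lambda_i+\gamma)^2}\le 0 .
\]
This derivative is strictly negative whenever $\lambda_i\neq0$, so $F$ is strictly decreasing on $(\gamma_0,\infty)$. It is also real-analytic there.

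The main obstacle is that the constants in Ye's bound must not degenerate, since $F$ blows up at $\gamma_0$. I would handle this by noting two facts.
\begin{itemize}
\item Once the first bisection steps are done, the working interval $[a,\gamma_{\max}]$ satisfies $a>\gamma_0$. On it, $|F'|$ is bounded below by a positive constant and $F''$ is bounded, with both bounds depending only on $\lambda$ and $\rho_{\mathcal T}$. This is the simple-root condition needed for local quadratic convergence.
\item Ye's hybrid scheme guarantees a constant-factor interval reduction per step in the worst case, independent of these constants. The constants enter only through an additive $O(1)$ term, which is absorbed in the $O(\cdot)$.
\end{itemize}
Combining the two, the number of iterations to reach $\Delta_k\le\epsilon_\gamma$ is $O(\log\log(\Delta_\gamma/\epsilon_\gamma))$. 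Finally, any point of the final bracket is within $\epsilon_\gamma$ of $\gamma\opt$.
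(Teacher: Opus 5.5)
Your bracketing and monotonicity computations are correct, but the complexity step has a genuine gap, located exactly at the obstacle you flagged.

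\cite[Theorem~2]{ref:ye1992new} is not a result about arbitrary strictly monotone analytic functions. Normalize to $y=(\gamma-\gamma_0)/\Delta_\gamma\in(0,1]$ and set $\epsilon_y=\epsilon_\gamma/\Delta_\gamma$. The theorem's hypothesis is then the scale-invariant derivative-growth bound $\bigl|\Theta^{(m)}(y)/(m!\,\Theta'(y))\bigr|^{1/(m-1)}\le\alpha/y$ for all $m\ge2$, with a universal constant $\alpha$. Its conclusion is that $\hat y$ is an approximate Newton root whenever $y\opt\in[\hat y,\tfrac{13}{12}\hat y]$.

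The $\log\log$ count comes from searching on a logarithmic scale. One handles the case $y\opt\le\epsilon_y^3$ separately. One then bisects over the indices of the $O(\log(1/\epsilon_y))$ geometric cells $[\hat y,\tfrac{13}{12}\hat y]$ that cover $[\epsilon_y^3,1]$, which takes $O(\log\log(1/\epsilon_y))$ sign evaluations. Finally, $O(\log\log(1/\epsilon_y))$ Newton steps finish the job.

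You never state or verify that hypothesis. The paper does: it writes $g_i'(t)=-\lambda_i^2u(t)v_i(t)$ with $u=(\gamma_0+t)^{-1}$ and $v_i=(\lambda_i+\gamma_0+t)^{-2}$, applies Leibniz's rule, and uses $\gamma_0+t\ge t$ and $\lambda_i+\gamma_0+t\ge t$ to obtain $\alpha=3/2$.

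Your substitute argument does not deliver the claimed bound. A constant-factor reduction of the bracket per step gives only $O(\log(\Delta_\gamma/\epsilon_\gamma))$ on its own. Combined with local quadratic convergence, it gives about $\log_2(\Delta_\gamma/r)+O(\log\log(r/\epsilon_\gamma))$ iterations, where $r$ is the radius of the quadratic-convergence region. Since $|F''(\gamma)/F'(\gamma)|$ can be of order $1/(\gamma-\gamma_0)$, $r$ is in general only of order $\gamma\opt-\gamma_0$.

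The ratio $\Delta_\gamma/(\gamma\opt-\gamma_0)$ is not bounded uniformly in the data. Take eigenvalues $-\mu$ and $L>0$ and let $\mu\downarrow0$. Then $\Delta_\gamma\to L/\sqrt{\rho_{\mathcal T}}$, while $\gamma\opt-\gamma_0\to Lm/(1-m)$, where $m\in(0,1)$ solves $m-\log m-1=\rho_{\mathcal T}$; for large $\rho_{\mathcal T}$, $m\approx e^{-\rho_{\mathcal T}-1}$.

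So your ``additive $O(1)$'' term is of order $\rho_{\mathcal T}$. For a fixed ratio $\epsilon_\gamma/\Delta_\gamma$, your iteration count is unbounded over instances. What you obtain is a data-dependent bound, not the uniform $O(\log\log(\Delta_\gamma/\epsilon_\gamma))$ the lemma asserts.

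The missing ingredient is the $\alpha/y$ bound. It makes the Newton basin a fixed fraction of $y$, and that is what lets bisection on the logarithmic scale, rather than on $\gamma$ itself, reach the quadratic phase in $O(\log\log)$ steps.
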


Algorithm~\ref{alg:DCA-algorithm} summarizes the resulting DCA scheme. The algorithm iteratively updates the matrix $X$ and the associated shortest path until a stopping criterion is met.
It returns a DCA-based estimate $\widehat{\delta}(\mathcal T; z_1)$ of the optimistic time gap, along with a candidate optimistic secondary path.
When $G^{(k)}=0$, the linear objective in the DCA subproblem~\eqref{eq:DC-sub} is identically zero on $\mathcal X_{\rho_{\mathcal T}}$, so every feasible matrix is an exact minimizer of the subproblem. In this degenerate case, we use a feasible restart to reduce the risk of remaining at the zero-gap fixed point that can occur when the current shortest path is the primary path $z_1$.

\begin{algorithm}[ht]
\caption{Difference-of-Convex Algorithm for estimating the optimistic time gap in~\eqref{eq:DC}}
\label{alg:DCA-algorithm}
\begin{algorithmic}[1]
\Require Edge embeddings $\{\phi_{\mathcal T}[e]\}_{e\in\mathcal E}$, primary path $z_1\in\mathcal Z$, feasible path set $\mathcal Z$, uncertainty radius $\rho_{\mathcal T}\ge0$, DCA stopping tolerance $\epsilon>0$, maximum iterations $K\in\mathbb N_+$.
\State Initialize $X^{(0)} \gets I$ and precompute $G(z_1)$ from~\eqref{eq:outer_product}.
\For{$k=0,1,\dots,K-1$}
    \State Compute $z^{(k)} \in \argmin_{z\in\mathcal Z} c_{\mathcal T}(X^{(k)})^\top z$ with costs from~\eqref{eq:cX}.
    \State Set $\zeta^{(k)}\gets -G(z^{(k)})$ and $G^{(k)}\gets G(z^{(k)})-G(z_1)$.
    \State \textbf{if} $\rho_{\mathcal T}=0$ \textbf{then} $X^{(k+1)}\gets I$.
    \State \textbf{else if} $G^{(k)}=0$ \textbf{then} randomly choose a feasible restart matrix $X^{(k+1)}\in\mathcal X_{\rho_{\mathcal T}}\setminus\{X^{(k)}\}$.
    \State \textbf{else}
        \State \;\;\quad Compute an eigendecomposition $G^{(k)} = Q\Lambda_\lambda Q^\top$, where $\Lambda_\lambda\gets\Diag(\lambda_1,\dots,\lambda_d)$.
        \State \;\;\quad Find $\gamma\opt$ by Theorem~\ref{thm:solution-of-DC-sub}, and set $X^{(k+1)}\gets \gamma\opt\bigl(G^{(k)}+\gamma\opt I\bigr)^{-1}$.
    \State \textbf{end if}
    \State Compute $W_k\gets f_1(X^{(k)})-f_1(X^{(k+1)})-\inner{\zeta^{(k)}}{X^{(k)}-X^{(k+1)}}$.
    \State \textbf{if} $\rho_{\mathcal T}=0$ or $\bigl(G^{(k)}\neq 0 \text{ and } W_k\le\epsilon\bigr)$ \textbf{ then break.}
\EndFor
\State Let $k_{\mathrm{last}}\gets k+1$ and compute $z^{(k_{\mathrm{last}})} \in \argmin_{z\in\mathcal Z} c_{\mathcal T}(X^{(k_{\mathrm{last}})})^\top z$.
\State Output the estimate $\widehat{\delta}(\mathcal T; z_1)\gets c_{\mathcal T}(X^{(k_{\mathrm{last}})})^\top z_1 - c_{\mathcal T}(X^{(k_{\mathrm{last}})})^\top z^{(k_{\mathrm{last}})}$ and the candidate secondary path $z^{(k_{\mathrm{last}})}$.
\end{algorithmic}
\end{algorithm}
Because $X^{(k_{\mathrm{last}})}\in\mathcal X_{\rho_{\mathcal T}}$ is feasible and $z^{(k_{\mathrm{last}})}$ is shortest under $X^{(k_{\mathrm{last}})}$, the returned estimate satisfies $\widehat{\delta}(\mathcal T; z_1) \leq \delta(\mathcal T; z_1)$. Thus $\widehat{\delta}(\mathcal T; z_1)$ is a valid lower-bound estimate of the exact optimistic gap.

\begin{proposition}[Computational complexity per DCA iteration]
\label{prop:complexity}
Let $d$ be the embedding dimension of $\phi_{\mathcal T}[e]$, and let $|\mathcal V|$ and $|\mathcal E|$ denote the number of nodes and edges in the traffic graph. Let $\Delta_\gamma = \gamma_{\max} - \gamma_0$, and let $\epsilon_\gamma$ denote the target accuracy for $\gamma$. One iteration of Algorithm~\ref{alg:DCA-algorithm} has worst-case complexity $O\bigl(|\mathcal E| d^2 + |\mathcal V| \log |\mathcal V| + d^3 + d \log\log(\Delta_\gamma/\epsilon_\gamma)\bigr)$.
\end{proposition}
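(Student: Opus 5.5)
We bound the cost of one pass through the loop of Algorithm~\ref{alg:DCA-algorithm} line by line and then add the bounds. One iteration has four expensive parts: (i) computing the edge costs and the shortest path $z^{(k)}$; (ii) forming $G(z^{(k)})$ and $G^{(k)}$; (iii) the eigendecomposition and the scalar search for $\gamma\opt$, followed by forming $X^{(k+1)}$; (iv) the stopping quantity $W_k$. The branches $\rho_{\mathcal T}=0$ and $G^{(k)}=0$ are no more expensive than the generic branch. For the feasible restart we can fix a cheap rule, for instance $X^{(k+1)}=sI$ with a scalar $s$ chosen so that $d(s-\log s-1)\le\rho_{\mathcal T}$ and $sI\neq X^{(k)}$. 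The worst case is therefore the \textbf{else} branch.

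\textbf{Step (i).} Each cost $c_{\mathcal T}(X^{(k)})[e]=\phi_{\mathcal T}[e]^\top X^{(k)}\phi_{\mathcal T}[e]$ is a quadratic form costing $O(d^2)$, so all costs take $O(|\mathcal E|d^2)$. All costs are nonnegative since $X^{(k)}\succ0$. Hence the shortest path over $\mathcal Z$ can be found with Dijkstra's algorithm using a Fibonacci heap in $O(|\mathcal E|+|\mathcal V|\log|\mathcal V|)$. We first add a super-source joined to MOR and MOU by zero-cost arcs, which adds only $O(1)$ arcs; Remark~\ref{rem:smoothing} already quotes this bound.

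\textbf{Step (ii).} $G(z^{(k)})$ is a sum of at most $|\mathcal E|$ rank-one matrices $\phi_{\mathcal T}[e]\phi_{\mathcal T}[e]^\top$, each costing $O(d^2)$, for a total of $O(|\mathcal E|d^2)$. $G(z_1)$ is precomputed once. The subtraction giving $G^{(k)}$ costs $O(d^2)$, and so does the test $G^{(k)}=0$.

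\textbf{Step (iii).} A symmetric eigendecomposition costs $O(d^3)$. With the eigenvalues in hand, evaluating the left side of~\eqref{eq:lagrangian-target} at a given $\gamma$ costs $O(d)$, as does its derivative, which is needed by the hybrid Newton/bisection scheme. Computing $\gamma_0$ and $\gamma_{\max}$ from~\eqref{eq:upper-gamma} also costs $O(d)$. By Lemma~\ref{lem:hybrid-gamma-search-app}, $O(\log\log(\Delta_\gamma/\epsilon_\gamma))$ such evaluations suffice, which gives the term $O(d\log\log(\Delta_\gamma/\epsilon_\gamma))$. We then form $X^{(k+1)}=Q\,\Diag\bigl(\gamma\opt/(\lambda_i+\gamma\opt)\bigr)Q^\top$, reusing $Q$, in $O(d^3)$. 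The divisions are well defined because $\gamma\opt>\gamma_0\ge-\lambda_{\min}$.

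\textbf{Step (iv).} $W_k$ reduces to Frobenius inner products of $d\times d$ matrices, each costing $O(d^2)$. The final shortest-path computation after the loop has the same cost as step (i) and is incurred only once.

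Summing steps (i)--(iv) gives $O(|\mathcal E|d^2+|\mathcal V|\log|\mathcal V|+d^3+d\log\log(\Delta_\gamma/\epsilon_\gamma))$. The $O(|\mathcal E|)$ from Dijkstra is absorbed into $O(|\mathcal E|d^2)$, and $O(d^2)$ into $O(d^3)$. The only step that is not routine is the scalar search. There we need that each hybrid iteration requires only $O(1)$ evaluations of the function and its derivative, which follows from the structure of~\cite{ref:ye1992new} as used in Lemma~\ref{lem:hybrid-gamma-search-app}. We also need that each evaluation costs $O(d)$ once the eigenvalues are cached, and not $O(d^3)$ as it would if the matrix were re-formed at each step. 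This is the point to verify carefully.
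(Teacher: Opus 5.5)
Your proposal is correct and uses the same line-by-line accounting as the paper's proof. The costs match: $O(|\mathcal E|d^2)$ for the edge costs and for forming $G^{(k)}$; $O(|\mathcal E|+|\mathcal V|\log|\mathcal V|)$ for Dijkstra with a Fibonacci heap; $O(d^3)$ for the eigendecomposition and for forming $X^{(k+1)}=Q\Lambda_X Q^\top$; and $O(d\log\log(\Delta_\gamma/\epsilon_\gamma))$ for the scalar search. Your extra details make explicit what the paper leaves under ``remaining operations are dominated'': the super-source, the $O(d^2)$ cost of $W_k$, a cheap concrete restart in place of the unspecified random one, and $O(1)$ evaluations of $O(d)$ cost per root-finding step.
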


\subsection{Convergence Analysis}\label{sec:solution-convergence}
Algorithm~\ref{alg:DCA-algorithm} produces the sequence of matrices $\{X^{(k)}\}$. This subsection studies the limit behavior of this sequence and the corresponding objective values. Lemma~\ref{lem:burg-compact} shows that the feasible set $\mathcal X_{\rho_{\mathcal T}}$ is nonempty, compact, and convex. Because we solve~\eqref{eq:DC-sub} over $\mathcal X_{\rho_{\mathcal T}}$, we can write~\eqref{eq:DC} as an unconstrained difference-of-convex program with objective
\[
X \mapsto f_1(X) + \iota_{\mathcal X_{\rho_{\mathcal T}}}(X) - f_2(X),
\]
where $\iota_{\mathcal X_{\rho_{\mathcal T}}}$ is the indicator function of $\mathcal X_{\rho_{\mathcal T}}$. The function $f_1 + \iota_{\mathcal X_{\rho_{\mathcal T}}}$ is proper, convex, and lower semicontinuous on $\mathbb S^d$. The function $f_2$ is convex and finite on $\mathcal X_{\rho_{\mathcal T}}$. Standard subdifferential calculus for convex functions and normal cones applies here; see \cite[Section~1]{ref:le2014dc}.
We call a point $X \in \mathcal X_{\rho_{\mathcal T}}$ \textit{DC-critical} for $f$ if $0 \in \partial\bigl(f_1 + \iota_{\mathcal X_{\rho_{\mathcal T}}}\bigr)(X) - \partial f_2(X)$, that is, if there exist $u \in \partial\bigl(f_1 + \iota_{\mathcal X_{\rho_{\mathcal T}}}\bigr)(X)$ and $v \in \partial f_2(X)$ with $u = v$. 

The next theorem provides the finite-run descent bound used by Algorithm~\ref{alg:DCA-algorithm} and the corresponding asymptotic DCA consequence, following the classical analysis in~\cite[Theorem~3(iv)]{ref:tao1997convex} and~\cite[Section~1]{ref:le2014dc}.

\begin{theorem}[Finite-run and asymptotic convergence of Algorithm~\ref{alg:DCA-algorithm}]
\label{thm:convergence-of-DCA}
Let $K_{\mathrm{run}}\le K$ denote the number of completed DCA iterations before Algorithm~\ref{alg:DCA-algorithm} terminates. For every completed iteration $k$, we have $f(X^{(k+1)})\le f(X^{(k)})$.
Moreover, letting $f\opt = \min_{X\in\mathcal X_{\rho_{\mathcal T}}} f(X)$, for $m=1,\ldots,K_{\mathrm{run}}$,
\[
0\le \sum_{k=0}^{m-1}W_k
\le f(X^{(0)})-f(X^{(m)})
\le f(X^{(0)})-f\opt.
\]
If the same DCA recursion is continued indefinitely, then $\sum_{k=0}^{+\infty}W_k<+\infty$, $W_k\to0$, and every accumulation point $\bar X$ of the resulting sequence $\{X^{(k)}\}$ is a DC-critical point of $f$ on $\mathcal X_{\rho_{\mathcal T}}$. In this asymptotic regime, the objective values $f(X^{(k)})$ converge.
\end{theorem}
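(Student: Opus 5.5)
The argument follows the classical DCA descent analysis of~\cite[Theorem~3]{ref:tao1997convex}, specialized to the structure of Algorithm~\ref{alg:DCA-algorithm}. The first step is a one-step descent inequality. At iteration $k$ we have $\zeta^{(k)}=-G(z^{(k)})\in\partial f_2(X^{(k)})$ by Proposition~\ref{prop:subgradient-sA}. The subgradient inequality then gives
\[
f_2(X^{(k+1)})\ge f_2(X^{(k)})+\inner{\zeta^{(k)}}{X^{(k+1)}-X^{(k)}}.
\]
Subtracting it from $f_1(X^{(k+1)})$ and rearranging yields
\[
f(X^{(k)})-f(X^{(k+1)})\ge f_1(X^{(k)})-f_1(X^{(k+1)})-\inner{\zeta^{(k)}}{X^{(k)}-X^{(k+1)}}=W_k.
\]

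The second step is to show $W_k\ge0$ in each branch of the algorithm.
\begin{itemize}
\item If $\rho_{\mathcal T}=0$, then $\mathcal X_{\rho_{\mathcal T}}=\{I\}$, so $X^{(k+1)}=X^{(k)}=I$ and $W_k=0$.
\item If $G^{(k)}\neq0$, then $W_k=\inner{G^{(k)}}{X^{(k)}}-\inner{G^{(k)}}{X^{(k+1)}}$. This is nonnegative because $X^{(k+1)}$ minimizes the linear objective $\inner{G^{(k)}}{\cdot}$ over $\mathcal X_{\rho_{\mathcal T}}$ by Theorem~\ref{thm:solution-of-DC-sub}, and $X^{(k)}$ is feasible.
\item If $G^{(k)}=0$, the restart branch gives $W_k=\inner{G^{(k)}}{X^{(k)}-X^{(k+1)}}=0$ for any feasible restart matrix.
\end{itemize}
Feasibility of all iterates ($X^{(0)}=I$, and each subproblem returns a point of $\mathcal X_{\rho_{\mathcal T}}$) keeps the invariant. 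Together with the first step this gives monotonicity $f(X^{(k+1)})\le f(X^{(k)})$. Telescoping over $k=0,\dots,m-1$ gives the middle inequality, and $f(X^{(m)})\ge f\opt$ gives the last. The minimum $f\opt$ is attained because $f$ is continuous on the compact set $\mathcal X_{\rho_{\mathcal T}}$ (Lemma~\ref{lem:burg-compact}); $f_2$ is a finite max of linear functions.

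For the asymptotic part, the same telescoping bound holds for every $m$, so the partial sums of $W_k\ge0$ are bounded by $f(X^{(0)})-f\opt$. Hence $\sum_k W_k<\infty$ and $W_k\to0$. The values $f(X^{(k)})$ are nonincreasing and bounded below by $f\opt$, so they converge. Compactness of $\mathcal X_{\rho_{\mathcal T}}$ gives the existence of accumulation points.

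Criticality of an accumulation point $\bar X$ is the main obstacle. Take a subsequence $X^{(k_j)}\to\bar X$. Since $\mathcal Z$ is finite, pass to a further subsequence on which $z^{(k_j)}\equiv\bar z$ is constant. Then $\zeta^{(k_j)}=-G(\bar z)$ is constant, and by closedness of the shortest-path argmin under continuous costs, $\bar z\in\mathcal Z\opt(\bar X)$, so $-G(\bar z)\in\partial f_2(\bar X)$.

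Next we need $-G(\bar z)\in\partial(f_1+\iota)(\bar X)$, that is, that $\bar X$ minimizes $\inner{G(\bar z)-G(z_1)}{\cdot}$ over $\mathcal X_{\rho_{\mathcal T}}$. In the non-degenerate case, $X^{(k_j+1)}$ minimizes this linear function over the fixed set. Moreover,
\[
W_{k_j}=\inner{\bar G}{X^{(k_j)}}-\min_{\mathcal X_{\rho_{\mathcal T}}}\inner{\bar G}{\cdot}\to0,
\qquad \bar G\Let G(\bar z)-G(z_1).
\]
By continuity, $\inner{\bar G}{\bar X}=\min_{\mathcal X_{\rho_{\mathcal T}}}\inner{\bar G}{\cdot}$, which is exactly the required optimality condition. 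Passing through $W_k\to0$ in this way avoids needing convergence of $X^{(k_j+1)}$.

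In the degenerate case $\bar G=0$, the condition holds trivially since $\bar z$ gives $G(\bar z)=G(z_1)$. The proof should carefully note that the restart branch still yields a valid DC-critical conclusion in this case. The continuation of the recursion when $\rho_{\mathcal T}=0$ is trivial, since then $\mathcal X_{\rho_{\mathcal T}}=\{I\}$.
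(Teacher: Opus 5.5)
Your proof is correct. The descent half matches the paper: the subgradient inequality for $f_2$ gives $f(X^{(k)})-f(X^{(k+1)})\ge W_k$, and this is combined with $W_k\ge0$, telescoping, and attainment of $f\opt$ on the compact set. The only difference there is bookkeeping. The paper treats all branches at once by noting that $X^{(k+1)}$ minimizes $f_1+\iota_{\mathcal X_{\rho_{\mathcal T}}}-\inner{\zeta^{(k)}}{\cdot}$ in every case, since the restart is a minimizer when $G^{(k)}=0$. You check the three branches separately.

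The criticality step is where you take a genuinely different route. The paper uses the classical DCA argument:
\begin{itemize}
\item It extracts further subsequences with $X^{(k_j+1)}\to\hat X$ and $\zeta^{(k_j)}\to\bar\zeta$.
\item Closedness of the graphs of $\partial(f_1+\iota_{\mathcal X_{\rho_{\mathcal T}}})$ and $\partial f_2$ then gives $\bar\zeta\in\partial(f_1+\iota_{\mathcal X_{\rho_{\mathcal T}}})(\hat X)$ and $\bar\zeta\in\partial f_2(\bar X)$.
\item Finally, $W_{k_j}\to0$ shows that $\bar X$ and $\hat X$ minimize the same affine surrogate, which transfers the first inclusion from $\hat X$ to $\bar X$.
\end{itemize}
You instead use the special structure:
\begin{itemize}
\item Finiteness of $\mathcal Z$ lets you freeze $\zeta^{(k_j)}=-G(\bar z)$ along a subsequence.
\item Closedness of the shortest-path argmin gives $\bar z\in\mathcal Z\opt(\bar X)$.
\item Linearity of $f_1$ lets you write $W_k$ exactly as the optimality gap $\inner{G^{(k)}}{X^{(k)}}-\min_{\mathcal X_{\rho_{\mathcal T}}}\inner{G^{(k)}}{\cdot}$. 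Hence $W_{k_j}\to0$ directly certifies that $\bar X$ minimizes $\inner{\bar G}{\cdot}$ over $\mathcal X_{\rho_{\mathcal T}}$.
\end{itemize}

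What each route buys: yours is more elementary. It needs neither convergence of $X^{(k_j+1)}$ nor any closed-graph property of subdifferentials. It also gives a sharper certificate: the common subgradient can be taken as $-G(\bar z)$ for a single shortest path $\bar z$ at $\bar X$. In other words, $\bar X$ solves the DCA subproblem generated at $\bar X$ by one of its own shortest paths. The paper's argument is structure-agnostic. It would go through unchanged for a nonlinear continuous $f_1$ or an $f_2$ with infinitely many affine pieces, where freezing $\zeta^{(k_j)}$ is impossible.
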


This completes the DCA-based implementation of Step~2 of IDEAL.

\section{Numerical Experiments}\label{sec:experiments}
We evaluate IDEAL using real-time traffic estimates by replaying historical incident locations. We compare it with practical baselines that a dispatch center could implement using depot locations and Google travel-time predictions. We report the share of incidents on which each strategy matches the best realized time among the simulated candidates, the regret when it does not, and how this trade-off changes as the second-ambulance dispatch rate rises. Appendix~\ref{ec:sha-tin-transfer} reports a separate transfer test from Hong Kong Island to Sha Tin.

\subsection{Experiment Setup}\label{sec:experiments:setup}
The main experiment is a controlled replay study of OHCA dispatch in the northwestern region of Hong Kong Island. The study area contains the Morrison Hill (MOR) and Mount Davis (MOU) ambulance depots and nearby neighborhoods, where the two depots often provide plausible competing responses. We build the directed road network $\mathcal G = (\mathcal V,\mathcal E)$ from the corresponding OSMnx subgraph, and selected historical HKFSD OHCA records provide the demand locations used in the replay. Details on data cleaning and study-region construction are provided in Appendix~\ref{ec:dataset-and-region}.

For each evaluation instance, we map the demand to its nearest node in $\mathcal G$, treat that node as the demand location, and launch the replay at the evaluation time. At the start of the replay, we collect contextual covariates from government data sources; the travel-time simulator then uses the real-time Google traffic estimates available during that replay. This design keeps the observed spatial demand pattern while exposing each policy to current traffic and environmental conditions.

\subsubsection{Travel-Time Simulator}
The evaluation uses a controlled travel-time replay built on the Google Routes API. Each evaluation instance is launched at the replay time, when contextual covariates are collected, and real-time traffic estimates are queried. For a given incident, all candidate simulations start from the same call time and use the same segment-level realized-time convention, so strategies are compared under a common traffic source, contextual state, and execution protocol. Adaptive simulations repeatedly traverse the next API-returned segment and re-query from the updated location; hybrid simulations first follow a prescribed prefix and then switch to the same adaptive rule. Appendix~\ref{ec:adaptive-simulation} and Appendix~\ref{ec:hybrid-simulation} give the full procedures.

\subsubsection{Baselines and Metrics}
The number of feasible depot-to-demand paths in $\mathcal G$ is too large to simulate exhaustively. We therefore define, for each incident, a fixed candidate set from the paths generated by the implemented policies and simulate all candidates under the same protocol. This candidate set serves as the benchmark for the reported candidate-optimal rates and regrets. For IDEAL, $z_1$ is the Google primary path: the Google-recommended path from whichever of MOR and MOU has the shorter Google-predicted travel time at dispatch. Algorithm~\ref{alg:DCA-algorithm} is then run conditional on this $z_1$ to obtain the secondary path $z_2$.

Concretely, for each incident $i$, we simulate the following candidate paths:
\begin{enumerate}[leftmargin=10mm]
\item[(P1)] the Google-recommended path from MOR with adaptive rerouting,
\item[(P2)] the Google-recommended path from MOU with adaptive rerouting,
\item[(P3)] the IDEAL secondary path $z_2$, simulated using the hybrid execution rule below.
\end{enumerate}
Path (P3) is evaluated using the hybrid execution rule: the ambulance first follows the prescribed initial prefix of the IDEAL secondary path, then switches to adaptive rerouting for the remainder of the trip. This preserves the secondary-path information used at dispatch while still reflecting later traffic updates through fresh Google Routes queries. Appendix~\ref{ec:hybrid-simulation} shows the exact procedure.

We compare IDEAL with four baselines:
\begin{enumerate}[leftmargin=7mm]
\item \textbf{Region-based dispatch:} dispatch one ambulance from the depot assigned by the fixed service-region rule for the incident location, using adaptive rerouting.
\item \textbf{Google primary dispatch:} dispatch one ambulance from the depot with the shorter Google-predicted travel time at dispatch, using adaptive rerouting.
\item \textbf{Google dual dispatch:} dispatch one ambulance from each depot, both using adaptive rerouting, and take the earlier arrival.
\item \textbf{Google interval-based selective dispatch:} start from Google primary dispatch and add one ambulance from the other depot, using adaptive rerouting, only when the selected depot's Google pessimistic estimate exceeds the other depot's Google optimistic estimate by more than a time threshold.
\end{enumerate}
These baselines cover region-based, point-estimate, always-dual, and Google interval-based selective rules under the same adaptive simulator. This design keeps the traffic oracle and execution convention common across strategies. IDEAL changes the uncertainty representation and secondary-path selection while using the same adaptive simulation protocol. We also report \textit{IDEAL dual}, an always-dual operating point that dispatches one ambulance along the Google primary path and one ambulance along the IDEAL secondary path for every incident.

All reported metrics are based on the simulator's realized travel times. Let $n$ be the number of evaluation instances. For incident $i$ and strategy $s$, let $t^{(i)}(s)$ denote the realized travel time in seconds obtained by the simulated execution of strategy $s$ on that incident. This value is computed from the simulated candidate paths selected by the strategy. For a single-dispatch strategy, $t^{(i)}(s)$ is the realized time of the selected candidate path. For a dual-dispatch strategy, $t^{(i)}(s)$ is the smaller of the two realized candidate times, that is, the arrival time of the earliest ambulance at the demand. We also track dispatch volume through
\[
\bar a(s) \Let \frac{1}{n}\sum_{i=1}^n a^{(i)}(s),
\]
where $a^{(i)}(s)\in\{1,2\}$ is the number of ambulances dispatched under strategy $s$ for incident $i$.

\noindent\textbf{Candidate-optimal benchmark.}
Let $\mathcal P_i$ be the set of candidate paths simulated for incident $i$ as described above, with each path following its specified execution rule. For any candidate path $p\in\mathcal P_i$, let $t^{(i)}(p)$ denote the simulator's realized travel time in seconds for that candidate. Define the best realized time within this candidate set by
\[
t^{(i)}_{\min} \Let \min_{p\in\mathcal P_i} t^{(i)}(p).
\]

\noindent\textbf{Candidate-optimal rate.}
A strategy $s$ is \textit{candidate-optimal} on incident $i$ if it attains the benchmark, i.e., if $t^{(i)}(s)=t^{(i)}_{\min}$. The candidate-optimal rate is the fraction of incidents for which this occurs:
\[
\mathrm{CandOptRate}(s) \Let \frac{1}{n}\sum_{i=1}^n \mathbbm{1}\{t^{(i)}(s)=t^{(i)}_{\min}\}.
\]
Figure~\ref{fig:opt-rate} reports $\mathrm{CandOptRate}(s)$ for each strategy.

\noindent\textbf{Regret and tail risk.}
To quantify how far a strategy $s$ falls short of the best candidate, we define the regret as
\[
\operatorname{Reg}_i(s) \Let t^{(i)}(s)-t^{(i)}_{\min} \ge 0.
\]
We summarize regret by its sample mean, selected quantiles, and tail-risk measures. OHCA outcomes deteriorate quickly when response is delayed, so a few severe delays matter even when the mean regret is small. We therefore report the 95th and 99th percentiles of regret, together with $\mathrm{CVaR}_{0.95}$ and $\mathrm{CVaR}_{0.99}$.

\noindent\textbf{Pareto plots.}
We sweep the time threshold over a grid of values. Each threshold value yields one operating point for IDEAL, with a corresponding average dispatch volume and regret level.
We plot average dispatch volume $\bar a(s)$ on the horizontal axis, and either mean regret or $\mathrm{CVaR}_{0.95}$ regret on the vertical axis in Figure~\ref{fig:pareto}. Each point corresponds to one threshold value.

\noindent\textbf{Regret distribution.}
Figure~\ref{fig:regret-cdf} plots the empirical cumulative distribution function of regret. The horizontal axis shows regret in seconds, and the vertical axis shows the fraction of incidents with regret no greater than that value. Table~\ref{tab:regret-metrics} reports the numerical summary.

\noindent\textbf{One-sided statistical tests.}
We test whether the per-incident gains of the dual-dispatch version of IDEAL are systematic. For each baseline $b$, we apply a one-sided paired Wilcoxon signed-rank test on $\operatorname{Reg}_i(b)-\operatorname{Reg}_i(\text{IDEAL dual})$, with the alternative hypothesis that baseline regret is larger. Table~\ref{tab:wilcoxon} reports these results.

\subsection{Results}\label{sec:experiments:results}

Figure~\ref{fig:opt-rate} compares candidate-optimal rates across strategies. Region-based dispatch is candidate-optimal for 49.7\% of incidents, suggesting that static regions do not capture real-time local congestion. Google primary dispatch raises the rate to 84.1\%, and Google dual dispatch reaches 88.4\% by sending ambulances from both depots. The IDEAL dual endpoint dispatches two ambulances for every incident and achieves a 95.8\% candidate-optimal rate. Figure~\ref{fig:opt-rate} also marks another operating point that exceeds the strongest baseline, reaching an 88.8\% candidate-optimal rate with a 48.7\% average increase in dispatch volume over single-dispatch baselines.

\begin{figure}[ht]
    \centering
    \includegraphics[width=0.5\linewidth]{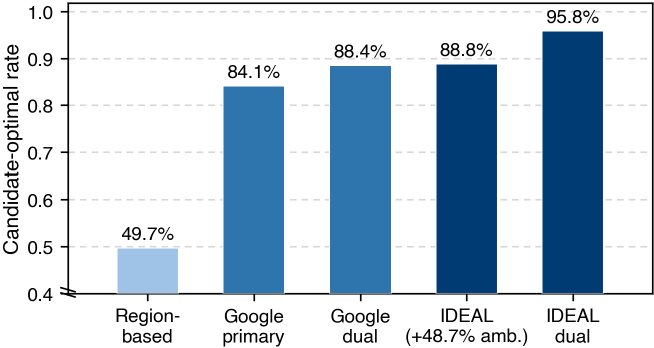}
    \caption{Candidate-optimal rates across strategies. The figure marks the always-dual IDEAL operating point and another IDEAL operating point that exceeds the strongest baseline with a lower average dispatch volume.}
    \label{fig:opt-rate}
\end{figure}

Figure~\ref{fig:pareto} plots mean regret and $\mathrm{CVaR}_{0.95}$ regret against average dispatch volume. A higher time threshold reduces second-ambulance dispatch and moves the operating point left. IDEAL achieves most of the mean-regret reduction with around 50\% more ambulances and approaches the best point with around 80\% more ambulances.
At $\bar a=2$, $\mathrm{CVaR}_{0.95}$ regret falls from 60.9~seconds under Google dual dispatch to 20.5~seconds under IDEAL, which is a 66.3\% reduction. These tail improvements show that IDEAL effectively reduces rare but severe delays, which are the incidents where additional travel-time delays matter most.

\begin{figure}[h]
    \centering
    \includegraphics[width=\linewidth]{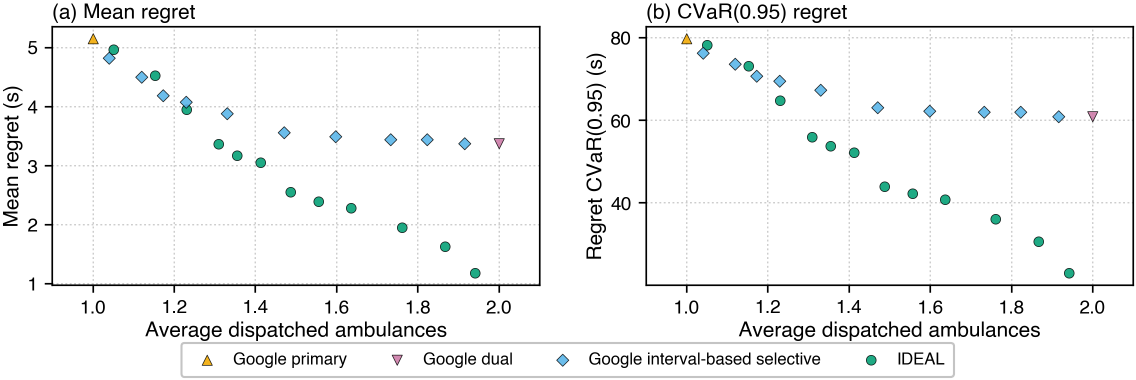}
    \caption{Mean-regret and tail-risk trade-offs for the same threshold sweep. Left: mean regret versus average dispatch volume. Right: $\mathrm{CVaR}_{0.95}$ regret versus average dispatch volume.}
    \label{fig:pareto}
\end{figure}

Figure~\ref{fig:regret-cdf} plots the empirical cumulative distribution function of regret. The horizontal axis shows regret in seconds; the vertical axis shows the fraction of incidents with regret no greater than that value. The IDEAL (dual) curve rises faster than the baseline curves, indicating lower regret in both typical incidents and the high-regret tail. The numerical summaries in Table~\ref{tab:regret-metrics} confirm this distributional pattern across mean regret, upper quantiles, and CVaR tail metrics. Under IDEAL (dual), the mean regret is 1.0~seconds, and the 95th-percentile regret is 0.0~seconds. Thus, for at least 95\% of incidents, IDEAL dual matches the best realized time in the candidate set.

\begin{figure}[h]
    \centering
    \includegraphics[width=0.45\linewidth]{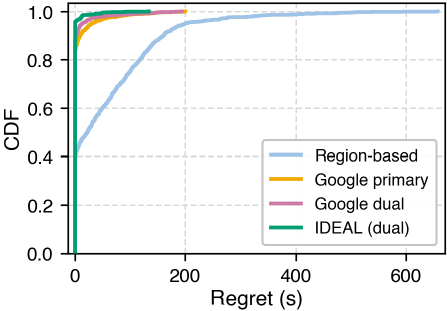}
    \caption{Empirical CDF of regret. For each regret $x$, the curve reports the fraction of incidents with regret at most $x$.}
    \label{fig:regret-cdf}
\end{figure}

\begin{table}[h]
    \centering
    \small
    \setlength{\tabcolsep}{3pt}
    \caption{Summary statistics of regret in seconds. Lower values indicate smaller deviations from the best realized candidate path. Rows containing a percentage correspond to threshold-based selective policies at selected dispatch-volume levels, where the percentage denotes the increase in average dispatch volume relative to a single-dispatch policy, and \textbf{IDEAL, dual} is the always-dual operating point.}
    \label{tab:regret-metrics}
    \begin{tabular}{lrrrrr}
    \toprule
    Strategy & Mean $\downarrow$ & P95 $\downarrow$ & P99 $\downarrow$ & $\mathrm{CVaR}_{0.95}\downarrow$ & $\mathrm{CVaR}_{0.99}\downarrow$ \\
    \midrule
Region-based & 59.6 & 200.0 & 414.4 & 324.1 & 502.0 \\
Google primary & 5.2 & 29.1 & 121.1 & 79.8 & 156.8 \\
Google interval, +25.2\% amb. & 4.1 & 20.0 & 113.4 & 69.5 & 156.8 \\
Google interval, +50.0\% amb. & 3.5 & 15.0 & 104.2 & 62.8 & 147.0 \\
Google interval, +75.3\% amb. & 3.4 & 13.0 & 104.2 & 62.0 & 147.0 \\
Google dual & 3.4 & 13.0 & 104.2 & 60.9 & 147.0 \\
\textbf{IDEAL, +25.0\% amb.} & 3.7 & 21.0 & 85.2 & 61.6 & 129.7 \\
\textbf{IDEAL, +50.0\% amb.} & 2.5 & 12.0 & 57.0 & 43.3 & 98.2 \\
\textbf{IDEAL, +75.0\% amb.} & 2.0 & 7.0 & 50.0 & 36.8 & 95.9 \\
\textbf{IDEAL, dual} & 1.0 & 0.0 & 37.0 & 20.5 & 63.6 \\
\bottomrule
\end{tabular}
\end{table}

We also test for consistent per-incident improvements against baselines using one-sided paired Wilcoxon signed-rank tests on $\operatorname{Reg}_i(b)-\operatorname{Reg}_i(\text{IDEAL dual})$. Table~\ref{tab:wilcoxon} reports the total evaluation sample size $n$, the number $n_{\ne0}$ of nonzero paired regret differences used by the Wilcoxon signed-rank test, the mean regret difference over all evaluation instances with a normal-approximation 95\% confidence interval, the Wilcoxon signed-rank statistic computed after discarding zero differences, and the one-sided asymptotic signed-rank $p$-value. Tied absolute differences receive average ranks; no multiple-comparison adjustment is applied. At the 5\% significance level, we reject the null against all three baselines. The largest $p$-value occurs in the comparison with Google dual dispatch and equals $4.0\times 10^{-7}$, indicating that the improvement of IDEAL dual remains statistically significant even against the strongest baseline.

\begin{table}[h]
\centering
\small
\caption{One-sided paired Wilcoxon signed-rank tests on per-incident regret against baselines in the northwest Hong Kong Island evaluation. The alternative hypothesis is that baseline regret exceeds the regret of \textbf{IDEAL dual}.}
\label{tab:wilcoxon}
\begin{tabular}{lrrrrr}
\toprule
Baseline & Evaluation $n$ & Wilcoxon $n_{\ne0}$ & Mean diff.\ (s; normal 95\% CI) & Statistic & $p$-value \\
\midrule
(1) Region-based & 1160 & 713 & 58.6 [53.7, 63.6] & 251497 & $3.4\times 10^{-113}$ \\
(2) Google primary & 1160 & 161 & 4.1 [3.0, 5.2] & 13041 & $1.7\times 10^{-28}$ \\
(3) Google dual & 1160 & 183 & 2.3 [1.2, 3.4] & 11957.5 & $4.0\times 10^{-7}$ \\
\bottomrule
\end{tabular}
\end{table}

\noindent\textbf{Managerial interpretation.}
Figures~\ref{fig:opt-rate}--\ref{fig:regret-cdf} translate the threshold into an operational capacity lever. A large threshold corresponds to low vehicle availability and reserves backup dispatch for cases where the primary path can be substantially dominated; a small threshold provides greater protection at higher dispatch volumes. A dispatch center can tune the threshold using local availability and coverage targets. In this study, the largest gains are observed in tail regret, which is operationally important for OHCA because rare, long delays can be clinically consequential.

\noindent\textbf{Transfer test.} Appendix~\ref{ec:sha-tin-transfer} repeats the same evaluation design in Sha Tin with a different road network and a different depot pair. IDEAL continues to outperform the Google-based baselines in this transfer setting without retraining the models. The results suggest that the learned scenario construction and dispatch overlay remain useful beyond the main study region.

\section{Conclusions}\label{sec:conclusions}
We study selective ambulance dispatch for OHCA when traffic conditions are context-dependent and ambulance capacity is limited. IDEAL formalizes this trade-off by comparing a primary path with plausible alternatives over a context-specific scenario set. It dispatches a second ambulance only when the optimistic time gap exceeds a decision threshold.

The IDEAL framework combines three components. First, it learns context-specific edge travel times from trip-level dispatch records whose routes are unobserved. This is done through a weakly supervised bilevel representation network. The resulting nonsmooth objective is trained with mini-batch conservative gradients, for which we prove an asymptotic accumulation-set guarantee.
Second, IDEAL constructs correlated travel-time scenarios by perturbing a shared metric in the learned representation space with Burg divergence controlling the perturbation size. A radius prediction network then predicts the context-specific radius based on the risk of underprediction.
Third, IDEAL computes the optimistic gap through a difference-of-convex reformulation. The resulting DCA oracle is efficiently solved by reducing its matrix subproblem to a one-dimensional scalar search. We also provide complexity guarantees for this computation.

Co-developed with the Hong Kong Fire Services Department, IDEAL is evaluated using a real-time adaptive replay simulator against historical OHCA demand locations in Hong Kong. It operates as an overlay on existing routing and dispatch workflows: external engines provide primary routes and real-time traffic estimates, and local capacity information determines the threshold. The results provide controlled replay evidence that selective dual dispatch improves the response-time/resource trade-off relative to region-based and Google-based policies. IDEAL achieves higher candidate-optimal rates, lower mean regret, and lower tail regret.

\section*{Acknowledgments}
Viet Anh Nguyen gratefully acknowledges the support from the CUHK’s Improvement on Competitiveness in Hiring New Faculties Funding Scheme, UGC ECS Grant 24210924, and UGC GRF Grant 14208625.

\newpage
\bibliographystyle{siam}
\bibliography{bibliography}

\newpage
\appendix
\section{Simulation Frameworks} \label{ec:simulation-frameworks}

\subsection{Adaptive Simulation Procedure with Continuous Google Routes Re-querying}
\label{ec:adaptive-simulation}

We simulate an ambulance's travel time under dynamic traffic conditions by repeatedly re-querying the Google Routes API as the ambulance progresses along its route.

\begin{enumerate}[leftmargin=7mm]
\item \textbf{Initial query.}
At simulation time $T=0$, corresponding to demand arrival, we query the API for a route from the origin $v_{\mathrm{o}}$ to the destination $D$. The response gives the initial predicted duration $t_{\mathrm{dur}}^0$, the first road segment $(v_{\mathrm{o}},v_1)$, and its predicted travel time $\Delta t_0$. When the \texttt{OPTIMISTIC} and \texttt{PESSIMISTIC} traffic models are requested, we also record their duration estimates. We initialize $v_{\mathrm{current}}\leftarrow v_{\mathrm{o}}$ and $T\leftarrow 0$.

\item \textbf{Segment update and re-querying.}
At each step, let $(v_{\mathrm{current}},v_{\mathrm{next}})$ be the next segment returned by the current API query, with predicted travel time $\Delta t$. The simulator treats $\Delta t$ as the realized time for that segment, sets $T\leftarrow T+\Delta t$, and updates $v_{\mathrm{current}}\leftarrow v_{\mathrm{next}}$. If $v_{\mathrm{current}}\neq D$, it immediately re-queries the API from $v_{\mathrm{current}}$ to $D$ and obtains an updated route, remaining duration, next segment, and segment time.

\item \textbf{Output.}
The procedure repeats until $v_{\mathrm{current}}=D$. The final clock value $T_{\mathrm{final}}$ is recorded as the realized travel time for the simulated trip.
\end{enumerate}

\subsection{Hybrid Simulation Procedure}
\label{ec:hybrid-simulation}

We simulate an ambulance's travel time under a hybrid execution rule by first following a fixed, precomputed prefix, and then switching to adaptive Google Routes to re-query the remainder of the trip.

\begin{enumerate}[leftmargin=7mm]
\item \textbf{Initial query and prefix initialization.}
Before the simulation begins, we are given a precomputed node sequence $\pi=(v_{\mathrm{o}},v_1,\ldots,v_{m-1},D)$ and a predetermined prefix length $q$. The first $\min\{q,m\}$ segments of $\pi$ form the fixed prefix. At simulation time $T=0$, corresponding to demand arrival, we initialize $v_{\mathrm{current}}\leftarrow v_{\mathrm{o}}$, $T\leftarrow 0$, and the prefix counter $h\leftarrow 0$. If $q>0$, we query the API for the first prescribed segment $(v_{\mathrm{o}},v_1)$ and obtain its predicted travel time $\Delta t_0$.

\item \textbf{Fixed-prefix segment update and adaptive switch.}
While $h<\min\{q,m\}$ and $v_{\mathrm{current}}\neq D$, let $(v_{\mathrm{current}},v_{\mathrm{next}})$ be the next prescribed segment in the fixed prefix, with predicted travel time $\Delta t$ returned by the current API query. The simulator treats $\Delta t$ as the realized time for that segment, sets $T\leftarrow T+\Delta t$, updates $v_{\mathrm{current}}\leftarrow v_{\mathrm{next}}$, and increments $h\leftarrow h+1$. If the fixed prefix has not ended and $v_{\mathrm{current}}\neq D$, the simulator queries the API for the next prescribed segment in $\pi$. Once the fixed prefix is completed, if $v_{\mathrm{current}}\neq D$, the simulator switches to adaptive re-querying: it queries the API from $v_{\mathrm{current}}$ to $D$, obtains an updated route, remaining duration, next segment, and segment time, and then continues with the adaptive segment-update rule in Appendix~\ref{ec:adaptive-simulation}.

\item \textbf{Output.}
The procedure repeats until $v_{\mathrm{current}}=D$. The final clock value $T_{\mathrm{final}}$ is recorded as the realized travel time for the simulated trip.
\end{enumerate}

\section{Calibration of Clinical Risk and Threshold Function} \label{ec:calibration}
This section calibrates the threshold $\mathrm{thr}(\mathcal T)$ through a marginal-benefit calculation, consistent with outcome-based approaches used in EMS planning and decision-analytic evaluation. Clinical OHCA studies associate response times with survival outcomes~\cite{ref:chen2022alsresponse}, and EMS planning models use response-time survival functions to estimate expected survivors and marginal effects of system-design changes~\cite{ref:sund2013analytical,ref:erkut2008ambulance}. In our implementation, we use the local derivative of the specified risk curve at the nominal travel time to approximate the incremental clinical benefit of a predicted time saving. The resulting threshold compares this benefit with the operational cost after converting both to a common scale, analogous to net-benefit approaches that trade off benefits and harms through a prespecified exchange rate~\cite{ref:vickers2016net}.

For each context $\mathcal T$, let $R(t;\mathcal T)$ be a differentiable function that maps a travel time $t$ to a scalar measure of clinical risk, with larger values corresponding to worse outcomes. Later arrivals increase risk, so $R(t;\mathcal T)$ is increasing in $t$. Let $t_{\text{nominal}}(\mathcal T) = \hat c_{\mathcal T}^\top z_1$ denote the nominal travel time of the primary ambulance under our model, where $z_1$ is the primary path used by IDEAL in context $\mathcal T$.

We approximate the behavior of $R(t;\mathcal T)$ near $t_{\text{nominal}}(\mathcal T)$ by a first-order expansion,
\begin{equation}
\label{eq:linearized-R-app}
\widetilde R(t;\mathcal T) \Let R\bigl(t_0;\mathcal T\bigr)
 + \lambda(t_0;\mathcal T)\bigl(t - t_0\bigr),
\qquad
\lambda(t_0;\mathcal T) \Let \frac{\partial R}{\partial t}\bigl(t_0;\mathcal T\bigr) > 0,
\end{equation}
where $t_0$ is a reference travel time and $\lambda(t_0;\mathcal T)$ is the local slope of the risk curve at $t_0$. When $t_0 = t_{\text{nominal}}(\mathcal T)$, $\lambda(t_{\text{nominal}}(\mathcal T);\mathcal T)$ measures how sensitive clinical risk is to small changes in travel time for the current demand.
Under this linear approximation, an improvement of $\Delta t$ units in travel time reduces clinical risk by approximately $\lambda(t_0;\mathcal T)\cdot\Delta t$. We want the threshold function to compare this optimistic reduction with the operational cost of sending a second ambulance.

We represent the operational burden of a second ambulance by a nonnegative scalar $C(\mathcal T)$ expressed in the same units as $R(t;\mathcal T)$. The quantity $C(\mathcal T)$ aggregates several components: the coverage opportunity cost of taking a second unit away from future demand, the expected exposure created by lower local availability, the expected travel time and temporary unavailability of a unit that may be canceled before arrival, redeployment or recall costs, and any other cost elements that the system designer chooses to include. The dispatch center may treat $C(\mathcal T)$ as constant or allow it to depend on coarse indicators, such as current system load, available units, or coverage status.

For a fixed context $\mathcal T$, we define the time-threshold function by $\eta_{\mathcal T}(t) \Let C(\mathcal T) / \lambda(t;\mathcal T)$,
where $\lambda(t;\mathcal T)$ is the local slope of the risk curve at time $t$. The value $\eta_{\mathcal T}(t)$ is the arrival-time gap that, under the linear model~\eqref{eq:linearized-R-app}, produces an optimistic reduction in clinical risk equal to the cost $C(\mathcal T)$. When the optimistic time gap exceeds this value, the reduction in clinical risk outweighs the cost of activating a second ambulance.

In the IDEAL policy, we apply $\eta_{\mathcal T}(\cdot)$ at the nominal travel time of the primary ambulance. The time threshold used in the decision rule~\eqref{eq:decision-rule} is therefore
\[
\mathrm{thr}(\mathcal T) \Let \eta_{\mathcal T}\bigl(t_{\text{nominal}}(\mathcal T)\bigr)
= \frac{C(\mathcal T)}{\lambda\bigl(t_{\text{nominal}}(\mathcal T);\mathcal T\bigr)}.
\]
Under this choice, we have the equivalence
\[
\delta(\mathcal T; z_1) > \eta_{\mathcal T}\bigl(t_{\text{nominal}}(\mathcal T)\bigr) \quad \iff \quad
\lambda\bigl(t_{\text{nominal}}(\mathcal T);\mathcal T\bigr) \cdot \delta(\mathcal T; z_1) > C(\mathcal T),
\]
which compares an optimistic reduction in linearized clinical risk with the operational cost of dispatching a second ambulance.

\section{Data Sources and Preprocessing Details}\label{ec:data-details}
This section describes the data sources and preprocessing steps that produce the modeling inputs.

\subsection{HKFSD OHCA Dataset and Study Region}
\label{ec:dataset-and-region}

The Hong Kong Fire Services Department (HKFSD) manages out-of-hospital emergencies across the territory. As of March~1,~2025, HKFSD operates 536 ambulances from 41 depots.\footnote{\url{https://www.hkfsd.gov.hk/eng/source/fire_services_Factsheet_eng.pdf}, accessed March~1,~2025.} We use operational records for OHCA cases from January~2014 to December~2023, which yield 72{,}157 OHCA records after basic cleaning.

For illustration and controlled experimentation, we focus on the northwest region of Hong Kong Island. The study area spans latitudes from $22.265669$ to $22.290994$ and longitudes from $114.116190$ to $114.184312$. Within this area, the Mount Davis (MOU) and Morrison Hill (MOR) depots frequently compete to serve nearby OHCAs. We restrict attention to records that are primarily served by these two depots and whose demands lie within the study region.

\subsection{Traffic Network Construction}
\label{ec:traffic-network}

We construct a drivable road network $\mathcal G = (\mathcal V, \mathcal E)$ for the study region using \texttt{OSMnx}.\footnote{\url{https://osmnx.readthedocs.io/en/stable/}, accessed March~1,~2025.} The construction includes public and service roads that are accessible to ambulances. Each node $v \in \mathcal V$ represents an intersection or a street endpoint. Each directed edge $e \in \mathcal E$ represents a road segment with length $l[e] > 0$ measured in meters. We map geocoded locations to the nearest network node. 
In the final subgraph for the study area, we obtain 1{,}415 nodes and 2{,}541 directed edges. For each edge $e$, we extract a static feature vector $\mathcal A[e] \in \R^{d_{\mathcal A}}$ that includes geometric and categorical characteristics. Table~\ref{tab:edge_features} lists these features.

\begin{table}[h]
\centering
\small
\caption{Geographic features for each edge in the traffic network.}
\begin{tabularx}{\textwidth}{llX}
\hline
\textbf{Feature name} & \textbf{Dim.} & \textbf{Description} \\
\hline
Length $l[e]$ & 1 & Edge length (m) \\
Speed limit & 1 & Posted or default speed limit (km/h; default 50) \\
Number of neighbors & 2 & Degree counts at origin and destination nodes \\
Is one-way & 1 & Indicator equal to 1 if one-way, 0 otherwise \\
Number of lanes & 1 & Total number of lanes \\
Type of edge & 14 & One-hot indicators for OSM functional classes (motorway, primary, residential, \ldots) \\
\hline
\textbf{Total} & \textbf{20} & \\
\hline
\end{tabularx}
\label{tab:edge_features}
\end{table}

\subsection{Environmental Covariates}
\label{ec:environmental-covariates}

We attach environmental covariates to each record based on its time and location. Hourly weather variables come from a public database maintained by the University of Hong Kong.\footnote{\url{https://cowin.hku.hk/english/downloadbulk.html}, accessed March~1,~2025.} For each record, we extract the weather at the hour closest to the dispatch time. 
The context vector $\mathcal T^{(i)} \in \R^{d_{\mathcal T}}$ includes both weather variables and calendar indicators. Table~\ref{tab:environmental_features} summarizes the encoding.

\begin{table}[h]
\centering
\small
\caption{Environmental features for each OHCA record.}
\begin{tabularx}{\textwidth}{llX}
\hline
\textbf{Feature name} & \textbf{Dim.} & \textbf{Description} \\
\hline
Weather variables & 5 & Temperature, rainfall, relative humidity, wind speed, sea-level pressure \\
Hour & 1 & Hour of day (0--23) \\
Day of week & 7 & One-hot indicators for Monday to Sunday \\
Month & 12 & One-hot indicators for January to December \\
Is holiday & 1 & Indicator for public holiday \\
Is holiday eve & 1 & Indicator for the day before a public holiday \\
\hline
\textbf{Total} & \textbf{27} & \\
\hline
\end{tabularx}
\label{tab:environmental_features}
\end{table}

\subsection{Travel-Time Target Extraction}
\label{ec:travel-time-target}

The raw timing measure is the interval from receipt of the emergency call to CPR initiation. It includes call processing, crew turnout/preparation, driving, vertical access, and any remaining time until CPR initiation. Our travel-time model focuses on the driving component. We therefore subtract two non-driving components to form the travel-time target $t^{(i)}$.

\begin{enumerate}[leftmargin=7mm]
\item \textbf{Depot-specific preparation time.} We estimate, for each depot, the typical time required to process the call, prepare the vehicle, and leave the station. This time depends on call routing and station-level operations, but not on road conditions.
\item \textbf{Vertical-access time.} We estimate an elevation component that captures travel within buildings, such as elevator rides and stair use. We fit an ordinary least squares model that regresses call-receipt-to-CPR time minus depot preparation time on the floor number. This model yields an estimate of $1.90$ seconds per floor, which we apply as a linear adjustment.
\end{enumerate}

We compute the travel-time target for record $i$ as
\[
t^{(i)} = \bigl(\text{call-receipt-to-CPR interval}\bigr)
- \bigl(\text{estimated depot preparation time}\bigr)
- \bigl(\text{estimated elevation time}\bigr).
\]

Table~\ref{tab:additional_features} summarizes the main components of this calculation.

\begin{table}[h]
\centering
\small
\caption{Additional quantities used to construct the travel-time target.}
\begin{tabularx}{\textwidth}{lX}
\hline
\textbf{Quantity} & \textbf{Description} \\
\hline
Incident location & Geocoded latitude--longitude of the demand, snapped to the road network \\
Depot preparation time & Estimated call-processing and vehicle-preparation time, by depot \\
Elevation time & $1.90$ seconds per floor estimated by OLS on adjusted call-receipt-to-CPR intervals \\
\hline
\end{tabularx}
\label{tab:additional_features}
\end{table}

\subsection{Exclusions and Final Sample}
\label{ec:exclusions}

We apply exclusion rules to focus on records in which travel time dominates the response interval and the operational conditions align with the modeling assumptions. The following filters define the final sample:
\begin{enumerate}[leftmargin=7mm]
\item \textbf{Pandemic anomalies.} We drop records in February, March, and December~2022, when COVID-related policies and traffic patterns changed sharply relative to typical operations.
\item \textbf{Extreme elevations.} We remove records with unrecognized floor numbers or floor numbers above 60, because the elevation adjustment becomes unreliable in those cases.
\item \textbf{Spatial outliers.} We exclude trips shorter than $0.5$ km or longer than $10$ km, measured along the road network, to remove very short trips and atypically long transfers.
\item \textbf{Extreme durations.} We remove records with call-receipt-to-CPR intervals longer than 30 minutes, which often correspond to exceptional clinical or operational circumstances.
\end{enumerate}

After applying these filters and restricting to the two-depot subset (MOU and MOR) within the study region, we retain 1{,}781 OHCA records. These records form the samples used in Section~\ref{sec:bilevel-representation-learning}.

\section{Technical Proofs for Section~\ref{sec:ideal-framework}}\label{ec:proofs-ideal-framework}
\begin{proof}[Proof of Theorem~\ref{thm:ideal_optimality}]
Fix a context $\mathcal T$ and a primary path $z_1 \in \mathcal Z$. Let $\delta(\mathcal T; z_1)$ be the optimistic time gap defined in~\eqref{eq:delta-rho}. Since $\mathcal Z$ is finite, the map $c\mapsto \min_{z\in\mathcal Z}c^\top z$ is the pointwise minimum of finitely many linear functions and is continuous. Hence $c\mapsto c^\top z_1-\min_{z\in\mathcal Z}c^\top z$ is continuous. Since $\mathcal C(\mathcal T)$ is nonempty and compact, the maximum in~\eqref{eq:delta-rho} is attained. Hence there exists
\[
c\opt \in \argmax_{c\in\mathcal C(\mathcal T)}
\bigl(c^\top z_1 - \min_{z\in\mathcal Z} c^\top z\bigr),
\]
and we can choose an associated secondary path $z_2 \in \argmin_{z\in\mathcal Z} (c\opt)^\top z$.

We first show that the pair $(c\opt,z_2)$ constructed by the IDEAL algorithm achieves the optimistic time gap in the sense that
\begin{equation}
\label{eq:delta-equals-Delta-max}
\Delta(z_2;\mathcal T,z_1) = \delta(\mathcal T; z_1)
= \max_{z\in\mathcal Z} \Delta(z;\mathcal T,z_1),
\end{equation}
and then show that the pair produced by IDEAL is an optimal solution of~\eqref{eq:pthr}, with the convention that zero surplus is resolved by choosing $\tau=0$.

For any secondary path $z\in\mathcal Z$ and any scenario $c\in\mathcal C(\mathcal T)$,
$c^\top z_1 - c^\top z \le c^\top z_1 - \min_{z\in\mathcal Z} c^\top z$. Taking the maximum over $c\in\mathcal C(\mathcal T)$ on both sides yields
\[
\Delta(z;\mathcal T,z_1)
= \max_{c\in\mathcal C(\mathcal T)} \bigl(c^\top z_1 - c^\top z\bigr)
\le
\max_{c\in\mathcal C(\mathcal T)}\bigl(c^\top z_1 - \min_{z\in\mathcal Z} c^\top z\bigr)
= \delta(\mathcal T; z_1),
\]
for every $z\in\mathcal Z$. Thus $\delta(\mathcal T; z_1)$ is an upper bound on $\Delta(z;\mathcal T,z_1)$ for all secondary paths $z$.

Now consider the specific path $z_2$ chosen by the algorithm. By construction, $(c\opt)^\top z_2 = \min_{z\in\mathcal Z} (c\opt)^\top z$, so
$(c\opt)^\top z_1 - (c\opt)^\top z_2
= (c\opt)^\top z_1 - \min_{z\in\mathcal Z} (c\opt)^\top z
= \delta(\mathcal T; z_1)$.
Therefore
\[
\Delta(z_2;\mathcal T,z_1)
= \max_{c\in\mathcal C(\mathcal T)} \bigl(c^\top z_1 - c^\top z_2\bigr)
\ge
(c\opt)^\top z_1 - (c\opt)^\top z_2
= \delta(\mathcal T; z_1).
\]
Combining this inequality with the upper bound $\Delta(z_2;\mathcal T,z_1)\le \delta(\mathcal T; z_1)$ gives
\[
\Delta(z_2;\mathcal T,z_1) = \delta(\mathcal T; z_1)
\quad\text{and}\quad
\Delta(z_2;\mathcal T,z_1) = \max_{z\in\mathcal Z} \Delta(z;\mathcal T,z_1),
\]
which proves~\eqref{eq:delta-equals-Delta-max} and establishes Item~1 in the theorem.

The optimization problem~\eqref{eq:pthr} can be analyzed by first fixing $z\in\mathcal Z$ and optimizing over $\tau\in\{0,1\}$. For a given $z$, the objective in~\eqref{eq:pthr} equals
\[
\max_{\tau\in\{0,1\}} \tau\bigl(\Delta(z;\mathcal T,z_1) - \mathrm{thr}(\mathcal T)\bigr)
=
\begin{cases}
0, & \text{if } \Delta(z;\mathcal T,z_1) \le \mathrm{thr}(\mathcal T),\\[0.5ex]
\Delta(z;\mathcal T,z_1) - \mathrm{thr}(\mathcal T) & \text{if } \Delta(z;\mathcal T,z_1) > \mathrm{thr}(\mathcal T).
\end{cases}
\]
Hence~\eqref{eq:pthr} is equivalent to
\[
\max_{z\in\mathcal Z}
\max\bigl\{0, \Delta(z;\mathcal T,z_1) - \mathrm{thr}(\mathcal T)\bigr\}.
\]
Let $\Delta_{\max} \Let \max_{z\in\mathcal Z} \Delta(z;\mathcal T,z_1)$. Then the optimal value of~\eqref{eq:pthr} equals $\max\{0,\Delta_{\max} - \mathrm{thr}(\mathcal T)\}$, and any maximizer $z\opt$ of $\Delta(z;\mathcal T,z_1)$ is optimal for~\eqref{eq:pthr} when paired with
\[
\tau\opt =
\begin{cases}
1, & \text{if } \Delta_{\max} > \mathrm{thr}(\mathcal T),\\
0, & \text{if } \Delta_{\max} \le \mathrm{thr}(\mathcal T).
\end{cases}
\]
Moreover, every optimal solution of~\eqref{eq:pthr} has $\tau=1$ exactly when $\Delta_{\max} > \mathrm{thr}(\mathcal T)$. If $\Delta_{\max}\le \mathrm{thr}(\mathcal T)$, the choice $\tau=0$ attains the optimal value $0$.

The identity~\eqref{eq:delta-equals-Delta-max} shows that $\Delta_{\max} = \delta(\mathcal T; z_1)$ and that the secondary path $z_2$ constructed by the IDEAL algorithm is a maximizer of $\Delta(z;\mathcal T,z_1)$. The algorithm uses the rule
\[
\tau =
\begin{cases}
1, & \text{if } \delta(\mathcal T; z_1) > \mathrm{thr}(\mathcal T),\\
0, & \text{if } \delta(\mathcal T; z_1) \le \mathrm{thr}(\mathcal T),
\end{cases}
\]
which matches the optimal choice of $\tau\opt$ above. Thus the pair $(\tau,z_2)$ produced by the IDEAL policy solves~\eqref{eq:pthr}, and the policy dispatches a second unit if and only if $\delta(\mathcal T; z_1) > \mathrm{thr}(\mathcal T)$, 
which is precisely the decision rule~\eqref{eq:decision-rule}. This establishes Item~2 and completes the proof.
\end{proof}

\section{Technical Proofs for Section~\ref{sec:bilevel-representation-learning}}\label{ec:proofs-bilevel-representation-learning}

\subsection{Proofs for Section~\ref{sec:training-convergence}}
\label{ec:training-convergence-proof}

We recall the notion of a conservative field~\cite[Definitions~1--2]{ref:bolte2021conservative}.

\begin{definition}[Conservative field and potential function]\label{def:conservative-field} 
Let $J^{(i)}: \R^p \to \R$ be a locally Lipschitz continuous function. A set-valued mapping $D^{(i)}: \R^p \rightrightarrows \R^p$ with closed graph and nonempty compact values is a \textit{conservative field} for $J^{(i)}$ if for any two points $\phi_0, \phi \in \R^p$ and any absolutely continuous path $\gamma: [0, 1] \to \R^p$ with $\gamma(0) = \phi_0$ and $\gamma(1) = \phi$, we have
\[
J^{(i)}(\phi) = J^{(i)}(\phi_0) + \int_0^1 \max_{v \in D^{(i)}(\gamma(t))} \inner{\dot{\gamma}(t)}{v} \, \dd t.
\]
The function $J^{(i)}$ is called a \textit{potential function} for the conservative field $D^{(i)}$.
\end{definition}

Conservative fields extend gradients to certain nonsmooth objectives and align with the directions produced by backpropagation~\cite{ref:bolte2020mathematical}. When each $J^{(i)}$ admits a conservative field $D^{(i)}$, we define the aggregate field
\[
D_J(\phi) \Let \frac{1}{N}\operatorname{conv}\bigl(\sum_{i\in[N]} D^{(i)}(\phi)\bigr),
\]
where $\sum$ denotes the Minkowski sum of sets. Following \cite[Section~6.1]{ref:bolte2021conservative}, $D_J$ is a conservative field for $J$.

To prove Theorem~\ref{thm:convergence}, we establish that each per-sample objective $J^{(i)}$ in~\eqref{eq:Ji-def} is definable and locally Lipschitz, then construct a set-valued mapping that serves as a conservative field for $J^{(i)}$. The convergence statement follows from the stochastic approximation results of \cite{ref:bolte2021conservative}.

Throughout, fix a sample index $i\in[N]$ and write the trainable parameters as $\phi=(\vartheta,\theta,\psi)\in\R^p$. Let $\mathcal Z^{(i)}\Let\{z\in\{0,1\}^{|\mathcal E|}:Az=b^{(i)}\}$ denote the feasible depot-to-demand paths for sample $i$. For a parameter vector $\phi$, define the shortest-path minimizer set $\mathcal Z_{\mathcal T^{(i)}}^\phi \Let \argmin_{z\in \mathcal Z^{(i)}}(\hat c_{\mathcal T^{(i)}}^\phi)^\top z$, where $\hat c_{\mathcal T^{(i)}}^\phi$ is the predicted edge-travel-time vector as in~\eqref{eq:c-map}--\eqref{eq:c-def}. Since $\mathcal Z^{(i)}$ is a nonempty finite subset of $\{0,1\}^{|\mathcal E|}$, the set $\mathcal Z_{\mathcal T^{(i)}}^\phi$ is nonempty and finite for every $\phi$.

We define the candidate conservative-field mapping for $J^{(i)}$:
\begin{equation}\label{eq:CF}
\operatorname{CF}^{(i)}(\phi)
\Let \nabla_1\ell\bigl(g^{(i)}(\phi),t^{(i)}\bigr)\cdot
\operatorname{conv}\Bigl\{
\bigl(\nabla_\phi \hat c_{\mathcal T^{(i)}}^\phi\bigr)^\top z:\ z\in\mathcal Z_{\mathcal T^{(i)}}^\phi
\Bigr\}
+\beta \nabla_\phi \mathcal R(\vartheta),
\end{equation}
where $g^{(i)}$ is defined in~\eqref{eq:gi-def} and $\nabla_1\ell$ denotes the partial derivative of $\ell$ with respect to its first argument.
Here and below, $\nabla_\phi\mathcal R(\vartheta)$ denotes the gradient of the map $\phi=(\vartheta,\theta,\psi)\mapsto\mathcal R(\vartheta)$; equivalently, its $\theta$- and $\psi$-components are zero because $\mathcal R$ depends only on $\vartheta$.

Since $\mathcal Z^{(i)}$ is finite, enumerate it as $\{z_1,\dots,z_{K^{(i)}}\}$. Set $F_k(\phi)\Let(\hat c_{\mathcal T^{(i)}}^\phi)^\top z_k$ and $a_k(\phi)\Let(\nabla_\phi\hat c_{\mathcal T^{(i)}}^\phi)^\top z_k$ for $k\in[K^{(i)}]$. Then $g^{(i)}(\phi)=\min_{k\in[K^{(i)}]}F_k(\phi)$ and $\mathcal Z_{\mathcal T^{(i)}}^\phi=\{z_k:F_k(\phi)=g^{(i)}(\phi)\}$. For later use, define $M(\phi)\Let\{k\in[K^{(i)}]:F_k(\phi)=g^{(i)}(\phi)\}$ and $S(\phi)\Let\operatorname{conv}\{a_k(\phi):k\in M(\phi)\}$.

We first prove the regularity of the per-sample objective and of the candidate field.

\begin{proposition}[Definability and local Lipschitz continuity of $J^{(i)}$]\label{prop:locally-Lipschitz-Ji}
Under Assumptions~\ref{assmpt:smoothness-bounded}--\ref{assmpt:definable}, the function $J^{(i)}$ defined in~\eqref{eq:Ji-def} is definable and locally Lipschitz for any $i\in[N]$.
\end{proposition}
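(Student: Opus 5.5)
The plan is to decompose $J^{(i)}(\phi)=\ell\bigl(g^{(i)}(\phi),t^{(i)}\bigr)+\beta\mathcal R(\vartheta)$ into elementary pieces and use the stability of definability and of local Lipschitz continuity under composition, finite sums, and finite minima. The key structural fact, already set up above, is the representation $g^{(i)}(\phi)=\min_{k\in[K^{(i)}]}F_k(\phi)$ over the finite path set $\mathcal Z^{(i)}$, with $F_k(\phi)=(\hat c_{\mathcal T^{(i)}}^\phi)^\top z_k$. This reduces the nonsmooth lower-level value function to a finite minimum of functions that are as regular as the network itself.

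\textbf{Definability.} By Assumption~\ref{assmpt:definable}, the maps $\vartheta$, $\theta$, $\psi$ are definable, and so is $\phi\mapsto\phi(\mathcal T^{(i)})[e]=\psi(\vartheta[e],\theta(\mathcal T^{(i)}))$ for fixed $\mathcal T^{(i)}$, since composition preserves definability. The squared norm is polynomial, so each $\hat c_{\mathcal T^{(i)}}^\phi[e]=\|\phi(\mathcal T^{(i)})[e]\|_2^2$ is definable. Each $F_k$ is a finite linear combination of these, hence definable. The graph of $\min_k F_k$ is first-order definable from the graphs of the $F_k$, so $g^{(i)}$ is definable. Composing with the definable $x\mapsto\ell(x,t^{(i)})$ and adding the definable $\beta\mathcal R(\vartheta)$ (Assumption~\ref{assmpt:definable}) shows that $J^{(i)}$ is definable.

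\textbf{Local Lipschitz continuity.} By Assumption~\ref{assmpt:smoothness-bounded}(b), the activations are $C^1$, so $\phi\mapsto\phi(\mathcal T^{(i)})[e]$ is $C^1$ and each $F_k$ is $C^1$, hence locally Lipschitz. On any compact $U$, Assumption~\ref{assmpt:smoothness-bounded}(c) even gives the explicit bound $\|\nabla_\phi F_k\|\le 2|\mathcal E|\,L\sup_{U}\max_e\|\phi(\mathcal T^{(i)})[e]\|_2$, which is finite by continuity. A finite minimum of functions that are Lipschitz on $U$ with constant $L_U$ is itself Lipschitz on $U$ with constant $L_U$, via $|\min_k a_k-\min_k b_k|\le\max_k|a_k-b_k|$. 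So $g^{(i)}$ is locally Lipschitz and continuous, and $g^{(i)}(U)$ is compact. Because $x\mapsto\ell(x,t^{(i)})$ is $C^1$ (Assumption~\ref{assmpt:smoothness-bounded}(a)), it is Lipschitz on a compact neighborhood of $g^{(i)}(U)$, and hence $\ell(g^{(i)}(\cdot),t^{(i)})$ is Lipschitz on $U$. The regularizer $\mathcal R$ is a finite sum of squared norms of linear maps of $\vartheta$, so it is smooth and locally Lipschitz. Summing the two terms completes the argument.

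\textbf{Expected difficulty.} There is no deep obstacle here. The only care needed is the compactness bookkeeping: the Lipschitz constant of $\ell$ must be taken on the image $g^{(i)}(U)$, which requires continuity of $g^{(i)}$ first. One should also make sure the finite-minimum step is justified for both properties, namely definability via first-order formulas and Lipschitzness via the max-difference inequality.
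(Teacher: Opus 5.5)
Your proposal is correct and follows the paper's proof essentially step for step. Both use the finite-minimum representation $g^{(i)}=\min_k F_k$, closure of definability under composition and finite minima, $C^1$ branches combined with $|\min_k a_k-\min_k b_k|\le\max_k|a_k-b_k|$, Lipschitz continuity of $\ell(\cdot,t^{(i)})$ on the compact image $g^{(i)}(U)$, and a $C^1$ regularizer.

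Two minor remarks on your version. First, the paper takes $U$ compact and convex, so the gradient bound converts directly into a Lipschitz constant by the mean-value theorem; your explicit bound needs the same choice of $U$ to serve as that constant. Second, $\vartheta[e]$ is the output of the static network rather than a linear function of its parameters, so $\mathcal R$ should be argued to be $C^1$ as a finite sum of $C^1$ compositions, as the paper does, rather than as a sum of squared norms of linear maps.
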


\begin{proof}[Proof of Proposition~\ref{prop:locally-Lipschitz-Ji}]
Under Assumption~\ref{assmpt:definable}, each branch $F_k$ is definable, and hence $g^{(i)}=\min_{k\in[K^{(i)}]}F_k$ is definable. The loss and regularizer are definable by Assumption~\ref{assmpt:definable}; hence, the per-sample objective $J^{(i)}$ is definable.

It remains to verify local Lipschitz continuity. Fix a compact convex neighborhood $U\subset\R^p$. By Assumption~\ref{assmpt:smoothness-bounded}(b)--(c), each branch $F_k$ is continuously differentiable on a neighborhood of $U$. Hence $\nabla F_k$ is continuous on $U$ and is therefore bounded on $U$. Because $K^{(i)}<+\infty$, there is a common constant $L_F<+\infty$ such that
\[
\sup_{\phi\in U}\|\nabla F_k(\phi)\|_2\le L_F
\qquad \forall k\in[K^{(i)}].
\]
By the mean-value theorem on the convex set $U$,
\[
|F_k(\phi)-F_k(\phi')|\le L_F\|\phi-\phi'\|_2
\qquad \forall \phi,\phi'\in U,\ \forall k\in[K^{(i)}].
\]
For finite minima,
\[
\bigl|\min_k F_k(\phi)-\min_k F_k(\phi')\bigr|
\le \max_k |F_k(\phi)-F_k(\phi')|,
\]
so $g^{(i)}$ is Lipschitz on $U$. Since $g^{(i)}(U)$ is compact and $x\mapsto \ell(x,t^{(i)})$ is $C^1$, the composition $\phi\mapsto \ell(g^{(i)}(\phi),t^{(i)})$ is Lipschitz on $U$. Finally, the regularizer $\mathcal R$ defined in~\eqref{eq:regularization} is a finite sum of squared norms of differences of the edge embeddings, which is $C^1$ in $\vartheta$ because it is a finite sum of $C^1$ compositions. Therefore $\phi\mapsto\mathcal R(\vartheta)$ is $C^1$ on a neighborhood of $U$, and hence Lipschitz on $U$. Thus $J^{(i)}$ is Lipschitz on every compact neighborhood $U$, and hence locally Lipschitz.
\end{proof}

\begin{proposition}[Definability, closedness, and local boundedness of $\operatorname{CF}^{(i)}$]\label{prop:locally-bounded-CF}
Under Assumptions~\ref{assmpt:smoothness-bounded}--\ref{assmpt:definable}, the set-valued mapping $\operatorname{CF}^{(i)}$ defined in~\eqref{eq:CF} has nonempty compact convex values, is locally bounded, and has a definable and closed graph for any sample $i\in[N]$.
\end{proposition}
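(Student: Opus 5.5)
We prove the four properties of $\operatorname{CF}^{(i)}$ one at a time, using the finite-branch notation introduced above. With $F_k$, $a_k$, $M(\phi)$ and $S(\phi)$ as defined, we have
\[
\operatorname{CF}^{(i)}(\phi)=\nabla_1\ell\bigl(g^{(i)}(\phi),t^{(i)}\bigr)\, S(\phi)+\beta\nabla_\phi\mathcal R(\vartheta).
\]
Every term is built from finitely many smooth maps and the active index set $M(\phi)$.

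\textbf{Nonempty compact convex values.} $M(\phi)$ is nonempty because the minimum over the finite set $\mathcal Z^{(i)}$ is attained. Hence $S(\phi)$ is the convex hull of finitely many vectors, which is a nonempty compact convex polytope. Multiplying by the scalar $\nabla_1\ell(g^{(i)}(\phi),t^{(i)})$ and translating by the vector $\beta\nabla_\phi\mathcal R(\vartheta)$ preserves all three properties.

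\textbf{Local boundedness.} Fix a compact $U$. By Assumption~\ref{assmpt:smoothness-bounded}(b)--(c), each $a_k$ is continuous on $U$; for instance $a_k(\phi)=\sum_e z_k[e]\,2(\nabla_\phi\phi(\mathcal T^{(i)})[e])^\top\phi(\mathcal T^{(i)})[e]$. So there is a common bound $\|a_k\|\le L_a$ for all $k$, since there are finitely many. The map $g^{(i)}$ is continuous (shown in the proof of Proposition~\ref{prop:locally-Lipschitz-Ji}), so $g^{(i)}(U)$ is compact. As $\nabla_1\ell$ is continuous, the scalar factor is bounded on $U$. The term $\nabla\mathcal R$ is continuous, hence bounded on $U$.

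\textbf{Closed graph.} Take $\phi_n\to\phi$ and $v_n\in\operatorname{CF}^{(i)}(\phi_n)$ with $v_n\to v$. The key step is outer semicontinuity of the active set: $M(\phi_n)\subseteq M(\phi)$ for all large $n$. To see this, let $k\notin M(\phi)$. Then $F_k(\phi)>g^{(i)}(\phi)$, and by continuity of $F_k$ and $g^{(i)}$ the strict inequality persists near $\phi$. Write $v_n=s_n\sum_{k}\mu_{n,k}a_k(\phi_n)+\beta\nabla\mathcal R(\vartheta_n)$, where the weights $\mu_{n,k}$ lie in the simplex and are supported in $M(\phi_n)\subseteq M(\phi)$. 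Pass to a subsequence along which $\mu_n\to\mu$; the limit $\mu$ stays in the simplex and is supported in $M(\phi)$. By continuity, $s_n\to\nabla_1\ell(g^{(i)}(\phi),t^{(i)})$, $a_k(\phi_n)\to a_k(\phi)$, and $\nabla\mathcal R(\vartheta_n)\to\nabla\mathcal R(\vartheta)$. Therefore $v\in\operatorname{CF}^{(i)}(\phi)$.

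\textbf{Definability.} By Assumption~\ref{assmpt:definable}, the functions $F_k$, $g^{(i)}$, $a_k$, $\nabla_1\ell$ and $\nabla\mathcal R$ are all definable. Derivatives of definable $C^1$ maps are definable, since they are first-order expressible as limits of difference quotients. The graph can then be written as
\[
\Bigl\{(\phi,v): \exists\mu\in\Delta_{K^{(i)}},\ \textstyle\bigwedge_k\bigl(\mu_k=0\ \vee\ F_k(\phi)=g^{(i)}(\phi)\bigr),\ v=\nabla_1\ell(g^{(i)}(\phi),t^{(i)})\sum_k\mu_k a_k(\phi)+\beta\nabla\mathcal R(\vartheta)\Bigr\}.
\]
This is a finite Boolean combination and projection of definable sets, and such sets are definable by o-minimality. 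The main obstacle is the closed-graph step, and the active-set inclusion argument above is exactly what handles it.
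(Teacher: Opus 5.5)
Your proposal is correct and follows essentially the same route as the paper: the decomposition $\operatorname{CF}^{(i)}(\phi)=s(\phi)S(\phi)+b(\phi)$, finiteness of the branch set to get compact convex values and local boundedness, a projection-of-a-definable-set representation of the graph using simplex weights (your disjunction $\mu_k=0\vee F_k(\phi)=g^{(i)}(\phi)$ plays the role of the paper's complementarity $\lambda_k(F_k(\phi)-g^{(i)}(\phi))=0$), and a subsequence argument on the weights for closedness. The only difference is cosmetic, in the closed-graph step: you use the eventual inclusion $M(\phi_n)\subseteq M(\phi)$, whereas the paper argues that any index with positive limit weight is eventually active and hence active at $\phi$; both rest on the same continuity of $F_k$ and $g^{(i)}$.
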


\begin{proof}[Proof of Proposition~\ref{prop:locally-bounded-CF}]
Recall the definitions of $M(\phi)$ and $S(\phi)$ above. Define
\[
s(\phi)\Let\nabla_1\ell(g^{(i)}(\phi),t^{(i)}),
\qquad
b(\phi)\Let\beta\nabla_\phi\mathcal R(\vartheta),
\]
where $\vartheta$ denotes the $\vartheta$-component of $\phi$. Then
$\operatorname{CF}^{(i)}(\phi)=s(\phi)S(\phi)+b(\phi)$.

The active set $M(\phi)$ is nonempty because $K^{(i)}<+\infty$, and hence $S(\phi)$ is the convex hull of finitely many vectors. Therefore $S(\phi)$, and consequently $\operatorname{CF}^{(i)}(\phi)$, has nonempty compact convex values.

Because $\phi\mapsto \hat c_{\mathcal T^{(i)}}^\phi$, $x\mapsto \ell(x,t^{(i)})$, and $\mathcal R$ are definable and $C^1$, their derivatives are definable; hence each $a_k$, as well as $s$ and $b$, is definable. We next prove graph definability. Since the functions $F_k$, $g^{(i)}$, $a_k$, $s$, and $b$ are definable, the graph of $\operatorname{CF}^{(i)}$ admits the explicit representation
\[
\operatorname{graph}\operatorname{CF}^{(i)}
= \bigl\{
(\phi,d):
\begin{array}{l}
\exists \lambda\in\Delta_{K^{(i)}} \text{ s.t. }
d=s(\phi)\displaystyle\sum_{k=1}^{K^{(i)}}\lambda_k a_k(\phi)+b(\phi),\,
\lambda_k\bigl(F_k(\phi)-g^{(i)}(\phi)\bigr)=0
\quad \forall k\in[K^{(i)}]
\end{array}
\bigr\},
\]
where 
$\Delta_{K^{(i)}} = 
\bigl\{\lambda\in\R_+^{K^{(i)}}:\sum_{k=1}^{K^{(i)}}\lambda_k=1\bigr\}$ is a simplex. 
This is the projection of a definable set in the variables $(\phi,d,\lambda)$, so $\operatorname{graph}\operatorname{CF}^{(i)}$ is definable.

We now prove closedness. Suppose $\phi_n\to\phi$, $d_n\to d$, and $d_n\in\operatorname{CF}^{(i)}(\phi_n)$. Then $d_n=s(\phi_n)v_n+b(\phi_n)$ for some $v_n\in S(\phi_n)$. Choose $\lambda_n\in\Delta_{K^{(i)}}$ such that
\[
v_n=\sum_{k=1}^{K^{(i)}}\lambda_{n,k}a_k(\phi_n),
\qquad
\lambda_{n,k}=0\quad\text{whenever }k\notin M(\phi_n).
\]
Passing to a subsequence if necessary, $\lambda_n\to\lambda\in\Delta_{K^{(i)}}$. Assumption~\ref{assmpt:smoothness-bounded}(b) implies that the mapping $\phi\mapsto\nabla_\phi \hat c_{\mathcal T^{(i)}}^\phi$ is continuous, and hence $a_k(\phi_n)\to a_k(\phi)$ for every $k$. Thus
\[
v_n\to v\Let\sum_{k=1}^{K^{(i)}}\lambda_k a_k(\phi).
\]
If $\lambda_k>0$, then $\lambda_{n,k}>0$ for all sufficiently large $n$, so $k\in M(\phi_n)$ for all sufficiently large $n$. Since each $F_k$ and $g^{(i)}$ is continuous, $F_k(\phi_n)=g^{(i)}(\phi_n)$ eventually implies $F_k(\phi)=g^{(i)}(\phi)$, so $k\in M(\phi)$. Hence $v\in S(\phi)$. The functions $s$ and $b$ are continuous, so 
$d=\lim_n d_n=s(\phi)v+b(\phi)\in\operatorname{CF}^{(i)}(\phi)$. 
The graph is closed.

Finally, local boundedness follows from the same finite representation. On any compact set $U$, the finitely many continuous maps $a_k$, $s$, and $b$ are bounded. Hence $S(\phi)$ and $s(\phi)S(\phi)+b(\phi)=\operatorname{CF}^{(i)}(\phi)$ are uniformly bounded over $\phi\in U$.
\end{proof}

We now verify that $\operatorname{CF}^{(i)}$ is conservative.

\begin{theorem}[$\operatorname{CF}^{(i)}$ is conservative]\label{thm:conservative-field}
The mapping $\operatorname{CF}^{(i)}$ defined in~\eqref{eq:CF} is a conservative field for $J^{(i)}$. In particular, $J^{(i)}$ admits a conservative field.
\end{theorem}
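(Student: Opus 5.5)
We verify the defining path-integral identity for $\operatorname{CF}^{(i)}$ directly, or equivalently, by the characterization in \cite{ref:bolte2021conservative}, show that for every absolutely continuous $\gamma:[0,1]\to\R^p$ and almost every $t$,
\[
\frac{\dd}{\dd t}J^{(i)}(\gamma(t))=\inner{v}{\dot\gamma(t)}\quad\text{for all } v\in\operatorname{CF}^{(i)}(\gamma(t)).
\]
Integrating this chain-rule identity gives the equality in Definition~\ref{def:conservative-field}. The maximum over $v$ is then trivially the common value. Proposition~\ref{prop:locally-bounded-CF} already supplies a closed graph with nonempty compact values, and Proposition~\ref{prop:locally-Lipschitz-Ji} supplies local Lipschitz continuity. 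So only the chain rule remains.

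First, we reduce to the lower-level value $g^{(i)}=\min_k F_k$. Since $\ell(\cdot,t^{(i)})$ and $\mathcal R$ are $C^1$, and $t\mapsto g^{(i)}(\gamma(t))$ is absolutely continuous (a Lipschitz function composed with an AC path), at a.e.\ $t$ we have
\[
\tfrac{\dd}{\dd t}J^{(i)}(\gamma(t))=s(\gamma(t))\,\tfrac{\dd}{\dd t}g^{(i)}(\gamma(t))+\inner{b(\gamma(t))}{\dot\gamma(t)}.
\]
It therefore suffices to show that, for a.e.\ $t$, $\tfrac{\dd}{\dd t}g^{(i)}(\gamma(t))=\inner{a_k(\gamma(t))}{\dot\gamma(t)}$ for every active index $k\in M(\gamma(t))$. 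Linearity then extends this identity to the whole convex hull $S(\gamma(t))$.

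This is the key step, and it is a standard finite-min argument. Let $T_0$ be the full-measure set of $t$ at which $\gamma$, each $F_k\circ\gamma$, and $g^{(i)}\circ\gamma$ are all differentiable. By the ordinary chain rule, $F_k\circ\gamma$ is differentiable wherever $\gamma$ is, with derivative $\inner{a_k}{\dot\gamma}$. Fix $t\in T_0$ and an active index $k$. Then $h_k\Let F_k\circ\gamma-g^{(i)}\circ\gamma\ge0$ on $[0,1]$ and $h_k(t)=0$, so $t$ is a minimizer of the differentiable-at-$t$ function $h_k$. At an interior point $t\in(0,1)$ this forces $h_k'(t)=0$, which gives $\tfrac{\dd}{\dd t}g^{(i)}(\gamma(t))=\inner{a_k(\gamma(t))}{\dot\gamma(t)}$. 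The endpoints $t\in\{0,1\}$ form a null set, so they can be ignored.

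The main obstacle is only bookkeeping for the null sets: we must intersect finitely many full-measure sets, one for each $k\in[K^{(i)}]$, for $\gamma$ itself, and for $g^{(i)}\circ\gamma$. Finiteness of $K^{(i)}$ makes this immediate.

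Combining the steps, every $v=s\,u+b$ with $u\in S(\gamma(t))$ satisfies $\inner{v}{\dot\gamma(t)}=\tfrac{\dd}{\dd t}J^{(i)}(\gamma(t))$ for a.e.\ $t$. Integrating over $[0,1]$ yields the defining identity, so $\operatorname{CF}^{(i)}$ is conservative for $J^{(i)}$.
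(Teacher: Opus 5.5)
Your proof is correct, and its skeleton matches the paper's. Both arguments reduce to the chain-rule characterization in \cite[Lemma~2]{ref:bolte2021conservative}, using Propositions~\ref{prop:locally-Lipschitz-Ji}--\ref{prop:locally-bounded-CF} for the regularity hypotheses. Both peel off the $C^1$ loss and regularizer, show that every active branch satisfies $\tfrac{\dd}{\dd t}(F_k\circ\gamma)(t)=\tfrac{\dd}{\dd t}(g^{(i)}\circ\gamma)(t)$ for almost every $t$, and extend this to $S(\gamma(t))$ by linearity.

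The two proofs differ in the key step. The paper derives one-sided derivative formulas for the finite minimum: the right derivative of $g^{(i)}\circ\gamma$ is the minimum of the active branch derivatives, and the left derivative is the maximum. This requires showing that inactive branches stay strictly above under small perturbations and controlling a uniform $o(|q|)$ remainder. At a point where $g^{(i)}\circ\gamma$ is differentiable, the minimum and maximum coincide, which forces all active derivatives to agree.

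You instead apply Fermat's rule to $h_k=F_k\circ\gamma-g^{(i)}\circ\gamma$. This function is nonnegative on $[0,1]$, vanishes at $t$, and is differentiable there, so $h_k'(t)=0$. Your route is shorter and more elementary, since it needs neither the continuity argument for inactive branches nor the uniform remainder. The paper's route gives more: its one-sided derivative formulas hold wherever all branches are differentiable, even where $g^{(i)}\circ\gamma$ is not. That extra information is not needed for this theorem.
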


\begin{proof}[Proof of Theorem~\ref{thm:conservative-field}]
Let $\chi:[0,1]\to\R^p$ be any absolutely continuous curve. Define
\[
F_{k,\chi}(\sigma)\Let F_k(\chi(\sigma)) \qquad \text{for each } k\in[K^{(i)}].
\]
Each $F_{k,\chi}$ is absolutely continuous. Since the finite minimum of absolutely continuous functions is absolutely continuous, the function
\[
g_\chi(\sigma)\Let g^{(i)}(\chi(\sigma))
=\min_{k\in[K^{(i)}]}F_{k,\chi}(\sigma)
\]
is also absolutely continuous.
For almost every $\sigma\in(0,1)$, the derivatives of $\chi$, all $F_{k,\chi}$, and $g_\chi$ exist. Fix such a $\sigma$. We claim that the one-dimensional finite-minimum formula gives
\begin{equation}
\frac{\dd^+}{\dd\sigma}g_\chi(\sigma)
= \min_{k\in M(\chi(\sigma))} \frac{\dd}{\dd\sigma}F_{k,\chi}(\sigma),
\qquad
\frac{\dd^-}{\dd\sigma}g_\chi(\sigma)
= \max_{k\in M(\chi(\sigma))} \frac{\dd}{\dd\sigma}F_{k,\chi}(\sigma).
\label{eq:derivative-g-chi}
\end{equation}
Indeed, if $j\notin M(\chi(\sigma))$, then $F_{j,\chi}(\sigma)>g_\chi(\sigma)$.
By continuity and finiteness, all inactive branches remain strictly above the active branches for sufficiently small one-sided perturbations of $\sigma$. Thus only indices in $M(\chi(\sigma))$ can attain the minimum locally. For each $k\in M(\chi(\sigma))$, differentiability at $\sigma$ gives
\[
F_{k,\chi}(\sigma+q)
= g_\chi(\sigma) + q\frac{\dd}{\dd\sigma}F_{k,\chi}(\sigma) + r_k(q),
\qquad \frac{r_k(q)}{|q|}\to 0
\quad\text{as }q\to 0.
\]
Because $M(\chi(\sigma))$ is finite, the remainder can be made uniform: let $r(q)\Let \max_{k\in M(\chi(\sigma))}|r_k(q)|$, then $r(q)=o(|q|)$ as $q\to0$. Since only indices in $M(\chi(\sigma))$ can attain the minimum for all sufficiently small one-sided perturbations of $\sigma$, we obtain, for all sufficiently small $q>0$,
\[
q\min_{k\in M(\chi(\sigma))}\frac{\dd}{\dd\sigma}F_{k,\chi}(\sigma)-r(q)
\le g_\chi(\sigma+q)-g_\chi(\sigma)
\le q\min_{k\in M(\chi(\sigma))}\frac{\dd}{\dd\sigma}F_{k,\chi}(\sigma)+r(q),
\]
and, for all sufficiently small $q<0$,
\[
q\max_{k\in M(\chi(\sigma))}\frac{\dd}{\dd\sigma}F_{k,\chi}(\sigma)-r(q)
\le g_\chi(\sigma+q)-g_\chi(\sigma)
\le q\max_{k\in M(\chi(\sigma))}\frac{\dd}{\dd\sigma}F_{k,\chi}(\sigma)+r(q).
\]
Dividing by $q$ and letting $q\downarrow0$ or $q\uparrow0$ yields~\eqref{eq:derivative-g-chi}. Whenever $g_\chi$ is differentiable at $\sigma$, the left and right derivatives coincide. Therefore,
\[
\frac{\dd}{\dd\sigma}F_{k,\chi}(\sigma)
= \frac{\dd}{\dd\sigma}(g^{(i)}\circ\chi)(\sigma)
\qquad \forall k\in M(\chi(\sigma)).
\]
Since $F_k$ is $C^1$, $\frac{\dd}{\dd\sigma}F_{k,\chi}(\sigma) = \left\langle a_k(\chi(\sigma)),\chi'(\sigma)\right\rangle$.
Therefore, for every $v\in S(\chi(\sigma))$,
\[
\langle v,\chi'(\sigma)\rangle
= \frac{\dd}{\dd\sigma}(g^{(i)}\circ\chi)(\sigma) \qquad\text{for a.e. }\sigma\in[0,1].
\]

Now take any $d\in\operatorname{CF}^{(i)}(\chi(\sigma))$. By definition, there exists $v\in S(\chi(\sigma))$ such that
\[
d = \nabla_1\ell(g^{(i)}(\chi(\sigma)),t^{(i)})v + \beta\nabla_\phi\mathcal R(\vartheta_\chi(\sigma)),
\]
where $\vartheta_\chi(\sigma)$ denotes the $\vartheta$-component of $\chi(\sigma)$. For almost every $\sigma\in[0,1]$,
\begin{align*}
\langle d,\chi'(\sigma)\rangle
&= \nabla_1\ell(g^{(i)}(\chi(\sigma)),t^{(i)}) \frac{\dd}{\dd\sigma}(g^{(i)}\circ\chi)(\sigma)
+ \beta \left\langle \nabla_\phi\mathcal R(\vartheta_\chi(\sigma)),\chi'(\sigma) \right\rangle\\
&= \frac{\dd}{\dd\sigma} \bigl[ \ell(g^{(i)}(\chi(\sigma)),t^{(i)}) + \beta\mathcal R(\vartheta_\chi(\sigma)) \bigr]
= \frac{\dd}{\dd\sigma}(J^{(i)}\circ\chi)(\sigma),
\end{align*}
where the second equality uses the ordinary chain rule for the $C^1$ loss and the $C^1$ regularizer.

Proposition~\ref{prop:locally-Lipschitz-Ji} shows that $J^{(i)}$ is locally Lipschitz, and Proposition~\ref{prop:locally-bounded-CF} shows that $\operatorname{CF}^{(i)}$ has nonempty compact values, is locally bounded, and has a closed graph. The chain-rule characterization of conservative fields in \cite[Lemma~2]{ref:bolte2021conservative} therefore implies that $\operatorname{CF}^{(i)}$ is a conservative field for $J^{(i)}$.
\end{proof}

We now complete the convergence proof.

\begin{proof}[Proof of Theorem~\ref{thm:convergence}]
Set $r_k=0$ for every iteration $k$ in Algorithm~\ref{alg:conservative-sgd}, so that $\tilde\phi_k=\phi_k$ for every iteration $k$.
For each sample index $i\in[N]$, let 
\[
z_k^{(i)}\in\argmin_{z\in\mathcal Z^{(i)}} (\hat c_{\mathcal T^{(i)}}^{\phi_k})^\top z
= \mathcal Z_{\mathcal T^{(i)}}^{\phi_k}
\]
be the shortest path selected by the deterministic order $\prec$ when ties occur.
Since $z_k^{(i)}$ is optimal, $(\hat c_{\mathcal T^{(i)}}^{\phi_k})^\top z_k^{(i)}=g^{(i)}(\phi_k)$. Define the conservative-field direction with the selected shortest path by
\[
\bar d_k^{(i)} 
\Let \nabla_1\ell\bigl(g^{(i)}(\phi_k),t^{(i)}\bigr)\,\bigl(\nabla_\phi \hat c_{\mathcal T^{(i)}}^{\phi_k}\bigr)^\top z_k^{(i)} + \beta \nabla_\phi \mathcal R(\vartheta_k).
\]
Then $\bar d_k^{(i)}\in \operatorname{CF}^{(i)}(\phi_k)$ for all $i\in[N]$,
because $z_k^{(i)}\in\mathcal Z_{\mathcal T^{(i)}}^{\phi_k}$ and $\operatorname{CF}^{(i)}(\phi_k)$ is defined via the convex hull over all minimizers; thus, deterministic tie-breaking does not affect this inclusion.
For every sampled index $i\in B_k$, the per-sample direction used by the algorithm is exactly $d_k^{(i)}=\bar d_k^{(i)}$.
Hence averaging within the mini-batch yields
\[
d_k\Let \frac{1}{|B_k|}\sum_{i\in B_k} d_k^{(i)}
= \frac{1}{|B_k|}\sum_{i\in B_k} \bar d_k^{(i)}
\in \frac{1}{|B_k|}\sum_{i\in B_k}\operatorname{CF}^{(i)}(\phi_k).
\]
Here $\sum_{i\in B_k}\operatorname{CF}^{(i)}(\phi_k)$ denotes the Minkowski sum of the sets $\{\operatorname{CF}^{(i)}(\phi_k)\}_{i\in B_k}$.

Define the aggregate conservative-field mapping for the empirical objective by
\[
D_J(\phi)\Let \frac{1}{N}\operatorname{conv}\bigl(\sum_{i=1}^N \operatorname{CF}^{(i)}(\phi)\bigr).
\]
By Theorem~\ref{thm:conservative-field}, for each sample $i\in[N]$ the mapping $\operatorname{CF}^{(i)}$ is a conservative field for $J^{(i)}$. Proposition~\ref{prop:locally-Lipschitz-Ji} shows that $J^{(i)}$ is definable and locally Lipschitz. Proposition~\ref{prop:locally-bounded-CF} further shows that $\operatorname{CF}^{(i)}$ has nonempty compact convex values, is locally bounded, and has a definable and closed graph. Thus the structural assumptions of \cite[Theorem~9]{ref:bolte2021conservative} are satisfied with $f_i=J^{(i)}$ and $D_i=\operatorname{CF}^{(i)}$. It remains to verify that the stochastic recursion has conditional mean drift in the aggregate field $D_J$.

Assume that the mini-batches $\{B_k\}$ are drawn independently and uniformly at random among all subsets of $[N]$ of the prescribed batch size, and that the step sizes satisfy
$\sum_{k=0}^{+\infty}\alpha_k = +\infty$ and $\lim_{k\to+\infty}\alpha_k\log(k+2)=0$.
Let $\mathcal F_k$ denote the sigma-field generated by the algorithmic randomness up to iteration $k-1$. Since $\phi_k$ is $\mathcal F_k$-measurable and the tie-breaking rule $\prec$ is deterministic, each $\bar d_k^{(i)}$ is $\mathcal F_k$-measurable. Moreover, by uniform mini-batch sampling,
\[
\mathbb E[d_k\mid \mathcal F_k]
= \frac{1}{N}\sum_{i=1}^N \bar d_k^{(i)}
\in \frac{1}{N}\sum_{i=1}^N \operatorname{CF}^{(i)}(\phi_k)
\subseteq D_J(\phi_k).
\]
Therefore the recursion $\phi_{k+1}=\phi_k-\alpha_k d_k$ is an instance of the mini-batch conservative-field stochastic approximation recursion considered in \cite[Theorem~9]{ref:bolte2021conservative}.
Let $\mathcal B\Let\{\sup_k \|\phi_k\|_2< +\infty\}$ be the event that the iterates remain bounded.
On $\mathcal B$, the iterates lie in a compact set on which Assumptions~\ref{assmpt:smoothness-bounded}--\ref{assmpt:definable} hold and the regularity properties
required by \cite[Theorem~9]{ref:bolte2021conservative} are satisfied.
Therefore, \cite[Theorem~9]{ref:bolte2021conservative} implies that, almost surely on $\mathcal B$, the sequence $\{\phi_k\}$ has a nonempty accumulation set, every accumulation point $\bar\phi$ satisfies $0\in D_J(\bar\phi)$,
and $J$ is constant on this accumulation set.
This completes the proof.
\end{proof}

\subsection{Finite-Time $(\delta,\varepsilon)$-Stationarity via Parameter-Space Randomized Smoothing}
\label{ec:finite-time-proof}

Theorem~\ref{thm:convergence} treats the unperturbed recursion ($r_k\equiv 0$) and yields the asymptotic stationarity condition $0\in D_J(\bar\phi)$ for every accumulation point $\bar\phi$. We now add a complementary finite-time guarantee at a prescribed spatial scale for the perturbed recursion with a deterministic nonincreasing radius schedule $\{r_k\}_{k\in\mathbb N}$. Let 
$\mathbb{B} \Let \{u\in\R^p: \|u\|_2 \leq 1\}$.
We set $\delta \Let 2r_0$ and work with the parameter-smoothed objectives
\begin{equation}
\hat J_r(\phi) \Let \mathbb{E}_{u\sim \mathrm{Unif}(\mathbb{B})}\big[J(\phi+r u)\big].
\label{eq:Jhat}
\end{equation}
The perturbation acts only in parameter space, so the lower-level shortest-path problems are still solved exactly (cf.\ Remark~\ref{rem:smoothing}).

We first recall the definition of $(\delta,\varepsilon)$-stationarity~\cite[Definitions~4--5]{ref:cutkosky2023optimal}.
\begin{definition}[$(\delta,\varepsilon)$-stationarity at scale $\delta$]
\label{def:goldstein}
Let $F:\R^p\to\R$ be almost everywhere differentiable, and define the differentiability domain of $F$ by
\[
\mathcal D_F \Let \operatorname{dom}(\nabla F)=\{x\in\R^p:\nabla F(x)\ \text{exists}\}.
\]
For $\delta>0$, define the stationarity measure
\[
\|\nabla F(x)\|_{\delta}
\Let
\inf\Bigl\{
\bigl\|\frac{1}{|S|}\sum_{y\in S}\nabla F(y)\bigr\|_2:
S\subseteq \mathbb{B}(x,\delta)\cap \mathcal D_F,\; 0<|S|<+\infty,\; x=\frac{1}{|S|}\sum_{y\in S}y
\Bigr\},
\]
where $\mathbb{B}(x,\delta)\Let x+\delta\mathbb B$ is the closed Euclidean ball of radius $\delta$ centered at $x$.
We say that $x$ is $(\delta,\varepsilon)$-stationary for $F$ if $\|\nabla F(x)\|_{\delta}\le \varepsilon$.
\end{definition}

\begin{assumption}[Localized assumptions for randomized smoothing]
\label{ass:query-region}
Fix a deterministic nonincreasing sequence $\{r_k\}_{k\in\mathbb N}\subset(0,+\infty)$ and set $\delta = 2r_0$. Assume the following.
\begin{enumerate}[label=(\alph*),leftmargin=7mm]
\item \textbf{Compact convex iterate region.}
There exists a deterministic compact convex set $\mathcal U\subset\R^p$ such that $\phi_k\in \mathcal U$ almost surely for all $k$.

\item \textbf{Lipschitz objective on the expanded region.}
The objective $J$ is $L_J$-Lipschitz on the expanded region
$V = \mathcal U + 2\delta \mathbb B = \mathcal U + 4r_0\mathbb B$.

\item \textbf{Uniform conditional second-moment bound.}
The directions $d_k$ produced by Algorithm~\ref{alg:conservative-sgd} satisfy $\mathbb{E}\big[\|d_k\|_2^2 \,\big|\, \phi_k\big] \le D$ almost surely for all $k$.

\item \textbf{Lower bounded smoothed objectives.}
For every $k\in\mathbb N$, the smoothed objective $\hat J_{r_k}$ is bounded below on $\R^p$, i.e., 
$\inf_{\phi\in\R^p}\hat J_{r_k}(\phi)>-\infty$.

\item \textbf{Conditional unbiasedness of perturbation and mini-batching.}
For every $k$:
\begin{enumerate}[label=(\roman*),leftmargin=6mm]
\item conditional on $\phi_k$, the perturbation $u_k$ is distributed uniformly on $\mathbb B$;
\item the mini-batch $B_k$ is almost surely nonempty;
\item for every deterministic collection of vectors $\{v_i\}_{i=1}^N\subset\R^p$,
\[
\mathbb{E} \bigl[\frac1{|B_k|}\sum_{i\in B_k}v_i \,\mid\, \phi_k,u_k\bigr]
= \frac1N\sum_{i=1}^N v_i
\qquad\text{almost surely.}
\]
\end{enumerate}

\end{enumerate}
\end{assumption}

Fix a deterministic tie-breaking order $\prec$.
For each sample $i\in\{1,\dots,N\}$ and parameter vector $\phi\in\R^p$, let $z^{(i)}_{\prec}(\phi)$ denote the $\prec$-selected minimizer of the shortest-path problem in \eqref{eq:gi-def} under costs $\hat c_{\mathcal T^{(i)}}^\phi$. Define the associated backpropagation direction
\[
G^{(i)}_{\prec}(\phi)
\Let \nabla_1\ell\bigl((\hat c_{\mathcal T^{(i)}}^\phi)^\top z^{(i)}_{\prec}(\phi),\,t^{(i)}\bigr)\, \bigl(\nabla_\phi \hat c_{\mathcal T^{(i)}}^\phi\bigr)^\top z^{(i)}_{\prec}(\phi) +\beta \nabla_\phi \mathcal R(\vartheta).
\]

\begin{lemma}[Tie-breaking oracle agrees with gradients almost everywhere]
\label{lem:oracle-ae}
Assume Assumptions~\ref{assmpt:smoothness-bounded}--\ref{assmpt:definable}, and fix a deterministic tie-breaking order $\prec$.
Then there exists a full-measure set $\mathcal D_J\subseteq\R^p$ such that, for every $\phi\in\mathcal D_J$ and every $i\in\{1,\dots,N\}$,
$G^{(i)}_{\prec}(\phi)=\nabla J^{(i)}(\phi)$.
Consequently,
$\frac1N\sum_{i=1}^N G^{(i)}_{\prec}(\phi)=\nabla J(\phi)$ for all $\phi\in\mathcal D_J$.
\end{lemma}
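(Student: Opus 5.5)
The idea is to take $\mathcal D_J$ to be the set where \emph{every} branch comparison is either strict or degenerate in a harmless way, and to show this set has full measure using definability. Recall the finite enumeration $\mathcal Z^{(i)}=\{z_1,\dots,z_{K^{(i)}}\}$ with branches $F_k(\phi)=(\hat c^\phi_{\mathcal T^{(i)}})^\top z_k$, which are $C^1$ by Assumption~\ref{assmpt:smoothness-bounded}. For each pair $(k,k')$ and each sample $i$, consider the definable $C^1$ function $F_k-F_{k'}$. I will split $\R^p$ using these functions.

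\textbf{Step 1: a full-measure open set.} Define $\Omega$ as the set of $\phi$ such that, for every $i$ and every pair $k,k'$ in the active set $M(\phi)$, the difference $F_k-F_{k'}$ vanishes identically on a neighborhood of $\phi$. The complement of $\Omega$ is contained in the finite union, over $i$ and pairs, of the sets
\[
\{F_k=F_{k'}\}\setminus \operatorname{int}\{F_k=F_{k'}\}.
\]
Each zero set $\{F_k=F_{k'}\}$ is definable by Assumption~\ref{assmpt:definable}. The boundary of a definable set has dimension strictly smaller than $p$, by o-minimal dimension theory, and hence has Lebesgue measure zero. Therefore $\Omega$ has full measure.

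\textbf{Step 2: local structure of $g^{(i)}$ on $\Omega$.} Fix $\phi\in\Omega$. Inactive branches stay strictly above $g^{(i)}$ near $\phi$ by continuity. All active branches coincide on a neighborhood $W$ of $\phi$. Hence $g^{(i)}=F_k$ on $W$ for any $k\in M(\phi)$, so $g^{(i)}$ is $C^1$ near $\phi$ with $\nabla g^{(i)}(\phi)=\nabla F_k(\phi)=a_k(\phi)$. In particular $a_k(\phi)=a_{k'}(\phi)$ for all active $k,k'$.

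The $\prec$-selected path $z^{(i)}_\prec(\phi)$ is some active $z_k$. Consequently $G^{(i)}_\prec(\phi)$ equals
\[
\nabla_1\ell\bigl(g^{(i)}(\phi),t^{(i)}\bigr)\,\nabla g^{(i)}(\phi)+\beta\nabla_\phi\mathcal R(\vartheta),
\]
which is $\nabla J^{(i)}(\phi)$ by the ordinary chain rule, using that $\ell$ and $\mathcal R$ are $C^1$. Taking $\mathcal D_J\Let\Omega$ and averaging over $i$ gives the consequence $\frac1N\sum_i G^{(i)}_\prec=\nabla J$ on $\mathcal D_J$. Tie-breaking becomes irrelevant because all tied branches share a gradient there.

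\textbf{Main obstacle.} The delicate point is the measure-zero claim in Step 1. Ties $\{F_k=F_{k'}\}$ need not be thin: two distinct paths may have identical cost functions on open regions, for example when parameters make some embeddings vanish. So I cannot simply say the tie set is null; I must remove only its topological boundary. That is exactly where definability, rather than mere smoothness, is needed. With only $C^1$ functions the boundary of a zero set can have positive measure, whereas $\dim(\partial S)<\dim S\le p$ for definable $S$ rules this out.

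I would cite the standard o-minimal facts that definable sets admit finite $C^1$ stratifications and that the frontier has lower dimension. Then I would observe that a finite union of lower-dimensional definable sets is Lebesgue-null.
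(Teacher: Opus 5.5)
Your proof is correct, but it takes a different route from the paper. The paper reuses machinery it has already built. By Theorem~\ref{thm:conservative-field}, $\operatorname{CF}^{(i)}$ is a conservative field for $J^{(i)}$, and \cite[Theorem~1]{ref:bolte2021conservative} then gives a full-measure set on which $\operatorname{CF}^{(i)}(\phi)=\{\nabla J^{(i)}(\phi)\}$. Since the $\prec$-selected path is active, $G^{(i)}_\prec(\phi)\in\operatorname{CF}^{(i)}(\phi)$ for every $\phi$, and the paper intersects over $i$.

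You bypass conservative fields and construct the set explicitly. Your complement inclusion is right. An active pair at $\phi\notin\Omega$ places $\phi$ in $S\setminus\operatorname{int}S$ with $S=\{F_k=F_{k'}\}$. Moreover, $\dim(S\setminus\operatorname{int}S)<p$ for definable $S\subseteq\R^p$, because a $p$-dimensional definable set contains an open box, and that box would lie in $\operatorname{int}S$. Step~2 then correctly shows that tied branches share a gradient on $\Omega$.

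Your route buys extra structure. $\Omega$ is in fact open, since active sets are locally constant on it, so each $J^{(i)}$ is $C^1$ on an open, dense, full-measure set. The paper's route takes two lines given Theorem~\ref{thm:conservative-field} and needs no geometry of tie sets.

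One correction to your ``main obstacle'' paragraph: definability is needed for your specific $\Omega$, but not for the lemma. Take $\mathcal D_J$ to be the set where every $g^{(i)}$ is differentiable; it has full measure by Rademacher's theorem, since $g^{(i)}$ is locally Lipschitz. At such a $\phi$, for each active $k$ the function $F_k-g^{(i)}$ is nonnegative, vanishes at $\phi$, and is differentiable there, so $\nabla F_k(\phi)=\nabla g^{(i)}(\phi)$. The chain rule then gives $G^{(i)}_\prec(\phi)=\nabla J^{(i)}(\phi)$ under any tie-breaking rule. Likewise, the result the paper cites from \cite{ref:bolte2021conservative} does not require definability.
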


\begin{proof}[Proof of Lemma~\ref{lem:oracle-ae}]
Fix $i\in\{1,\dots,N\}$. By Theorem~\ref{thm:conservative-field}, the set-valued mapping $\operatorname{CF}^{(i)}$ is a conservative field for the locally Lipschitz function $J^{(i)}$. Therefore, \cite[Theorem~1]{ref:bolte2021conservative} yields a full-measure set $\mathcal D^{(i)}\subseteq\R^p$ such that $\operatorname{CF}^{(i)}(\phi)=\{\nabla J^{(i)}(\phi)\}$ for all $\phi\in\mathcal D^{(i)}$.

For any $\phi\in\R^p$, the $\prec$-selected path $z^{(i)}_{\prec}(\phi)$ is an element of the optimal-path set $\mathcal Z_{\mathcal T^{(i)}}^\phi$.
By construction, 
$(\hat c_{\mathcal T^{(i)}}^\phi)^\top z^{(i)}_{\prec}(\phi)=g^{(i)}(\phi)$,
so the scalar factor in $G^{(i)}_{\prec}(\phi)$ is exactly the scalar factor appearing in the definition \eqref{eq:CF} of $\operatorname{CF}^{(i)}(\phi)$.
Hence
$G^{(i)}_{\prec}(\phi)\in \operatorname{CF}^{(i)}(\phi)$ for all $\phi\in\R^p$.
It follows that
$G^{(i)}_{\prec}(\phi)=\nabla J^{(i)}(\phi)$ for all $\phi\in\mathcal D^{(i)}$.
Let $\mathcal D_J = \bigcap_{i=1}^N \mathcal D^{(i)}$.
Because $N<+\infty$, the set $\mathcal D_J$ still has full Lebesgue measure.
For every $\phi\in\mathcal D_J$, the identity above holds for all $i$, and therefore
\[
\frac1N\sum_{i=1}^N G^{(i)}_{\prec}(\phi)
= \frac1N\sum_{i=1}^N \nabla J^{(i)}(\phi)
= \nabla J(\phi).
\]
This proves the claim.
\end{proof}

\begin{lemma}[Randomized-smoothing oracle identity]
\label{lem:smoothing-oracle}
Under Assumptions~\ref{assmpt:smoothness-bounded}--\ref{assmpt:definable} and Assumption~\ref{ass:query-region}(e), the smoothed objective $\hat J_r$ defined in \eqref{eq:Jhat} is differentiable on $\R^p$ for every $r>0$.
Moreover, if $\mathcal I$ is uniformly distributed on $\{1,\dots,N\}$ and $g_\prec(\phi, \mathcal I)\Let G^{(\mathcal I)}_{\prec}(\phi)$,
then
\[
\nabla \hat J_r(\phi)
= \mathbb E_{u, \mathcal I}\big[g_\prec(\phi+r u, \mathcal I)\big]
\qquad\forall \phi\in\R^p.
\]
In particular, Algorithm~\ref{alg:conservative-sgd} with perturbation radii $\{r_k\}$ satisfies $\mathbb E[d_k\mid \phi_k]=\nabla \hat J_{r_k}(\phi_k)$ almost surely for all $k$.
\end{lemma}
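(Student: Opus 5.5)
The argument rests on the standard identity that, for a locally Lipschitz function, uniform-ball smoothing yields a differentiable function whose gradient equals the averaged a.e. gradient. It is combined with Lemma~\ref{lem:oracle-ae}, which identifies the tie-breaking oracle with the true gradient almost everywhere.

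\textbf{Step 1: differentiability of $\hat J_r$.} By Proposition~\ref{prop:locally-Lipschitz-Ji}, $J=\frac1N\sum_i J^{(i)}$ is locally Lipschitz. For fixed $\phi$ and $r>0$, write
\[
\hat J_r(\phi)=\frac{1}{|r\mathbb B|}\int_{\R^p} J(y)\,\mathbf 1\{\|y-\phi\|_2\le r\}\,\dd y,
\]
which is the convolution of $J$ with a normalized indicator. By Rademacher's theorem, $J$ is differentiable a.e. Its a.e. gradient is locally bounded, and it coincides with the distributional gradient, since locally Lipschitz functions lie in $W^{1,\infty}_{\mathrm{loc}}$. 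Hence $\nabla(J*\kappa_r)=(\nabla J)*\kappa_r$. Because $\nabla J\in L^\infty_{\mathrm{loc}}$ and the indicator kernel translates continuously in $L^1$, this convolution is continuous in $\phi$. So $\hat J_r$ is $C^1$, and
\[
\nabla \hat J_r(\phi)=\mathbb E_u[\nabla J(\phi+ru)].
\]
An elementary alternative avoids distribution theory: differentiate under the integral along each coordinate direction. Write the difference quotient as $\mathbb E_u\bigl[(J(\phi+ru+te_j)-J(\phi+ru))/t\bigr]$, which is dominated by the local Lipschitz constant, and apply dominated convergence. The pointwise limit exists for a.e. $u$, because the set of $u$ where $J$ fails to be differentiable at $\phi+ru$ is null. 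I expect this justification to be the main technical point. It requires only that the non-differentiability set has Lebesgue measure zero and that translation by $\phi$ and scaling by $r$ preserve null sets.

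\textbf{Step 2: replace the gradient by the oracle.} By Lemma~\ref{lem:oracle-ae}, $\frac1N\sum_i G^{(i)}_\prec(y)=\nabla J(y)$ for every $y\in\mathcal D_J$, a full-measure set. Since $u\sim\mathrm{Unif}(\mathbb B)$ is absolutely continuous and $r>0$, we have $\phi+ru\in\mathcal D_J$ almost surely. Therefore
\[
\mathbb E_{u,\mathcal I}[g_\prec(\phi+ru,\mathcal I)]=\mathbb E_u\Bigl[\tfrac1N\sum_i G^{(i)}_\prec(\phi+ru)\Bigr]=\mathbb E_u[\nabla J(\phi+ru)]=\nabla\hat J_r(\phi).
\]
The oracle is measurable because it is a finite piecewise selection over definable regions. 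It is integrable because it is locally bounded by the bounds established in Proposition~\ref{prop:locally-bounded-CF}.

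\textbf{Step 3: the algorithmic consequence.} In Algorithm~\ref{alg:conservative-sgd}, $d_k=|B_k|^{-1}\sum_{i\in B_k}G^{(i)}_\prec(\tilde\phi_k)$ with $\tilde\phi_k=\phi_k+r_ku_k$. Conditioning first on $(\phi_k,u_k)$, the vectors $v_i=G^{(i)}_\prec(\tilde\phi_k)$ are fixed, so Assumption~\ref{ass:query-region}(e)(iii) gives $\mathbb E[d_k\mid\phi_k,u_k]=\frac1N\sum_iG^{(i)}_\prec(\phi_k+r_ku_k)$. Taking the expectation over $u_k$ and using Assumption~\ref{ass:query-region}(e)(i), the tower property together with Step~2 yields $\mathbb E[d_k\mid\phi_k]=\nabla\hat J_{r_k}(\phi_k)$ almost surely.
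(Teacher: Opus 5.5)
Your proof is correct, and its skeleton matches the paper's. Both arguments use local Lipschitz continuity of $J$ from Proposition~\ref{prop:locally-Lipschitz-Ji}, the a.e.\ identification of the tie-breaking oracle with $\nabla J$ from Lemma~\ref{lem:oracle-ae}, and the same two-stage conditioning through Assumption~\ref{ass:query-region}(e)(iii) and then (e)(i).

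The difference is in how the smoothing identity is obtained. The paper does not prove it. After noting that $\mathbb E_{\mathcal I}[g_\prec(\phi,\mathcal I)]=\nabla J(\phi)$ on $\mathcal D_J$, it invokes \cite[Proposition~2]{ref:cutkosky2023optimal}, which gives differentiability of $\hat J_r$ and the oracle formula in one step. You instead prove $\nabla\hat J_r(\phi)=\mathbb E_u[\nabla J(\phi+ru)]$ directly from Rademacher's theorem and the convolution structure of $\hat J_r$. Only then do you substitute the oracle, using that $\phi+ru\in\mathcal D_J$ almost surely because $u$ is absolutely continuous.

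The paper's route is shorter. Yours has several advantages:
\begin{itemize}
\item it is self-contained;
\item it gives the stronger conclusion that $\hat J_r$ is $C^1$;
\item it needs only a Lipschitz constant on a neighborhood of $\phi+r\mathbb B$, so no global Lipschitz property or McShane extension (of the kind used in Lemmas~\ref{lem:smoothing-smoothness} and~\ref{lem:transfer}) is required;
\item it makes explicit the Borel measurability and local boundedness of $G^{(i)}_\prec$, which the paper leaves implicit.
\end{itemize}

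One caution about your ``elementary alternative'': dominated convergence on difference quotients, by itself, yields only partial (or directional) derivatives. To conclude differentiability you still need one of two additional facts. The first is continuity of $\phi\mapsto\mathbb E_u[\nabla J(\phi+ru)]$, which your convolution argument supplies through the $L^1$-continuity of translates of the indicator kernel. The second is that a locally Lipschitz function on $\R^p$ whose two-sided directional derivatives exist and are linear in the direction is Fr\'echet differentiable.
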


\begin{proof}[Proof of Lemma~\ref{lem:smoothing-oracle}]
By Proposition~\ref{prop:locally-Lipschitz-Ji}, each $J^{(i)}$ is locally Lipschitz. Hence
$J(\phi)=\frac1N\sum_{i=1}^N J^{(i)}(\phi)$
is locally Lipschitz.

Fix $\phi\in\mathcal D_J$, where $\mathcal D_J$ is the full-measure set from Lemma~\ref{lem:oracle-ae}.
Then
\[
\mathbb E_{\mathcal I}\big[g_\prec(\phi,\mathcal I)\big]
= \frac1N\sum_{i=1}^N G^{(i)}_{\prec}(\phi).
\]
By Lemma~\ref{lem:oracle-ae}, the right-hand side equals $\nabla J(\phi)$ for every $\phi\in\mathcal D_J$. Therefore
$\mathbb E_{\mathcal I}\big[g_\prec(\phi,\mathcal I)\big]=\nabla J(\phi)$ for all $\phi\in\mathcal D_J$.

Applying \cite[Proposition~2]{ref:cutkosky2023optimal} to the locally Lipschitz function $J$ and the stochastic oracle $g_\prec$ yields that $\hat J_r$ is differentiable on $\R^p$ and
$\nabla \hat J_r(\phi)
= \mathbb E_{u,\mathcal I}\big[g_\prec(\phi+r u,\mathcal I)\big]$ for all $\phi\in\R^p$.
By construction of Algorithm~\ref{alg:conservative-sgd},
$d_k=\frac1{|B_k|}\sum_{i\in B_k} G^{(i)}_{\prec}(\phi_k+r_k u_k)$.
Condition on $(\phi_k,u_k)$. The vectors
\[
v_i \Let G^{(i)}_{\prec}(\phi_k+r_k u_k),\qquad i=1,\dots,N
\]
are then deterministic, so Assumption~\ref{ass:query-region}(e)(iii) implies
\[
\mathbb E[d_k\mid \phi_k,u_k]
= \frac1N\sum_{i=1}^N G^{(i)}_{\prec}(\phi_k+r_k u_k)
= \mathbb E_{\mathcal I}\big[g_\prec(\phi_k+r_k u_k,\mathcal I)\mid \phi_k,u_k\big].
\]
Taking conditional expectation over $u_k$ given $\phi_k$ and using Assumption~\ref{ass:query-region}(e)(i) gives
\[
\mathbb E[d_k\mid \phi_k]
= \mathbb E_{u_k,\mathcal I}\big[g_\prec(\phi_k+r_k u_k,\mathcal I)\mid \phi_k\big]
= \nabla \hat J_{r_k}(\phi_k)
\qquad \text{almost surely } \forall k.
\qquad
\]
\end{proof}

\begin{lemma}[Localized smoothness of the smoothed objective]
\label{lem:smoothing-smoothness}
Under Assumptions \ref{ass:query-region}(a)--(b), let $r\in(0,r_0]$.
Then $\hat J_r$ is continuously differentiable on an open neighborhood of $\mathcal U$, and there exists a universal constant $C_{\mathrm{sm}}>0$ such that
\[
\|\nabla \hat J_r(\phi)-\nabla \hat J_r(\phi')\|_2
\le C_{\mathrm{sm}}\frac{L_J\sqrt p}{r}\,\|\phi-\phi'\|_2
\qquad \forall \phi,\phi'\in\mathcal U.
\]
In particular, any Lipschitz constant $L_r$ of $\nabla \hat J_r$ on $\mathcal U$ may be chosen so that
$L_r \le C_{\mathrm{sm}}L_J\sqrt p / r$.
\end{lemma}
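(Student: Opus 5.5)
The plan is to use the classical randomized-smoothing argument for Lipschitz functions with a uniform-ball kernel. The key ingredient is a representation of $\nabla\hat J_r$ as a surface integral over the sphere that involves only values of $J$, not its gradient. For $\phi$ with $\phi + r\mathbb B\subset V$, write $\hat J_r(\phi)=\frac{1}{\mathrm{vol}(r\mathbb B)}\int_{\phi+r\mathbb B}J(y)\,\dd y$. The divergence theorem (or differentiation of the integral over a translated ball) then gives
\[
\nabla\hat J_r(\phi)=\frac{p}{r}\,\mathbb E_{v\sim\mathrm{Unif}(\mathbb S^{p-1})}\bigl[J(\phi+rv)\,v\bigr].
\]
This is valid because $J$ is Lipschitz on $V$, hence continuous, and because for $\phi$ in the open neighborhood $\mathcal U+r_0\mathbb B^\circ$ the ball $\phi+r\mathbb B$ lies inside $V=\mathcal U+4r_0\mathbb B$ for $r\le r_0$. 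The same formula shows that $\nabla\hat J_r$ is continuous on that neighborhood: $\phi\mapsto J(\phi+rv)$ is uniformly continuous there, so dominated convergence applies. This gives the $C^1$ claim.

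For the Lipschitz bound on $\mathcal U$, take $\phi,\phi'\in\mathcal U$. By the representation,
\[
\|\nabla\hat J_r(\phi)-\nabla\hat J_r(\phi')\|_2=\frac{p}{r}\sup_{\|w\|_2=1}\mathbb E_v\bigl[(J(\phi+rv)-J(\phi'+rv))\langle v,w\rangle\bigr].
\]
Bounding $|J(\phi+rv)-J(\phi'+rv)|\le L_J\|\phi-\phi'\|_2$, which holds because both points lie in $V$, we get at most $\frac{p}{r}L_J\|\phi-\phi'\|_2\,\sup_w\mathbb E|\langle v,w\rangle|$. The standard spherical moment estimate gives $\mathbb E|\langle v,w\rangle|\le(\mathbb E\langle v,w\rangle^2)^{1/2}=1/\sqrt p$. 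Combining, the Lipschitz constant is at most $L_J\sqrt p/r$, so $C_{\mathrm{sm}}=1$ suffices. Since a universal constant is all that is claimed, slack is allowed in this step.

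The main obstacle is justifying the surface-integral formula rigorously when $J$ is only Lipschitz on $V$ and not globally. I would handle this by extending $J|_V$ to a globally $L_J$-Lipschitz function $\tilde J$ via the McShane extension. The smoothed function then agrees with $\hat J_r$ on $\mathcal U+r_0\mathbb B^\circ$. The formula for $\tilde J$ follows from the standard identity $\nabla_\phi\int_{\phi+r\mathbb B}\tilde J=\int_{\phi+r\partial\mathbb B}\tilde J\,n\,\dd S$, obtained by differentiating under translation. Alternatively, one can write $\nabla\hat J_r(\phi)=\mathbb E_u[\nabla\tilde J(\phi+ru)]$ using a.e. differentiability (Rademacher) and apply the Gauss–Green theorem for Lipschitz functions.

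An alternative route that avoids the sphere formula is to bound the difference of ball averages directly. In $\hat J_r(\phi)-\hat J_r(\phi')$ only the symmetric difference of the two balls contributes, and its volume fraction is $O(\sqrt p\|\phi-\phi'\|/r)$. Applying this bound to the gradient representation $\mathbb E_u[\nabla\tilde J(\phi+ru)]$ with $\|\nabla\tilde J\|\le L_J$ gives the same estimate, with $C_{\mathrm{sm}}$ a small absolute constant. This second route only needs the a.e.-gradient representation from Lemma~\ref{lem:smoothing-oracle}, so I would fall back on it if the boundary-integral bookkeeping becomes delicate.
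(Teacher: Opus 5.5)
Your proposal is correct and follows the paper's route. Both arguments pass to the McShane extension of $J|_V$ and observe that its radius-$r$ smoothing coincides with $\hat J_r$ on the open set $\mathcal U + r_0\operatorname{int}\mathbb B$ (the paper's $W$, since $\delta = 2r_0$); both then apply the gradient-Lipschitz estimate for uniform-ball smoothing of a globally Lipschitz function. The only difference is that the paper cites this estimate as a black box \cite[Proposition~2.2]{ref:lin2022gradient}, whereas you derive it from the sphere identity $\nabla\hat J_r(\phi)=\frac{p}{r}\mathbb E_v[J(\phi+rv)v]$ and the bound $\mathbb E|\langle v,w\rangle|\le p^{-1/2}$, which gives the explicit constant $C_{\mathrm{sm}}=1$.
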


\begin{proof}[Proof of Lemma~\ref{lem:smoothing-smoothness}]
By Assumption~\ref{ass:query-region}(b), the restriction of $J$ to $V$ is $L_J$-Lipschitz.
Define the McShane extension~(\cite{ref:mcshane1934extension}; \cite[Definition~2.1]{ref:petrakis2020mcshane})
\[
\bar J(x)
\Let \inf_{y\in V}\bigl\{J(y)+L_J\|x-y\|_2\bigr\},
\qquad x\in\R^p.
\]
Then $\bar J$ is globally $L_J$-Lipschitz on $\R^p$ and satisfies $\bar J = J$ on $V$~\cite[Theorem~2.3]{ref:petrakis2020mcshane}.

Let $W = \mathcal U + \frac{\delta}{2}\operatorname{int}\mathbb B$.
Then $W$ is an open neighborhood of $\mathcal U$.
Fix $\phi\in W$. Choose $\phi_0\in\mathcal U$ and $b\in \frac{\delta}{2}\operatorname{int}\mathbb B$ such that $\phi=\phi_0+b$.
For any $u\in\mathbb B$, since $r\le r_0=\delta/2$, we have
$\|\phi+r u-\phi_0\|_2
\le \|b\|_2+r
< \frac{\delta}{2}+\frac{\delta}{2}
= \delta$.
Hence
$\phi+r u \in \mathcal U+\delta\,\operatorname{int}\mathbb B \subseteq V$.
Therefore
\[
\hat J_r(\phi)
= \mathbb E_u[J(\phi+r u)]
= \mathbb E_u[\bar J(\phi+r u)]
= \widehat{\bar J}_r(\phi)
\qquad \forall \phi\in W,
\]
where $\widehat{\bar J}_r$ is the radius-$r$ smoothing of the McShane extension defined by
\[
\widehat{\bar J}_r(\phi)\Let \mathbb E_u[\bar J(\phi+r u)].
\]
Thus $\hat J_r$ and $\widehat{\bar J}_r$ coincide on the open neighborhood $W$ of $\mathcal U$.

Because $\bar J$ is globally $L_J$-Lipschitz, \cite[Proposition~2.2]{ref:lin2022gradient} implies that $\widehat{\bar J}_r$ is differentiable on $\R^p$ and its gradient is ($C_{\mathrm{sm}}L_J\sqrt p/r$)-Lipschitz for some universal constant $C_{\mathrm{sm}}>0$. In particular, $\widehat{\bar J}_r \in C^1(\R^p)$.
Since $\hat J_r=\widehat{\bar J}_r$ on the open set $W$, the function $\hat J_r$ is continuously differentiable on $W$ and
$\nabla \hat J_r(\phi)=\nabla \widehat{\bar J}_r(\phi)$ for all $\phi\in W$.
In particular, for all $\phi,\phi'\in\mathcal U$,
\[
\|\nabla \hat J_r(\phi)-\nabla \hat J_r(\phi')\|_2
=
\|\nabla \widehat{\bar J}_r(\phi)-\nabla \widehat{\bar J}_r(\phi')\|_2
\le C_{\mathrm{sm}}\frac{L_J\sqrt p}{r}\,\|\phi-\phi'\|_2.
\]
This proves the claim.
\end{proof}

\begin{lemma}[Cross-radius drift of the smoothed objectives]
\label{lem:smoothing-drift}
Suppose Assumption~\ref{ass:query-region}(a)--(b) holds.
Then, for every $\phi\in\mathcal U$ and every $r,s\in[0,r_0]$,
$\big|\hat J_r(\phi)-\hat J_s(\phi)\big| \le L_J|r-s|$.
\end{lemma}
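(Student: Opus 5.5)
We prove the bound by writing both smoothed objectives on a common probability space and bounding the integrand pointwise with the Lipschitz constant $L_J$ from Assumption~\ref{ass:query-region}(b). By the definition~\eqref{eq:Jhat}, with $u\sim\mathrm{Unif}(\mathbb B)$ we have
\[
\hat J_r(\phi)-\hat J_s(\phi)=\mathbb E_u\bigl[J(\phi+ru)-J(\phi+su)\bigr].
\]
Here the same $u$ serves for both radii, which is legitimate because both expectations are taken against the uniform law on $\mathbb B$. For $r=0$ or $s=0$ the corresponding term is just $J(\phi)$, consistent with the same formula.

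Next we check that every evaluation point stays in the region where the Lipschitz bound applies. For $\phi\in\mathcal U$, $u\in\mathbb B$ and $r,s\in[0,r_0]$, both $\phi+ru$ and $\phi+su$ lie in $\mathcal U+r_0\mathbb B\subseteq\mathcal U+4r_0\mathbb B=V$. Assumption~\ref{ass:query-region}(b) therefore gives
\[
|J(\phi+ru)-J(\phi+su)|\le L_J\|(r-s)u\|_2\le L_J|r-s|,
\]
since $\|u\|_2\le1$.

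It remains to pass the bound through the expectation, and this is the only point needing care. The integrand is bounded: it is a continuous function of $u$ (as $J$ is locally Lipschitz by Proposition~\ref{prop:locally-Lipschitz-Ji}) evaluated on the compact ball, so both expectations are finite and the subtraction above is valid. Jensen's inequality, $|\mathbb E X|\le\mathbb E|X|$, then yields
\[
|\hat J_r(\phi)-\hat J_s(\phi)|\le L_J|r-s|.
\]
There is no substantive obstacle; the key is the coupling through a shared $u$ together with the containment in $V$.
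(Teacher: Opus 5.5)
Your proof is correct and follows the paper's argument: you couple both radii through a shared $u\sim\mathrm{Unif}(\mathbb B)$, observe that $\phi+ru,\phi+su\in\mathcal U+r_0\mathbb B\subseteq V$, apply the $L_J$-Lipschitz bound from Assumption~\ref{ass:query-region}(b) pointwise, and take expectations. One small remark: for integrability you do not need Proposition~\ref{prop:locally-Lipschitz-Ji}, which relies on assumptions outside this lemma's hypotheses, because Lipschitz continuity of $J$ on $V$ already makes $u\mapsto J(\phi+ru)$ continuous and bounded on $\mathbb B$.
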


\begin{proof}[Proof of Lemma~\ref{lem:smoothing-drift}]
Fix $\phi\in\mathcal U$ and $r,s\in[0,r_0]$.
For any $u\in\mathbb B$, both $\phi+r u$ and $\phi+s u$ belong to $\mathcal U+r_0\mathbb B\subseteq V$.
Hence Assumption~\ref{ass:query-region}(b) implies
$\big|J(\phi+r u)-J(\phi+s u)\big|
\le L_J\|(r-s)u\|_2
\le L_J|r-s|$.
Taking expectations over $u$ gives the result.
\end{proof}

For each $k$, let $L_k$ denote any Lipschitz constant of $\nabla \hat J_{r_k}$ on the compact query set $\mathcal U$.
By Lemma~\ref{lem:smoothing-smoothness}, we may choose $L_k \le C_{\mathrm{sm}}L_J\sqrt p / r_k$ for all $k$.

\begin{lemma}[Stationarity transfer from $\hat J_r$ to $J$]
\label{lem:transfer}
Assume Assumption~\ref{ass:query-region}(a)--(b) and let $r\in(0,r_0]$.
Then, for every $\phi\in\mathcal U$,
$\|\nabla J(\phi)\|_{2r}
\le \|\nabla \hat J_r(\phi)\|_{r}$.
\end{lemma}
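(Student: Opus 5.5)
The idea is to use the standard Goldstein-type transfer argument (cf.\ \cite{ref:cutkosky2023optimal}): a gradient of the smoothed objective $\hat J_r$ is an average of gradients of $J$ over a ball of radius $r$. Averaging such gradients over a finite set contained in a ball of radius $r$ therefore yields an average of $\nabla J$ over a ball of radius $2r$.

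Fix $\phi\in\mathcal U$ and $\eta>0$. By the definition of $\|\nabla \hat J_r(\phi)\|_r$, choose a finite set $S\subseteq \mathbb B(\phi,r)\cap\mathcal D_{\hat J_r}$ with $\phi=\frac1{|S|}\sum_{y\in S}y$ and $\bigl\|\frac1{|S|}\sum_{y\in S}\nabla\hat J_r(y)\bigr\|_2\le \|\nabla \hat J_r(\phi)\|_r+\eta$. Since $\hat J_r$ is differentiable everywhere (Lemma~\ref{lem:smoothing-oracle}), the constraint $y\in\mathcal D_{\hat J_r}$ imposes nothing. For each $y\in S$, every point $y+ru$ with $u\in\mathbb B$ lies in $\mathcal U+2r_0\mathbb B\subseteq V$. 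On $V$ the function $J$ is $L_J$-Lipschitz, hence differentiable a.e.\ (Rademacher), and we may differentiate under the integral:
\[
\nabla\hat J_r(y)=\mathbb E_u\bigl[\nabla J(y+ru)\bigr].
\]
This holds because the difference quotients are bounded by $L_J$ and converge a.e.; alternatively, it follows from Lemma~\ref{lem:smoothing-oracle} combined with Lemma~\ref{lem:oracle-ae}. Consequently,
\[
\frac1{|S|}\sum_{y\in S}\nabla\hat J_r(y)=\mathbb E_u\Bigl[\frac1{|S|}\sum_{y\in S}\nabla J(y+ru)\Bigr].
\]
Each point $y+ru$ lies in $\mathbb B(\phi,2r)$. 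Moreover, since $\mathbb E_u[u]=0$, the points satisfy $\frac1{|S|}\sum_{y}(y+ru)=\phi+ru$ for every fixed $u$, which is a problem: the barycenter is $\phi+ru$, not $\phi$.

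The main obstacle is therefore that the stationarity measure in Definition~\ref{def:goldstein} requires \emph{finite} sets with barycenter exactly $\phi$, whereas the smoothing produces a continuous average. My plan is to symmetrize, using $u$ and $-u$ jointly. Because $u$ and $-u$ have the same distribution,
\[
\mathbb E_u\bigl[\nabla J(y+ru)\bigr]=\mathbb E_u\Bigl[\tfrac12\bigl(\nabla J(y+ru)+\nabla J(y-ru)\bigr)\Bigr].
\]
For each $u$, the finite set $T_u=\{y\pm ru:y\in S\}$ has barycenter $\phi$ and lies in $\mathbb B(\phi,2r)$. For a.e.\ $u$, all of its points lie in $\mathcal D_J$, because $\mathcal D_J$ has full measure and the uniform law on $\mathbb B$ is absolutely continuous. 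The averaged vector is thus $\mathbb E_u[v(u)]$, where $v(u)=\frac1{|T_u|}\sum_{x\in T_u}\nabla J(x)$ is an admissible average. Next, approximate this expectation by a finite average: draw i.i.d.\ samples $u_1,\dots,u_m$ and form the union multiset $\bigcup_j T_{u_j}$. Its barycenter is still $\phi$, and its gradient average is $\frac1m\sum_j v(u_j)$, which converges a.s.\ to $\mathbb E_u[v(u)]$ by the strong law of large numbers, since $\|v\|_2\le L_J$. Multisets can be handled by perturbing points slightly within $\mathcal D_J$ while keeping the barycenter; alternatively, if the definition is read with multiplicities, no perturbation is needed. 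Passing to the limit gives $\|\nabla J(\phi)\|_{2r}\le \|\nabla\hat J_r(\phi)\|_r+\eta+o(1)$. Letting $\eta\to0$ concludes the proof.
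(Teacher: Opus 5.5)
Your argument is correct, but it follows a different route from the paper. The paper never computes with $J$ directly. It replaces $J$ by the McShane extension $\bar J$ of $J|_V$, and checks that $J=\bar J$ on a neighborhood of each point of $\mathbb B(\phi,2r)$ and that $\hat J_r=\widehat{\bar J}_r$ on a neighborhood of each point of $\mathbb B(\phi,r)$, so neither stationarity measure changes. It then cites \cite[Corollary~6]{ref:cutkosky2023optimal} for the globally Lipschitz $\bar J$. You instead reprove that corollary in localized form:
dominated convergence gives $\nabla\hat J_r(y)=\mathbb E_u[\nabla J(y+ru)]$ from Lipschitzness of $J$ on $V$ alone;
the antithetic pairing $u\leftrightarrow-u$ restores the exact barycenter $\phi$;
the strong law turns the continuous average into finite admissible sets inside $\mathbb B(\phi,2r)$.
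The paper's version is shorter and reuses the extension already built for Lemma~\ref{lem:smoothing-smoothness}. However, it delegates the substantive step---including the exact-barycenter, finite-set subtlety---to the citation, and it needs the extension only because the cited result assumes a globally Lipschitz function. Yours is self-contained and requires no extension.

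Two small repairs are needed. First, rely on your dominated-convergence derivation rather than on Lemmas~\ref{lem:oracle-ae}--\ref{lem:smoothing-oracle}. Those lemmas use Assumptions~\ref{assmpt:smoothness-bounded}--\ref{assmpt:definable} and Assumption~\ref{ass:query-region}(e), which the present lemma does not assume. For the same reason, do not invoke Lemma~\ref{lem:smoothing-oracle} for everywhere-differentiability of $\hat J_r$; you do not need it, since $S\subseteq\mathcal D_{\hat J_r}$ by choice.

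Second, your remark on multisets does not work as stated. Perturbing coincident points can change the gradient average substantially because $\nabla J$ is not continuous, and Definition~\ref{def:goldstein} is phrased with sets, not multisets. The remark is also unnecessary. Take a coincidence $y+\sigma r u_j=y'+\sigma' r u_{j'}$ with $\sigma,\sigma'\in\{\pm1\}$ and $(y,\sigma,j)\neq(y',\sigma',j')$. When $j\neq j'$, it pins $u_j$ to a single value given $u_{j'}$; when $j=j'$, it pins $u_j$ to $\pm(y'-y)/(2r)$. Either event has probability zero. Hence, almost surely, $|T_{u_j}|=2|S|$ for every $j$, and each union $\bigcup_{j\le m}T_{u_j}$ consists of $2m|S|$ distinct points of $\mathbb B(\phi,2r)\cap\mathcal D_J$ with barycenter $\phi$, which is exactly what you need.
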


\begin{proof}[Proof of Lemma~\ref{lem:transfer}]
Let $\bar J:\R^p\to\R$ be the McShane extension
\[
\bar J(x)
\Let \inf_{y\in V}\bigl\{J(y)+L_J\|x-y\|_2\bigr\}.
\]
Then $\bar J$ is globally $L_J$-Lipschitz and $\bar J=J$ on $V$.
Fix $\phi\in\mathcal U$. Let $y\in\mathbb B(\phi,2r)$.
Then the open ball $y+r \operatorname{int}\mathbb B$ satisfies
$y+r \operatorname{int}\mathbb B \subseteq \mathbb B(\phi,3r) \subseteq \mathbb B(\phi,3r_0) \subseteq V$,
because $r\le r_0$ and $3r_0<2\delta$.
Hence $J$ and $\bar J$ coincide on an open neighborhood of $y$.
Therefore $y\in\mathcal D_J$ if and only if $y\in\mathcal D_{\bar J}$, and whenever these gradients exist, $\nabla J(y)=\nabla \bar J(y)$.
Since this holds for every $y\in\mathbb B(\phi,2r)$, Definition~\ref{def:goldstein} yields
$\|\nabla J(\phi)\|_{2r}=\|\nabla \bar J(\phi)\|_{2r}$.

Define
$\widehat{\bar J}_r(z)\Let \mathbb E_u[\bar J(z+r u)]$.
Let $z\in\mathbb B(\phi,r)$.
If $w\in z+r\,\operatorname{int}\mathbb B$ and $u\in\mathbb B$, then
\[
\|w+r u-\phi\|_2
\le \|w-z\|_2+\|z-\phi\|_2+r
< r+r+r
= 3r
\le 3r_0
< 2\delta.
\]
Hence
$w+r u \in \mathcal U+2\delta\mathbb B = V$.
Therefore $J$ and $\bar J$ agree on $w+r\mathbb B$, so
$\hat J_r(w)=\mathbb E_u[J(w+r u)]
= \mathbb E_u[\bar J(w+r u)]
= \widehat{\bar J}_r(w)$.
Thus $\hat J_r$ and $\widehat{\bar J}_r$ coincide on an open neighborhood of every point
$z\in\mathbb B(\phi,r)$. Consequently,
$\|\nabla \hat J_r(\phi)\|_{r}=\|\nabla \widehat{\bar J}_r(\phi)\|_{r}$.

Let $a = \|\nabla \widehat{\bar J}_r(\phi)\|_{r}$.
For any $\eta>0$, we have $\|\nabla \widehat{\bar J}_r(\phi)\|_{r}\le a+\eta$.
Applying \cite[Corollary~6]{ref:cutkosky2023optimal} to the globally $L_J$-Lipschitz function
$\bar J$ with smoothing radius $r$ and scale parameter $\delta_0=r$ yields
$\|\nabla \bar J(\phi)\|_{2r}\le a+\eta$.
Since $\eta>0$ is arbitrary, letting $\eta\downarrow 0$ gives
$\|\nabla \bar J(\phi)\|_{2r}\le \|\nabla \widehat{\bar J}_r(\phi)\|_{r}$.
Combining this inequality with the identities above yields $\|\nabla J(\phi)\|_{2r}\le \|\nabla\hat J_r(\phi)\|_r$.
\end{proof}

Observe that if $F:\R^p\to\R$ is almost everywhere differentiable and $0<\delta_1\le \delta_2$, then
\[
\|\nabla F(x)\|_{\delta_2}\le \|\nabla F(x)\|_{\delta_1}
\qquad\forall x\in\R^p,
\]
because the admissible class of finite sets $S$ in Definition~\ref{def:goldstein} enlarges with the radius.
Since $\{r_k\}$ is nonincreasing and $\delta=2r_0$, this gives
\[
\|\nabla J(\phi)\|_{\delta}
\le \|\nabla J(\phi)\|_{2r_k}
\qquad\forall \phi\in\mathcal U,\ \forall k\in\mathbb N.
\]

Fix an integer $K\ge 1$ and run Algorithm~\ref{alg:conservative-sgd} for $K$ iterations with a deterministic positive step-size sequence $\{\alpha_k\}_{k\in\mathbb N}$ and with perturbation radii $\{r_k\}_{k\in\mathbb N}$ from Assumption~\ref{ass:query-region}.
Define the cumulative step-size summaries
$A_K\Let\sum_{k=0}^{K-1}\alpha_k$ and
$B_K\Let\sum_{k=0}^{K-1}\alpha_k^2/r_k$.
For the finite-time guarantee, we use the standard step-size-weighted randomized output convention:
after the iterates $\phi_0,\ldots,\phi_{K-1}$ have been generated, draw an index
$R\in\{0,1,\dots,K-1\}$ independently of all algorithmic randomness according to
$\mathbb P(R=k)=\alpha_k/A_K$ for $k=0,\ldots,K-1$, and report $\phi_R$ as the output.
Define the initial-to-terminal smoothed objective gap
\[
\hat\Delta_{0,K}
\Let \hat J_{r_0}(\phi_0)-\inf_{\phi\in\R^p}\hat J_{r_{K-1}}(\phi).
\]

\begin{theorem}[Finite-time $(\delta,\varepsilon)$-stationarity via parameter-space smoothing]
\label{thm:finite-time}

Let $\{r_k\}_{k\in\mathbb N}$ be the nonincreasing sequence satisfying Assumption~\ref{ass:query-region}, and set $\delta=2r_0$. Suppose Assumptions~\ref{assmpt:smoothness-bounded}--\ref{assmpt:definable} and Assumption~\ref{ass:query-region} hold.
For $K \ge 1$, let $\phi_R$ denote the output of Algorithm~\ref{alg:conservative-sgd} after $K$ iterations, using perturbation radii $\{r_k\}_{k\in\mathbb N}$ and any deterministic positive step-size sequence $\{\alpha_k\}_{k\in\mathbb N}$.
Then
\[
\mathbb{E}\big[\|\nabla J(\phi_R)\|_{\delta}\big]
\le \left(
\frac{2\hat\Delta_{0,K} + 2L_J(r_0-r_{K-1}) + C_{\mathrm{sm}}L_J\sqrt p D B_K}{2A_K}
\right)^{1/2}.
\]
\end{theorem}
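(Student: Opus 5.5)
The plan is a standard nonconvex SGD descent analysis applied to the time-varying smoothed objectives $\hat J_{r_k}$. The stationarity transfer (Lemma~\ref{lem:transfer}) and radius monotonicity then turn the resulting bound into a bound on $\|\nabla J\|_\delta$.

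First, I will establish a one-step descent inequality. For each $k$, Lemma~\ref{lem:smoothing-smoothness} says $\nabla\hat J_{r_k}$ is $L_k$-Lipschitz on $\mathcal U$ with $L_k\le C_{\mathrm{sm}}L_J\sqrt p/r_k$. Both $\phi_k$ and $\phi_{k+1}$ lie in the convex set $\mathcal U$ by Assumption~\ref{ass:query-region}(a), so the segment joining them does as well. The descent lemma therefore gives
\[
\hat J_{r_k}(\phi_{k+1})\le \hat J_{r_k}(\phi_k)-\alpha_k\inner{\nabla\hat J_{r_k}(\phi_k)}{d_k}+\tfrac{L_k}{2}\alpha_k^2\|d_k\|_2^2 .
\]
Next I take conditional expectation given $\phi_k$. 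Lemma~\ref{lem:smoothing-oracle} gives $\mathbb E[d_k\mid\phi_k]=\nabla\hat J_{r_k}(\phi_k)$, and Assumption~\ref{ass:query-region}(c) bounds the second moment by $D$. This yields
\[
\mathbb E\,\hat J_{r_k}(\phi_{k+1})\le \mathbb E\,\hat J_{r_k}(\phi_k)-\alpha_k\mathbb E\|\nabla\hat J_{r_k}(\phi_k)\|_2^2+\tfrac{C_{\mathrm{sm}}L_J\sqrt p D}{2}\tfrac{\alpha_k^2}{r_k}.
\]

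Second, I handle the change of radius. To telescope, I need to replace $\hat J_{r_k}(\phi_{k+1})$ by $\hat J_{r_{k+1}}(\phi_{k+1})$. Lemma~\ref{lem:smoothing-drift} bounds the difference by $L_J(r_k-r_{k+1})$ because $\phi_{k+1}\in\mathcal U$. I apply this for $k=0,\dots,K-2$. The last iterate is kept at radius $r_{K-1}$ and bounded below by $\inf\hat J_{r_{K-1}}$, which is finite by Assumption~\ref{ass:query-region}(d). Summing, the drift terms telescope to $L_J(r_0-r_{K-1})$, and I obtain
\[
\sum_{k=0}^{K-1}\alpha_k\mathbb E\|\nabla\hat J_{r_k}(\phi_k)\|_2^2\le \hat\Delta_{0,K}+L_J(r_0-r_{K-1})+\tfrac{C_{\mathrm{sm}}L_J\sqrt pD}{2}B_K .
\]
Dividing by $A_K$ expresses the left side as $\mathbb E\|\nabla\hat J_{r_R}(\phi_R)\|_2^2$ under the step-size-weighted draw of $R$. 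This is exactly the expression inside the square root of the theorem, once numerator and denominator are multiplied by $2$.

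Third, I transfer stationarity. The singleton $S=\{\phi\}$ is admissible in Definition~\ref{def:goldstein}, since $\hat J_r$ is differentiable everywhere. Hence $\|\nabla\hat J_{r_k}(\phi)\|_{r_k}\le\|\nabla\hat J_{r_k}(\phi)\|_2$. Lemma~\ref{lem:transfer} then gives $\|\nabla J(\phi)\|_{2r_k}\le\|\nabla\hat J_{r_k}(\phi)\|_{r_k}$. The monotonicity observation preceding the theorem gives $\|\nabla J(\phi)\|_\delta\le\|\nabla J(\phi)\|_{2r_k}$. Applying this chain at $\phi=\phi_R$ with $k=R$ and then Jensen's inequality, $\mathbb E X\le(\mathbb E X^2)^{1/2}$, gives the claim.

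The main obstacle I expect is bookkeeping in the descent step. The Lipschitz bound on $\nabla\hat J_{r_k}$ is only local to $\mathcal U$, so I must make sure the update segment stays inside $\mathcal U$; convexity of $\mathcal U$ together with $\phi_{k+1}\in\mathcal U$ from (a) settles this. I must also condition on $\phi_k$ rather than on the full history, which is consistent with how Lemma~\ref{lem:smoothing-oracle} and (c) are stated.
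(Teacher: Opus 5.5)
Your proposal is correct and follows the paper's proof essentially step for step. It uses the descent lemma for $\hat J_{r_k}$ on the convex set $\mathcal U$ with the unbiasedness and second-moment bounds, and the cross-radius drift bound to telescope over $k=0,\dots,K-2$, bounding the last iterate below by $\inf \hat J_{r_{K-1}}$. It then applies the step-size-weighted random index with Jensen's inequality, and the singleton-admissibility, transfer-lemma, and radius-monotonicity chain to pass from $\|\nabla \hat J_{r_R}(\phi_R)\|_2$ to $\|\nabla J(\phi_R)\|_\delta$.
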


\begin{proof}[Proof of Theorem~\ref{thm:finite-time}]
Set $f_k\Let\hat J_{r_k}$. By Lemma~\ref{lem:smoothing-oracle}, $\mathbb E[d_k\mid\phi_k]=\nabla f_k(\phi_k)$ almost surely. By Lemma~\ref{lem:smoothing-smoothness}, $f_k$ is $C^1$ on a neighborhood of $\mathcal U$ and
$\nabla f_k$ is $L_k$-Lipschitz on $\mathcal U$, with
$L_k\le C_{\mathrm{sm}}L_J\sqrt p/r_k$. 
Since under~Assumption~\ref{ass:query-region}(a), $\phi_k,\phi_{k+1}\in\mathcal U$ almost surely, the descent lemma and the update $\phi_{k+1}=\phi_k-\alpha_k d_k$ give
\[
f_k(\phi_{k+1})
\le f_k(\phi_k) -\alpha_k \big\langle \nabla f_k(\phi_k),\, d_k\big\rangle +\frac{L_k\alpha_k^2}{2}\|d_k\|_2^2.
\]
Taking conditional expectations given $\phi_k$ and using
$\mathbb E[d_k\mid \phi_k]=\nabla f_k(\phi_k)$
yields
\[
\mathbb E[f_k(\phi_{k+1})\mid \phi_k]
\le f_k(\phi_k) -\alpha_k \|\nabla f_k(\phi_k)\|_2^2 +\frac{L_k\alpha_k^2}{2}\,\mathbb E[\|d_k\|_2^2\mid \phi_k].
\]
Assumption~\ref{ass:query-region}(c) gives
$\mathbb E[\|d_k\|_2^2\mid \phi_k]\le D$ almost surely,
so
\[
\mathbb E[f_k(\phi_{k+1})]
\le \mathbb E[f_k(\phi_k)] -\alpha_k\,\mathbb E[\|\nabla f_k(\phi_k)\|_2^2] +\frac{L_k\alpha_k^2}{2}D.
\]
For every $k=0,1,\dots,K-2$, Lemma~\ref{lem:smoothing-drift} yields
\[
\mathbb E[f_{k+1}(\phi_{k+1})]
\le \mathbb E[f_k(\phi_{k+1})] + L_J(r_k-r_{k+1}).
\]
Combining the last two inequalities, we obtain
\[
\mathbb E[f_{k+1}(\phi_{k+1})]
\le \mathbb E[f_k(\phi_k)] -\alpha_k\,\mathbb E[\|\nabla f_k(\phi_k)\|_2^2] +\frac{L_k\alpha_k^2}{2}D + L_J(r_k-r_{k+1})
\]
for all $k=0,1,\dots,K-2$.
For the final iteration $k=K-1$, we keep
\[
\mathbb E[f_{K-1}(\phi_K)]
\le \mathbb E[f_{K-1}(\phi_{K-1})] -\alpha_{K-1}\,\mathbb E[\|\nabla f_{K-1}(\phi_{K-1})\|_2^2] +\frac{L_{K-1}\alpha_{K-1}^2}{2}D.
\]
Summing these inequalities from $k=0$ to $K-1$ gives
\[
\sum_{k=0}^{K-1}\alpha_k\,\mathbb E[\|\nabla f_k(\phi_k)\|_2^2]
\le f_0(\phi_0)-\mathbb E[f_{K-1}(\phi_K)] + L_J\sum_{k=0}^{K-2}(r_k-r_{k+1}) + \frac{D}{2}\sum_{k=0}^{K-1}L_k\alpha_k^2.
\]
By Assumption~\ref{ass:query-region}(d), $f_{K-1}=\hat J_{r_{K-1}}$ is bounded below, so
$\mathbb E[f_{K-1}(\phi_K)]\ge \inf_{\phi\in\R^p}f_{K-1}(\phi)$, and
\[
\sum_{k=0}^{K-1}\alpha_k\,\mathbb E[\|\nabla f_k(\phi_k)\|_2^2]
\le \hat\Delta_{0,K}+L_J(r_0-r_{K-1})+\frac{D}{2}\sum_{k=0}^{K-1}L_k\alpha_k^2
\le \hat\Delta_{0,K}+L_J(r_0-r_{K-1})+\frac{C_{\mathrm{sm}}L_J\sqrt p D}{2} B_K.
\]
By the definition of the random index $R$,
\[
\mathbb E\big[\|\nabla \hat J_{r_R}(\phi_R)\|_2^2\big]
= \sum_{k=0}^{K-1}\frac{\alpha_k}{A_K}\,\mathbb E[\|\nabla f_k(\phi_k)\|_2^2]
\le \frac{1}{A_K}\left(\hat\Delta_{0,K}+L_J(r_0-r_{K-1})+\frac{C_{\mathrm{sm}}L_J\sqrt p D}{2} B_K\right).
\]
By Jensen's inequality,
\[
\mathbb E\big[\|\nabla \hat J_{r_R}(\phi_R)\|_2\big]
\le \left(\mathbb E\big[\|\nabla \hat J_{r_R}(\phi_R)\|_2^2\big]\right)^{1/2}
\le \left(\frac{1}{A_K}\left(\hat\Delta_{0,K}+L_J(r_0-r_{K-1})+\frac{C_{\mathrm{sm}}L_J\sqrt p D}{2} B_K\right)\right)^{1/2}.
\]
Because $\hat J_{r_R}$ is continuously differentiable on a neighborhood of $\mathcal U$ and $\phi_R\in\mathcal U$ almost surely, the singleton set $S=\{\phi_R\}$ is admissible in Definition~\ref{def:goldstein}, and hence
$\|\nabla \hat J_{r_R}(\phi_R)\|_{r_R}\le \|\nabla \hat J_{r_R}(\phi_R)\|_2$.
By the monotonicity observation above and Lemma~\ref{lem:transfer},
\[
\|\nabla J(\phi_R)\|_{\delta}
\le \|\nabla J(\phi_R)\|_{2r_R}
\le \|\nabla \hat J_{r_R}(\phi_R)\|_{r_R}
\le \|\nabla \hat J_{r_R}(\phi_R)\|_2.
\]
Taking expectations and combining the inequalities above yields
\[
\mathbb{E}\bigl[\|\nabla J(\phi_R)\|_{\delta}\bigr]
\leq \Bigl(\frac{2\hat\Delta_{0,K} + 2L_J(r_0-r_{K-1}) + C_{\mathrm{sm}}L_J\sqrt p D B_K}{2A_K}\Bigr)^{1/2}.
\]
This completes the proof.
\end{proof}

The finite-time estimate above is compatible with the step-size schedule used in Theorem~\ref{thm:convergence}. If one chooses a positive sequence satisfying
$\sum_{k=0}^{+\infty}\alpha_k=+\infty$
and
$\lim_{k\to+\infty}\alpha_k\log(k+2)=0$,
then the first condition makes $A_K=\sum_{k=0}^{K-1}\alpha_k$ diverge, which is the denominator governing the finite-time randomized-smoothing bound, while the second condition is the fast-decay hypothesis needed for the asymptotic conservative-field theorem~\cite[Theorem~9]{ref:bolte2021conservative}. Thus both conditions are imposed for the single step-size schedule.

\section{Technical Proofs for Section~\ref{sec:metric-perturbation}}\label{ec:proofs-metric-perturbation}
\begin{lemma}[Compactness of the metric-perturbation set]
\label{lem:burg-compact}
Let $d \in \mathbb N$ and $\rho_{\mathcal T} \ge 0$. The set $\mathcal X_{\rho_{\mathcal T}}$ is nonempty and compact in $\mathbb S^d$.
\end{lemma}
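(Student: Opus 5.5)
Nonemptiness is immediate: $X=I$ lies in $\mathbb S_{++}^d$ and $D_{\mathrm{Burg}}(I,I)=\Tr(I)-\log\det(I)-d=0\le\rho_{\mathcal T}$, so $I\in\mathcal X_{\rho_{\mathcal T}}$. For compactness, the plan is to show that $\mathcal X_{\rho_{\mathcal T}}$ is bounded and closed in $\mathbb S^d$ by working with eigenvalues. The Burg divergence decomposes spectrally: if $X\in\mathbb S_{++}^d$ has eigenvalues $\mu_1,\dots,\mu_d>0$, then
\[
D_{\mathrm{Burg}}(X,I)=\sum_{i=1}^d h(\mu_i),\qquad h(\mu)\Let \mu-\log\mu-1 .
\]
The scalar function $h$ is nonnegative and strictly convex on $(0,+\infty)$, with minimum $h(1)=0$, and $h(\mu)\to+\infty$ both as $\mu\downarrow0$ and as $\mu\to+\infty$.

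\textbf{Eigenvalue confinement.} Because every term in the sum is nonnegative, membership $X\in\mathcal X_{\rho_{\mathcal T}}$ forces $h(\mu_i)\le\rho_{\mathcal T}$ for each $i$. By the coercivity and convexity of $h$, the sublevel set $\{\mu>0:h(\mu)\le\rho_{\mathcal T}\}$ is a compact interval $[\mu_-,\mu_+]$ with $0<\mu_-\le1\le\mu_+<+\infty$. Its endpoints are the two roots of $h(\mu)=\rho_{\mathcal T}$, and the interval collapses to $\{1\}$ when $\rho_{\mathcal T}=0$. It follows that every $X\in\mathcal X_{\rho_{\mathcal T}}$ satisfies $\mu_-I\preceq X\preceq\mu_+I$. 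In particular, $\|X\|_F\le\sqrt d\,\mu_+$, which gives boundedness.

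\textbf{Closedness.} The key step is to avoid the boundary of the open cone $\mathbb S_{++}^d$, where $\log\det$ is undefined. Take a sequence $X_n\in\mathcal X_{\rho_{\mathcal T}}$ with $X_n\to X$ in $\mathbb S^d$. Eigenvalues depend continuously on the matrix, and the bound $X_n\succeq\mu_-I$ holds uniformly in $n$, so it passes to the limit and gives $X\succeq\mu_-I\succ0$. Thus $X\in\mathbb S_{++}^d$. On $\mathbb S_{++}^d$ the map $X\mapsto\Tr(X)-\log\det X-d$ is continuous, so $D_{\mathrm{Burg}}(X,I)=\lim_n D_{\mathrm{Burg}}(X_n,I)\le\rho_{\mathcal T}$. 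Hence $X\in\mathcal X_{\rho_{\mathcal T}}$, and the set is closed in $\mathbb S^d$.

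By Heine--Borel in the finite-dimensional space $\mathbb S^d$, $\mathcal X_{\rho_{\mathcal T}}$ is compact. The only delicate point is the uniform lower bound $\mu_->0$, which is what keeps limit points out of the singular boundary; the rest of the argument is routine.
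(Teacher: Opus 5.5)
Your proof is correct and follows the paper's argument essentially step for step. Both use nonemptiness via $I$, the spectral decomposition $\sum_j(\lambda_j-\log\lambda_j-1)$ to confine all eigenvalues to a compact interval in $(0,+\infty)$, and closedness by passing the uniform bound $X_n\succeq \mu_- I$ to the limit before invoking continuity of the divergence on $\mathbb S_{++}^d$ and Heine--Borel; the only cosmetic difference is that the paper passes the Loewner bounds to the limit through the quadratic forms $u^\top X_n u$ rather than continuity of eigenvalues.
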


\begin{proof}[Proof of Lemma~\ref{lem:burg-compact}]
Fix $d \in \mathbb N$ and $\rho_{\mathcal T} \ge 0$, and define the Burg divergence from the identity matrix by
$\Psi(X) \Let \Tr(X) - \log\det(X) - d$, where $X \in \mathbb S_{++}^d$.
Then $\mathcal X_{\rho_{\mathcal T}} = \left\{X \in \mathbb S_{++}^d : \Psi(X) \le \rho_{\mathcal T}\right\}$.
The identity matrix $I$ belongs to $\mathbb S_{++}^d$ and satisfies
$\Psi(I)=d-0-d=0\le \rho_{\mathcal T}$,
so $\mathcal X_{\rho_{\mathcal T}}$ is nonempty.

Now take any $X \in \mathcal X_{\rho_{\mathcal T}}$. Since $X$ is symmetric positive definite, there exist an orthogonal matrix $Q$ and positive eigenvalues $\lambda_1,\dots,\lambda_d$ such that
$X = Q \operatorname{diag}(\lambda_1,\dots,\lambda_d) Q^\top$.
Because the trace and determinant depend only on the eigenvalues,
$\Psi(X)
= \sum_{j=1}^d \bigl(\lambda_j - \log \lambda_j - 1\bigr)$.
Define the scalar Burg-divergence function for an eigenvalue by
\[
\varphi(\lambda) \Let \lambda - \log \lambda - 1,
\qquad \lambda>0.
\]
Then
$\Psi(X)=\sum_{j=1}^d \varphi(\lambda_j)$.
The function $\varphi$ is continuously differentiable on $(0, +\infty)$, with
$\varphi'(\lambda)=1-1/\lambda$
and
$\varphi''(\lambda)=1/{\lambda^2}>0$.
Hence $\varphi$ is strictly convex. Since $\varphi'(1)=0$ and $\varphi(1)=0$, the point $\lambda=1$ is the unique global minimizer, and therefore
$\varphi(\lambda)\ge \varphi(1) = 0$ for all $\lambda>0$.
Moreover,
$\lim_{\lambda\downarrow 0}\varphi(\lambda)=+\infty$ and $\lim_{\lambda\to+\infty}\varphi(\lambda)=+\infty$.
Because $\varphi$ is continuous, strictly decreasing on $(0,1]$, and strictly increasing on $[1, +\infty)$, for every $\rho\ge 0$ the sublevel set $\{\lambda>0:\varphi(\lambda)\le \rho\}$ is a compact interval, denoted by $[m_\rho,M_\rho]$, with
$0<m_\rho\le 1\le M_\rho<+\infty$.
Applying this with $\rho=\rho_{\mathcal T}$, we obtain
\[
\varphi(\lambda)\le \rho_{\mathcal T}
\quad\Longleftrightarrow\quad
\lambda\in [m_{\rho_{\mathcal T}},M_{\rho_{\mathcal T}}].
\]
If some eigenvalue $\lambda_j$ of $X$ satisfied $\varphi(\lambda_j)>\rho_{\mathcal T}$, then
$\Psi(X)=\sum_{k=1}^d \varphi(\lambda_k)\ge \varphi(\lambda_j)>\rho_{\mathcal T}$,
contradicting $X\in\mathcal X_{\rho_{\mathcal T}}$. Hence every eigenvalue of every matrix in $\mathcal X_{\rho_{\mathcal T}}$ lies in $[m_{\rho_{\mathcal T}},M_{\rho_{\mathcal T}}]$. Equivalently,
$m_{\rho_{\mathcal T}} I \preceq X \preceq M_{\rho_{\mathcal T}} I$ for all $X\in\mathcal X_{\rho_{\mathcal T}}$.
In particular, the set is bounded.

It remains to prove that $\mathcal X_{\rho_{\mathcal T}}$ is closed in $\mathbb S^d$. Let $\{X_k\}_{k\ge 1}\subset \mathcal X_{\rho_{\mathcal T}}$ and suppose that
$X_k \to \bar X \in \mathbb S^d$ 
in Frobenius norm. Since
$m_{\rho_{\mathcal T}} I \preceq X_k \preceq M_{\rho_{\mathcal T}} I$ for all $k$,
we have, for every $u\in\mathbb R^d$,
$u^\top(X_k-m_{\rho_{\mathcal T}}I)u \ge 0$ and $u^\top(M_{\rho_{\mathcal T}}I-X_k)u \ge 0$.
Passing to the limit yields
\[
u^\top(\bar X-m_{\rho_{\mathcal T}}I)u \ge 0,
\qquad
u^\top(M_{\rho_{\mathcal T}}I-\bar X)u \ge 0
\qquad \forall u\in\mathbb R^d.
\]
Therefore
$m_{\rho_{\mathcal T}} I \preceq \bar X \preceq M_{\rho_{\mathcal T}} I$.
In particular, $\bar X \succ 0$, so $\bar X \in \mathbb S_{++}^d$.
Since $\Psi$ is continuous on $\mathbb S_{++}^d$, we have
$\Psi(\bar X)=\lim_{k\to+\infty}\Psi(X_k)\le \rho_{\mathcal T}$,
since each $X_k$ belongs to $\mathcal X_{\rho_{\mathcal T}}$. Hence $\bar X\in\mathcal X_{\rho_{\mathcal T}}$, so $\mathcal X_{\rho_{\mathcal T}}$ is closed in $\mathbb S^d$.
Thus $\mathcal X_{\rho_{\mathcal T}}$ is closed and bounded in the finite-dimensional space $\mathbb S^d$, which is isomorphic to $\mathbb R^{d(d+1)/2}$. By the Heine--Borel theorem \cite[Theorem~2.41]{ref:rudin1976principles}, it is compact in $\mathbb S^d$.
\end{proof}

\begin{proof}[Proof of Proposition~\ref{prop:attainment}]
By Lemma~\ref{lem:burg-compact}, the set $\mathcal X_{\rho_{\mathcal T}}$ is nonempty and compact in $\mathbb S^d$. It is also convex because
$\mathcal X_{\rho_{\mathcal T}}
= \left\{X\in\mathbb S_{++}^d : \Tr(X)-\log\det(X)-d\le \rho_{\mathcal T}\right\}$
is the sublevel set of a convex function on $\mathbb S_{++}^d$.
Define the matrix-to-edge-travel-time map by
\[
f:\mathbb S^d\to\mathbb R^{|\mathcal E|},
\qquad
f(X)[e] \Let \phi_{\mathcal T}[e]^\top X \phi_{\mathcal T}[e], \quad e\in\mathcal E.
\]
For each fixed edge $e$,
$f(X)[e]
= \phi_{\mathcal T}[e]^\top X \phi_{\mathcal T}[e]
= \langle X,\phi_{\mathcal T}[e]\phi_{\mathcal T}[e]^\top\rangle$,
so each component is linear in $X$. Hence $f$ is linear and continuous. Therefore $\mathcal C(\mathcal T)=f(\mathcal X_{\rho_{\mathcal T}})$ is nonempty, compact, and convex as the linear image of a nonempty, compact, convex set. 
As a consequence, for any fixed $z_1\in\mathcal Z$, the function $F(c) = c^\top z_1 - \min_{z\in\mathcal Z} c^\top z$ is continuous on $\mathcal C(\mathcal T)$. Indeed, $\mathcal Z\subseteq\{0,1\}^{|\mathcal E|}$ is finite, so $c\mapsto \min_{z\in\mathcal Z} c^\top z$ is the pointwise minimum of finitely many linear functions and is therefore continuous. Hence $F$ attains its maximum over the compact set $\mathcal C(\mathcal T)$.
\end{proof}

\begin{proof}[Proof of Proposition~\ref{prop:rho-refor}]
We express~\eqref{eq:uncertainty-radius} as:
\[
\rho^{(i)}
= \min \left\{ D_{\mathrm{Burg}}(X,I): X\in\mathbb S_{++}^d,\; h^{(i)}(X)\ge t^{(i)} \right\},
\]
where
\[
h^{(i)}(X)
= \min_{\substack{z\in\{0,1\}^{|\mathcal E|},\, Az=b^{(i)}}} c_{\mathcal T^{(i)}}(X)^\top z,
\qquad
c_{\mathcal T^{(i)}}(X)[e]
= \phi_{\mathcal T^{(i)}}[e]^\top X\phi_{\mathcal T^{(i)}}[e].
\]
Fix $X\in\mathbb S_{++}^d$. The inner problem is the shortest-path problem
$\min_{\substack{z\in\{0,1\}^{|\mathcal E|},\, Az=b^{(i)}}} c_{\mathcal T^{(i)}}(X)^\top z$.
Consider its linear-programming relaxation
\[
\mathcal P^{(i)}
\Let \left\{z\in\mathbb R^{|\mathcal E|} : Az=b^{(i)},\; 0\le z\le 1 \right\}.
\]
Because $A$ is a node--arc incidence matrix, it is totally unimodular. Therefore the block matrix $[A; I; -I]$ is also totally unimodular, where semicolons denote vertical concatenation. Since the right-hand side $[b^{(i)}; \mathbf 1; \mathbf 0]$ is integral, every extreme point of $\mathcal P^{(i)}$ is integral. Together with the bounds $0\le z\le 1$, this implies that every extreme point of $\mathcal P^{(i)}$ belongs to $\{0,1\}^{|\mathcal E|}$. Hence the shortest-path problem admits the exact relaxation
\[
h^{(i)}(X)
= \min_{z\in\mathcal P^{(i)}} c_{\mathcal T^{(i)}}(X)^\top z
= \min\left\{
c_{\mathcal T^{(i)}}(X)^\top z :
Az=b^{(i)},\;
0\le z\le 1
\right\}.
\]
Introduce a free multiplier $\pi\in\mathbb R^{|\mathcal V|}$ for the equality constraint $Az=b^{(i)}$ and a nonnegative multiplier $\omega\in\mathbb R_+^{|\mathcal E|}$ for the upper bound constraint $z\le 1$. Its dual program is
\[
\max_{\pi,\omega}
\left\{
(b^{(i)})^\top\pi - \mathbf 1^\top\omega :
A^\top\pi-\omega\le c_{\mathcal T^{(i)}}(X),\;
\pi\in\mathbb R^{|\mathcal V|},\;
\omega\in\mathbb R_+^{|\mathcal E|}
\right\}.
\]

For the samples under consideration, the corresponding origin--destination pair is feasible, so the primal LP is feasible; moreover the bounds $0\le z\le 1$ make the feasible region bounded. Hence strong duality holds, and thus
\[
h^{(i)}(X)
= \max_{\substack{\pi\in\mathbb R^{|\mathcal V|},\, \omega\in\mathbb R_+^{|\mathcal E|}}}
\left\{(b^{(i)})^\top\pi - \mathbf 1^\top\omega : A^\top\pi-\omega\le c_{\mathcal T^{(i)}}(X) \right\}.
\]
It follows that
$h^{(i)}(X)\ge t^{(i)}$ 
if and only if there exist dual variables $(\pi,\omega)$ such that
\[
A^\top\pi-\omega\le c_{\mathcal T^{(i)}}(X),
\qquad
\omega\in\mathbb R_+^{|\mathcal E|},
\qquad
(b^{(i)})^\top\pi-\mathbf 1^\top\omega\ge t^{(i)}.
\]
Substituting this characterization into the definition of $\rho^{(i)}$ yields
\[
\rho^{(i)}
=
\min
\left\{
D_{\mathrm{Burg}}(X,I):
\begin{array}{l}
X\in\mathbb S_{++}^d,\;
\pi\in\mathbb R^{|\mathcal V|},\;
\omega\in\mathbb R_+^{|\mathcal E|},\,
A^\top\pi-\omega\le c_{\mathcal T^{(i)}}(X),\,
(b^{(i)})^\top\pi-\mathbf 1^\top\omega\ge t^{(i)}
\end{array}
\right\}.
\]
Using the definition of $c_{\mathcal T^{(i)}}(X)$,
$c_{\mathcal T^{(i)}}(X)[e]
= \phi_{\mathcal T^{(i)}}[e]^\top X\phi_{\mathcal T^{(i)}}[e]
= \langle X,\phi_{\mathcal T^{(i)}}[e]\phi_{\mathcal T^{(i)}}[e]^\top\rangle$,
we obtain exactly the formulation in~\eqref{eq:rho-refor}.

Finally, the objective $D_{\mathrm{Burg}}(X,I)$ is convex in $X$, and every constraint is affine in $(X,\pi,\omega)$. Hence the reformulated problem is a convex optimization problem equivalent to~\eqref{eq:uncertainty-radius}.
\end{proof}

\section{Technical Proofs for Section~\ref{sec:solution-procedure}}\label{ec:proofs-solution-procedure}
\begin{proof}[Proof of Proposition~\ref{prop:subgradient-sA}]
For each fixed $z$, the map $X \mapsto c_{\mathcal T}(X)^\top z$ is affine in $X$, and the same holds for $X \mapsto -c_{\mathcal T}(X)^\top z = -\inner{G(z)}{X}$. 
The function $f_2$ satisfies
\[
f_2(X)
= - \min_{z \in \mathcal Z} c_{\mathcal T}(X)^\top z
= \max_{z \in \mathcal Z} \bigl(-c_{\mathcal T}(X)^\top z\bigr)
= \max_{z \in \mathcal Z} \bigl(-\inner{G(z)}{X}\bigr).
\]
Since $\mathcal Z$ is a finite, nonempty set of $0$--$1$ path indicators, the minimum $\min_{z \in \mathcal Z} c_{\mathcal T}(X)^\top z$ is attained for every $X$. The maximum above is a finite pointwise maximum of affine functions of $X$. Therefore, $f_2$ is convex.

For each $z \in \mathcal Z$, denote the affine component associated with path $z$ by
$\varphi_z(X) = -\inner{G(z)}{X}$.
The map $\varphi_z$ is affine in $X$ with gradient $\nabla_X \varphi_z(X) = -G(z)$ for all $X$. The finite-max version of Danskin's theorem~\cite{ref:danskin2012theory} implies that the subdifferential of $f_2$ at $X$ is the convex hull of the gradients of the active functions at $X$:
\[
\partial f_2(X)
= \operatorname{conv} \bigl\{ \nabla_X \varphi_z(X) : z \in \argmax_{u \in \mathcal Z} \varphi_u(X) \bigr\}.
\]
The active set coincides with the set of shortest paths under $X$, because
\[
\argmax_{u \in \mathcal Z} \varphi_u(X)
= \argmax_{u \in \mathcal Z} \bigl(-c_{\mathcal T}(X)^\top u\bigr)
= \argmin_{u \in \mathcal Z} c_{\mathcal T}(X)^\top u
= \mathcal Z\opt(X).
\]
Substituting the explicit gradient yields
$\partial f_2(X) = \operatorname{conv}\bigl\{ -G(z) : z \in \mathcal Z\opt(X) \bigr\}$.
\end{proof}

\begin{proof}[Proof of Theorem~\ref{thm:solution-of-DC-sub}]
We consider the convex optimization problem
\begin{equation}
\label{eq:DC-sub-X-proof}
\min_{X \in \mathbb S_{++}^d} \bigl\{ \inner{G^{(k)}}{X} : \Tr(X) - \log\det(X) - d \le \rho_{\mathcal T} \bigr\}.
\end{equation}
The objective is linear in $X$, and the feasible set is a sublevel set of the strictly convex function $\Psi(X) \Let \Tr(X) - \log\det(X) - d$ on $\mathbb S_{++}^d$. When $\rho_{\mathcal T} > 0$, Slater's condition holds: there exists $\epsilon > 0$ small enough so that $X = (1+\epsilon)I$ satisfies $\Psi(X) < \rho_{\mathcal T}$, because $d\bigl((1+\epsilon) - \log(1+\epsilon) - 1\bigr)$ tends to $0$ as $\epsilon \downarrow 0$. Hence the Karush--Kuhn--Tucker (KKT) conditions 
\[
\begin{cases}
\text{\textbf{Primal feasibility: }}
X\opt \in \mathbb S_{++}^d,
\Tr(X\opt) - \log\det(X\opt) - d \le \rho_{\mathcal T};\\
\text{\textbf{Dual feasibility: }}
\gamma\opt \ge 0;\\
\text{\textbf{Complementary slackness: }}
\gamma\opt \bigl(\Tr(X\opt) - \log\det(X\opt) - d - \rho_{\mathcal T} \bigr) = 0;\\
\text{\textbf{Stationarity: }}
\nabla_X L(X\opt,\gamma\opt) = 0
\end{cases}
\]
are therefore necessary and sufficient for optimality~\cite[Theorem~5.3.1]{bazaraa2006nonlinear}.

We now construct a primal--dual pair for~\eqref{eq:DC-sub-X-proof} and verify the KKT conditions:
Let $\lambda_1,\dots,\lambda_d$ be the eigenvalues of $G^{(k)}$, let $\lambda_{\min} \Let \min_{i=1,\dots,d}\lambda_i$, and set $\gamma_0 \Let \max\{0,-\lambda_{\min}\}$. For $\gamma>\gamma_0$, define the scalar Burg-divergence term by
\[
x_i(\gamma) \Let \frac{\gamma}{\lambda_i+\gamma},
\qquad
\kappa(x) \Let x-\log x-1,
\qquad
\varphi(\gamma) = \sum_{i=1}^d \kappa\bigl(x_i(\gamma)\bigr) - \rho_{\mathcal T}.
\]
We first show that the equation $\varphi(\gamma)=0$ admits a unique solution on $(\gamma_0,+\infty)$. The definition of $\gamma_0$ ensures $\lambda_i+\gamma>0$ for all $i$ and all $\gamma>\gamma_0$, so each $x_i(\gamma)$ is well defined and positive on this interval. If $\lambda_{\min}<0$, then some $\lambda_j=\lambda_{\min}$ and $\lambda_j+\gamma \downarrow 0$ as $\gamma \downarrow \gamma_0$, so $x_j(\gamma) \to +\infty$ and $\kappa\bigl(x_j(\gamma)\bigr)\to +\infty$. If $\lambda_{\min}\ge 0$, then $\gamma_0=0$ and $G^{(k)}\neq 0$ implies some $\lambda_i>0$, so $x_i(\gamma)\to 0$ as $\gamma\downarrow 0$ and $\kappa\bigl(x_i(\gamma)\bigr)\to +\infty$. In both cases $\varphi(\gamma)\to +\infty$ as $\gamma \downarrow \gamma_0$. In contrast, $x_i(\gamma)\to 1$ as $\gamma\to+\infty$, so $\kappa\bigl(x_i(\gamma)\bigr)\to 0$ and hence $\varphi(\gamma)\to -\rho_{\mathcal T}$ as $\gamma\to+\infty$. Differentiation gives
\[
\frac{\dd x_i(\gamma)}{\dd \gamma} = \frac{\lambda_i}{(\lambda_i+\gamma)^2},
\qquad
\frac{\dd}{\dd \gamma}\kappa\bigl(x_i(\gamma)\bigr)
= \Bigl(1-\frac{1}{x_i(\gamma)}\Bigr)\frac{\dd x_i(\gamma)}{\dd \gamma}
= -\frac{\lambda_i^2}{\gamma(\lambda_i+\gamma)^2},
\]
so
\[
\varphi'(\gamma)
= -\sum_{i=1}^d \frac{\lambda_i^2}{\gamma(\lambda_i+\gamma)^2}
<0
\quad \text{for all }\gamma>\gamma_0,
\]
where the strict inequality uses $G^{(k)}\neq 0$. Therefore $\varphi$ is continuous and strictly decreasing on $(\gamma_0,+\infty)$ with range $(-\rho_{\mathcal T},+\infty)$, so there exists a unique $\gamma\opt \in (\gamma_0,+\infty)$ such that $\varphi(\gamma\opt)=0$.

We now define the candidate primal point $X\opt \Let \gamma\opt\bigl(G^{(k)}+\gamma\opt I\bigr)^{-1}$ 
and the candidate multiplier $\gamma\opt$. We verify the KKT conditions for $(X\opt,\gamma\opt)$.
The inequality $\gamma\opt>\gamma_0 \ge -\lambda_{\min}$ implies $G^{(k)}+\gamma\opt I \succ 0$, so $X\opt \succ 0$. The eigenvalues of $X\opt$ equal $x_i(\gamma\opt)=\gamma\opt/(\lambda_i+\gamma\opt)$, so
$\Tr(X\opt) - \log\det(X\opt) - d - \rho_{\mathcal T}
= \sum_{i=1}^d \kappa\bigl(x_i(\gamma\opt)\bigr) - \rho_{\mathcal T}
= \varphi(\gamma\opt)
= 0$.
This identity demonstrates primal feasibility and shows that the constraint is active.

Dual feasibility holds because $\gamma\opt>\gamma_0\ge 0$. Complementary slackness holds because the constraint is active. 
It remains to verify stationarity. Define the Lagrangian with multiplier $\gamma \ge 0$ as
\[
L(X,\gamma)
\Let \inner{G^{(k)}}{X} + \gamma\bigl(\Tr(X) - \log\det(X) - d - \rho_{\mathcal T}\bigr).
\]
The matrix derivative of $X \mapsto \Tr(X) - \log\det(X)$ is $I - X^{-1}$, so
$\nabla_X L(X,\gamma)
= G^{(k)} + \gamma\bigl(I - X^{-1}\bigr)$.
The definition of $X\opt$ gives
$(X\opt)^{-1}
= \frac{1}{\gamma\opt}\bigl(G^{(k)}+\gamma\opt I\bigr)
= \frac{1}{\gamma\opt}G^{(k)} + I$.
Substituting into the stationarity expression yields
\[
\nabla_X L(X\opt,\gamma\opt)
= G^{(k)} + \gamma\opt\Bigl(I - \bigl(\frac{1}{\gamma\opt}G^{(k)} + I\bigr)\Bigr)
= G^{(k)} - G^{(k)} = 0,
\]
so stationarity holds.

The pair $(X\opt,\gamma\opt)$ satisfies all KKT conditions. Convexity and Slater's condition imply that $X\opt$ is a global minimizer.
We now show uniqueness. Let $\tilde X$ be any minimizer. Slater's condition implies that there exists $\tilde\gamma \ge 0$ such that $(\tilde X,\tilde\gamma)$ satisfies the KKT conditions. Stationarity gives
$G^{(k)} + \tilde\gamma\bigl(I-\tilde X^{-1}\bigr)=0$.
If $\tilde\gamma=0$, this identity forces $G^{(k)}=0$, which contradicts the assumption $G^{(k)}\neq 0$. Hence $\tilde\gamma>0$, complementary slackness forces the constraint to be active at $\tilde X$, and the same eigendecomposition argument yields $\varphi(\tilde\gamma)=0$. The strict monotonicity of $\varphi$ implies $\tilde\gamma=\gamma\opt$, and stationarity then forces $\tilde X = \gamma\opt(G^{(k)}+\gamma\opt I)^{-1} = X\opt$. This proves the closed-form solution and uniqueness. It remains to prove the search interval.

We first derive an upper bound for $\kappa(x)$ when $x>0$. The inequality $\log(x) \ge 1 - 1/x$ for all $x>0$, which follows from the concavity of the logarithm, implies
\[
\kappa(x)
= x - \log x - 1
\le x - \Bigl(1 - \frac{1}{x}\Bigr) - 1
= x - 2 + \frac{1}{x}
= \frac{(x-1)^2}{x}.
\]
For each $i$ and $\gamma > \gamma_0$ we therefore have
$\kappa\bigl(x_i(\gamma)\bigr)
\le \bigl(x_i(\gamma)-1\bigr)^2 / x_i(\gamma)$.
We now simplify the right-hand side. Since
\[
\frac{\bigl(x_i(\gamma)-1\bigr)^2}{x_i(\gamma)}
= \bigl(\frac{\gamma}{\lambda_i + \gamma} - 1\bigr)^2 / \bigl( \frac{\gamma}{\lambda_i + \gamma} \bigr)
= \frac{\lambda_i^2}{(\lambda_i + \gamma)^2} \frac{\lambda_i + \gamma}{\gamma}
= \frac{\lambda_i^2}{\gamma(\lambda_i + \gamma)},
\]
summing over $i$ yields
\[
\varphi(\gamma)
= \sum_{i=1}^d \kappa\bigl(x_i(\gamma)\bigr) - \rho_{\mathcal T}
\le \sum_{i=1}^d \frac{\lambda_i^2}{\gamma(\lambda_i + \gamma)} - \rho_{\mathcal T}
\le \sum_{i=1}^d \frac{\lambda_i^2}{\gamma(\lambda_{\min} + \gamma)} - \rho_{\mathcal T}
= \frac{\sum_{i=1}^d \lambda_i^2}{\gamma(\lambda_{\min} + \gamma)} - \rho_{\mathcal T}.
\]
Define the upper bound
\[
\varphi_{\mathrm{upper}}(\gamma)
\Let \frac{\sum_{i=1}^d \lambda_i^2}{\gamma(\lambda_{\min} + \gamma)} - \rho_{\mathcal T}.
\]
We have $\varphi(\gamma) \le \varphi_{\mathrm{upper}}(\gamma)$ for all $\gamma > \gamma_0$. Any $\gamma$ that satisfies $\varphi_{\mathrm{upper}}(\gamma) \le 0$ also satisfies $\varphi(\gamma) \le 0$.
The equation $\varphi_{\mathrm{upper}}(\gamma) = 0$ expands to the quadratic equation
\[
\gamma^2 + \lambda_{\min}\gamma - \frac{\sum_{i=1}^d \lambda_i^2}{\rho_{\mathcal T}} = 0.
\]
Solving this equation gives the positive root $\gamma_{\max}$ in~\eqref{eq:upper-gamma}.
Hence
$\varphi_{\mathrm{upper}}(\gamma_{\max})=0$ and $\varphi(\gamma_{\max})\le0$. Moreover
$\gamma_{\max}>(-\lambda_{\min}+|\lambda_{\min}|)/2=\gamma_0$. Since
$\varphi(\gamma_{\max})\le0=\varphi(\gamma\opt)$ and $\varphi$ is strictly decreasing, we obtain
$\gamma\opt\le\gamma_{\max}$. Together with $\gamma\opt>\gamma_0$, this proves
$\gamma\opt\in(\gamma_0,\gamma_{\max}]$.
\end{proof}

\begin{proof}[Proof of Lemma~\ref{lem:hybrid-gamma-search-app}]
Define the root-finding residual for the multiplier equation by
\[
\varphi(\gamma) \Let \sum_{i=1}^d \Bigl( \frac{\gamma}{\lambda_i+\gamma} -\log\bigl(\frac{\gamma}{\lambda_i+\gamma}\bigr) -1 \Bigr) -\rho_{\mathcal T},
\qquad \gamma\in(\gamma_0,\gamma_{\max}],
\]
so that $\varphi(\gamma\opt)=0$. 
Set
$t\Let\gamma-\gamma_0\in(0,\Delta_\gamma]$ and
$\widehat\varphi(t)\Let\varphi(\gamma_0+t)$. Since $\varphi$ is analytic on
$(\gamma_0,+\infty)$, $\widehat\varphi$ is analytic on $(0,\Delta_\gamma]$.

Let $\mathcal I \Let \{i:\lambda_i\neq 0\}$ denote the nonzero-eigenvalue index set. For each $i\in\mathcal I$, define the $i$th summand of $\widehat\varphi$ by
\[
g_i(t) \Let \frac{\gamma_0+t}{\lambda_i+\gamma_0+t} -\log\Bigl(\frac{\gamma_0+t}{\lambda_i+\gamma_0+t}\Bigr) -1.
\]
Thus, for $t\in(0,\Delta_\gamma]$, the residual and its derivatives satisfy
\[
\widehat\varphi(t)=\sum_{i\in\mathcal I} g_i(t)-\rho_{\mathcal T},
\qquad
g_i'(t) = -\frac{\lambda_i^2}{(\lambda_i+\gamma_0+t)^2(\gamma_0+t)} <0,
\qquad
g_i''(t) = \frac{\lambda_i^2(\lambda_i+3(\gamma_0+t))} {(\gamma_0+t)^2(\lambda_i+\gamma_0+t)^3}>0.
\]
Thus $\widehat\varphi$ is strictly decreasing and convex on $(0,\Delta_\gamma]$.

To verify the derivative growth condition, write
$u(t)\Let(\gamma_0+t)^{-1}$ and $v_i(t)\Let(\lambda_i+\gamma_0+t)^{-2}$,
so $g_i'(t)=-\lambda_i^2 u(t)v_i(t)$. For every integer $j\ge 0$,
\[
u^{(j)}(t)=(-1)^j j!(\gamma_0+t)^{-j-1},
\qquad
v_i^{(j)}(t)=(-1)^j (j+1)!(\lambda_i+\gamma_0+t)^{-j-2}.
\]
Applying Leibniz's rule and taking the absolute value, for each integer $m\ge 2$,
\[
|g_i^{(m)}(t)|
=
|g_i'(t)|
\sum_{j=0}^{m-1}
\binom{m-1}{j}
j!(m-j)!
(\gamma_0+t)^{-j}
(\lambda_i+\gamma_0+t)^{-m+1+j}.
\]
Because $\gamma_0+t\ge t$ and $\lambda_i+\gamma_0+t\ge t$, we obtain
\begin{align*}
|g_i^{(m)}(t)|
&\le
|g_i'(t)|
\sum_{j=0}^{m-1}
\binom{m-1}{j}j!(m-j)! \, t^{-m+1}
= |g_i'(t)|(m-1)!t^{-m+1}\sum_{j=0}^{m-1}(m-j)
= |g_i'(t)|\,\frac{(m+1)!}{2}\,t^{-m+1}.
\end{align*}
Summing over $i\in\mathcal I$ yields
$|\widehat\varphi^{(m)}(t)|
\le |\widehat\varphi'(t)|(m+1)!t^{-m+1}/2$,
and hence
\[
\bigl| \frac{\widehat\varphi^{(m)}(t)}{m!\widehat\varphi'(t)} \bigr|^{1/(m-1)}
\le \bigl(\frac{m+1}{2}\bigr)^{1/(m-1)} t^{-1}
\le \frac{3}{2}t^{-1}.
\]

Normalize the interval by $\gamma=\gamma_0+\Delta_\gamma y$ and define $\Theta(y) = \varphi(\gamma_0+\Delta_\gamma y)=\widehat\varphi(\Delta_\gamma y)$.
With $y\opt=(\gamma\opt-\gamma_0)/\Delta_\gamma$, the chain rule gives
\[
\Theta^{(m)}(y)=\Delta_\gamma^m \widehat\varphi^{(m)}(\Delta_\gamma y),
\qquad
\Theta'(y)=\Delta_\gamma \widehat\varphi'(\Delta_\gamma y),
\]
and hence
\[
\bigl| \frac{\Theta^{(m)}(y)}{m!\Theta'(y)} \bigr|^{1/(m-1)}
= \Delta_\gamma \bigl| \frac{\widehat\varphi^{(m)}(\Delta_\gamma y)}{m!\widehat\varphi'(\Delta_\gamma y)} \bigr|^{1/(m-1)}
\le \frac{3}{2y}.
\]
Thus $\Theta$ satisfies the hypothesis of the approximate-root criterion in~\cite[Theorem~2]{ref:ye1992new} with $\alpha=3/2$.
Fix a target accuracy $\epsilon_\gamma\in(0,\Delta_\gamma]$ and set
$\epsilon_y = \epsilon_\gamma/\Delta_\gamma \in (0,1]$.
If $y\opt \le \epsilon_y^3$, then setting $\hat y \Let \epsilon_y^3$ gives
$0 \le \hat y-y\opt \le \epsilon_y^3 \le \epsilon_y$.
Otherwise $y\opt\in[\epsilon_y^3,1]$. In that case, \cite[Theorem~2]{ref:ye1992new} implies that whenever $y\opt\in[\hat y,(13/12)\hat y]$, the point $\hat y$ is an approximate root of $\Theta$. Applying the geometric interval-selection and Newton refinement scheme in~\cite[Section~4]{ref:ye1992new} on $[\epsilon_y^3,1]$ therefore computes an $\epsilon_y$-accurate estimate of $y\opt$ in
$O\left(\log\log(1/\epsilon_y)\right)
= O\left(\log\log(\Delta_\gamma/\epsilon_\gamma)\right)$
scalar iterations. Mapping back by $\gamma=\gamma_0+\Delta_\gamma y$ gives an $\epsilon_\gamma$-accurate estimate of $\gamma\opt$ with the same complexity.
\end{proof}

\begin{proof}[Proof of Proposition~\ref{prop:complexity}]
We analyze the main operations in one iteration of Algorithm~\ref{alg:DCA-algorithm} and bound their running times in terms of $|\mathcal E|$, $|\mathcal V|$, and $d$. First, the algorithm updates the edge travel times. For each edge $e \in \mathcal E$, it computes $c_{\mathcal T}(X^{(k)})[e]$, which involves a matrix--vector product and a vector inner product in $\mathbb R^d$. This requires $O(d^2)$ operations per edge. Updating the costs for all edges, therefore, takes $O(|\mathcal E| d^2)$ time.

Second, the algorithm finds a shortest path $z^{(k)}$ from either depot to the demand node $D$ under the current costs $c_{\mathcal T}(X^{(k)})$. It can do so by running Dijkstra's algorithm once on the network, with the depots treated as sources. On a sparse graph with nonnegative edge weights, Dijkstra's algorithm using a Fibonacci heap has complexity $O(|\mathcal E| + |\mathcal V| \log |\mathcal V|)$.

Third, the algorithm forms the matrix $G^{(k)}$. 
For each edge $e$, the outer product $\phi_{\mathcal T}[e]\bigl(\phi_{\mathcal T}[e]\bigr)^\top$ costs $O(d^2)$ operations, and the sum extends over the edges used by $z^{(k)}$ and $z_1$. Hence forming $G^{(k)}$ costs $O(|\mathcal E| d^2)$ operations per iteration.

Fourth, the algorithm computes the eigendecomposition $G^{(k)} = Q \Lambda_\lambda Q^\top$ of the $d \times d$ symmetric matrix $G^{(k)}$. Standard numerical routines for the full eigendecomposition of a dense symmetric matrix have complexity $O(d^3)$.

Fifth, the algorithm solves the scalar equation~\eqref{eq:lagrangian-target} for $\gamma\opt$ with the hybrid root-finding routine described in Lemma~\ref{lem:hybrid-gamma-search-app}. The complexity is $O\bigl(d \log\log(\Delta_\gamma/\epsilon_\gamma)\bigr)$.

Sixth, the algorithm updates the matrix $X^{(k+1)}\gets\gamma^\star\bigl(G^{(k)}+\gamma^\star I\bigr)^{-1}$.
Given the eigendecomposition $G^{(k)}=Q\Lambda_\lambda Q^\top$ with $\Lambda_\lambda=\Diag(\lambda_1,\ldots,\lambda_d)$, we can write
$X^{(k+1)} = Q\Lambda_XQ^\top$ with 
$\Lambda_X = \Diag\bigl(\frac{\gamma^\star}{\lambda_1+\gamma^\star}, \ldots, \frac{\gamma^\star}{\lambda_d+\gamma^\star} \bigr)$.
Forming the diagonal entries of $\Lambda_X$ costs $O(d)$ time. 
Constructing $X^{(k+1)}=Q\Lambda_XQ^\top$ is dominated by dense matrix
multiplication and costs $O(d^3)$ time.

The remaining operations in the iteration are at most linear in $d$ or $|\mathcal E|$ and are dominated by the costs above. Summing all contributions, one iteration of Algorithm~\ref{alg:DCA-algorithm} costs
$O\bigl(|\mathcal E| d^2 + |\mathcal V| \log |\mathcal V| + d^3 + d \log\log(\Delta_\gamma/\epsilon_\gamma)\bigr)$.
This completes the proof.
\end{proof}

\begin{proof}[Proof of Theorem~\ref{thm:convergence-of-DCA}]
Following the standard DCA reformulation for convexly constrained DC programs in \cite[Section~1]{ref:le2014dc}, we absorb the convex constraint into the first DC component and work with
\[
\mathcal X \Let \mathcal X_{\rho_{\mathcal T}},
\qquad \tilde f_1 \Let f_1 + \iota_{\mathcal X},
\qquad q_k(X) \Let \tilde f_1(X) - \inner{\zeta^{(k)}}{X}.
\]
Then $\tilde f_1$ is proper, convex, and lower semicontinuous on $\mathbb S^d$, while $f_2$ is finite, convex, and lower semicontinuous on $\mathbb S^d$. Since $\mathbb S^d$ is finite-dimensional, $(\mathbb S^d,\|\cdot\|_F)$ is a Banach space, and $\partial \tilde f_1$ and $\partial f_2$ have closed graphs. Lemma~\ref{lem:burg-compact} guarantees that $\mathcal X$ is nonempty and compact.

Every completed iteration of the DCA subproblem satisfies
\begin{equation}
\label{eq:qk_iteration}
X^{(k+1)} \in \argmin_{X\in\mathcal X} q_k(X).
\end{equation}
In particular, since $X^{(k)}\in\mathcal X$,
$q_k(X^{(k+1)}) \le q_k(X^{(k)})$.
Because $X^{(k)}$ and $X^{(k+1)}$ are feasible, this inequality becomes
$f_1(X^{(k)}) - f_1(X^{(k+1)})
\ge \inner{\zeta^{(k)}}{X^{(k)} - X^{(k+1)}}$.
We obtain
\begin{equation}
\label{eq:routeB-W-lower}
W_k
\Let f_1(X^{(k)}) - f_1(X^{(k+1)}) - \inner{\zeta^{(k)}}{X^{(k)} - X^{(k+1)}} \ge 0.
\end{equation}

Convexity of $f_2$ and $\zeta^{(k)} \in \partial f_2(X^{(k)})$ give
$f_2(X^{(k+1)}) \ge f_2(X^{(k)}) + \inner{\zeta^{(k)}}{X^{(k+1)} - X^{(k)}}$.
So 
\[
\begin{aligned}
f(X^{(k)}) - f(X^{(k+1)})
&= \bigl(f_1(X^{(k)}) - f_1(X^{(k+1)}) - \inner{\zeta^{(k)}}{X^{(k)} - X^{(k+1)}}\bigr) \\ 
&\qquad + \bigl(\inner{\zeta^{(k)}}{X^{(k)} - X^{(k+1)}} - (f_2(X^{(k)}) - f_2(X^{(k+1)}))\bigr)
\ge W_k.
\end{aligned}
\]
Together with \eqref{eq:routeB-W-lower}, this shows that $f(X^{(k+1)})\le f(X^{(k)})$ for every completed iteration.
Because $\mathcal X$ is compact and $f$ is continuous, $f$ attains a finite minimum value $f\opt = \min_{X \in \mathcal X} f(X)$.
Summing the previous inequality from $k = 0$ to $m-1$, for any $m$ not exceeding the number of completed DCA iterations, gives the finite-run bound
\[
0\le \sum_{k=0}^{m-1} W_k
\le f(X^{(0)}) - f(X^{(m)})
\le f(X^{(0)}) - f\opt.
\]

For the asymptotic statement, suppose that the same DCA recursion is continued indefinitely. Letting $m\to+\infty$ in the finite-run bound gives $\sum_{k=0}^{+\infty} W_k < +\infty$, hence $W_k \to 0$.
The monotonicity $f(X^{(k+1)})\le f(X^{(k)})$ and the lower bound $f(X^{(k)})\ge f\opt$ imply that the sequence $\{f(X^{(k)})\}$ converges.
We now show that every accumulation point of $\{X^{(k)}\}$ is DC-critical. Compactness of $\mathcal X$ implies that $\{X^{(k)}\}$ has at least one accumulation point. Fix any such point $\bar X$ and extract a subsequence $\{X^{(k_j)}\}$ such that $X^{(k_j)} \to \bar X$.
Because $\{X^{(k_j+1)}\} \subseteq \mathcal X$, after passing to a further subsequence we may assume
$X^{(k_j+1)} \to \hat X$ for some $\hat X \in \mathcal X$.
Proposition~\ref{prop:subgradient-sA} shows that each $\zeta^{(k)}$ belongs to the convex hull of finitely many matrices $\{-G(z) : z \in \mathcal Z\}$, so $\{\zeta^{(k)}\}$ is bounded. Passing to a further subsequence if necessary, we may also assume
$\zeta^{(k_j)} \to \bar \zeta$ for some $\bar \zeta \in \mathbb S^d$.

The optimality condition in \eqref{eq:qk_iteration} gives $\zeta^{(k)} \in \partial \tilde f_1(X^{(k+1)})$.
By closedness of $\partial \tilde f_1$, we obtain $\bar \zeta \in \partial \tilde f_1(\hat X)$, which implies that $\hat X$ minimizes the affine surrogate $X \mapsto \tilde f_1(X) - \inner{\bar \zeta}{X}$.
The subgradient inclusion $\zeta^{(k)} \in \partial f_2(X^{(k)})$ and the closedness of $\partial f_2$ yield $\bar \zeta \in \partial f_2(\bar X)$.
Because $W_{k_j} \to 0$, we have
\[
\bigl(f_1(X^{(k_j)}) - f_1(X^{(k_j+1)})\bigr) - \inner{\zeta^{(k_j)}}{X^{(k_j)} - X^{(k_j+1)}}
\to 0.
\]
Passing to the limit along this subsequence and using continuity of $f_1$ and the inner product gives
$f_1(\bar X) - f_1(\hat X) - \inner{\bar \zeta}{\bar X - \hat X} = 0$.
Since both $\bar X$ and $\hat X$ are feasible, this identity is equivalent to
$\tilde f_1(\bar X) - \inner{\bar \zeta}{\bar X}
= \tilde f_1(\hat X) - \inner{\bar \zeta}{\hat X}$.
Thus $\bar X$ and $\hat X$ minimize the same affine surrogate.
Therefore, the convex first-order optimality condition at $\bar X$ yields
$0 \in \partial\bigl(\tilde f_1 - \inner{\bar \zeta}{\cdot}\bigr)(\bar X)
= \partial \tilde f_1(\bar X) - \bar \zeta$.
Together with $\bar \zeta \in \partial f_2(\bar X)$, we conclude
$0 \in \partial \tilde f_1(\bar X) - \partial f_2(\bar X)
= \partial\bigl(f_1 + \iota_{\mathcal X_{\rho_{\mathcal T}}}\bigr)(\bar X) - \partial f_2(\bar X)$.
This is exactly the standard DC-criticality condition $\partial h(x\opt)\cap \partial g(x\opt)\neq\emptyset$ in \cite[Section~1]{ref:le2014dc}, applied with $g = f_1 + \iota_{\mathcal X_{\rho_{\mathcal T}}}$ and $h = f_2$. Thus $\bar X$ is DC-critical.
\end{proof}

\section{Transfer Test in Sha Tin}
\label{ec:sha-tin-transfer}

We repeat the evaluation design from Section~\ref{sec:experiments:setup} in Sha Tin, replacing the depot pair by Sha Tin Ambulance Depot (SHA) and Tin Sum Ambulance Depot (TIN). We build the local directed OSMnx road network, map historical OHCA demands to the nearest nodes, and replay each demand at the evaluation time. For each incident, the Google primary path is the Google-recommended path from the depot, among SHA and TIN, with the shorter Google-predicted travel time at dispatch. Algorithm~\ref{alg:DCA-algorithm} is then run conditional on this primary path to obtain the secondary path $z_2$. The candidate set includes the Google-recommended paths from SHA and TIN with adaptive rerouting and the implemented IDEAL secondary path $z_2$ using the hybrid execution rule in Appendix~\ref{ec:hybrid-simulation}.

Figure~\ref{fig:opt-rate-sha-tin} summarizes candidate-optimal-rate performance in the Sha Tin transfer test. Region-based, Google primary, and Google dual dispatch achieve candidate-optimal rates of 63.7\%, 72.1\%, and 73.9\%, respectively. The always-dual IDEAL endpoint, denoted by IDEAL dual, reaches a 98.2\% candidate-optimal rate. Another IDEAL operating point exceeds the strongest baseline, reaching a 74.8\% candidate-optimal rate with only a 12.6\% increase in average dispatch volume over single-dispatch baselines. These results show that the same IDEAL overlay-and-replay protocol can be applied to a different pair of road networks and depots.

\begin{figure}[ht]
    \centering
    \includegraphics[width=0.5\linewidth]{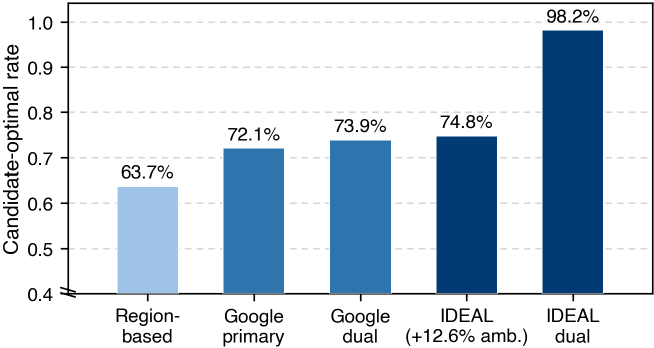}
    \caption{Candidate-optimal rates in the Sha Tin transfer test. The figure marks the always-dual IDEAL operating point and another IDEAL operating point that exceeds the strongest baseline with a lower average dispatch volume.}
    \label{fig:opt-rate-sha-tin}
\end{figure}

Figure~\ref{fig:pareto-sha-tin} shows the regret--dispatch-volume trade-off. At $\bar a=2$, IDEAL lowers mean regret from 8.28~seconds under Google dual dispatch to 0.33~seconds, and lowers $\mathrm{CVaR}_{0.95}$ regret from 126.5~seconds to 6.5~seconds.
Figure~\ref{fig:regret-cdf-sha-tin} and Table~\ref{tab:regret-metrics-sha-tin} report the regret distribution. IDEAL dual has zero regret through the 95th percentile, whereas Google dual dispatch has 95th-percentile regret of 46.8~seconds. The table shows the same pattern across mean, P95, P99, and CVaR metrics.

\begin{figure}[h]
    \centering
    \includegraphics[width=\linewidth]{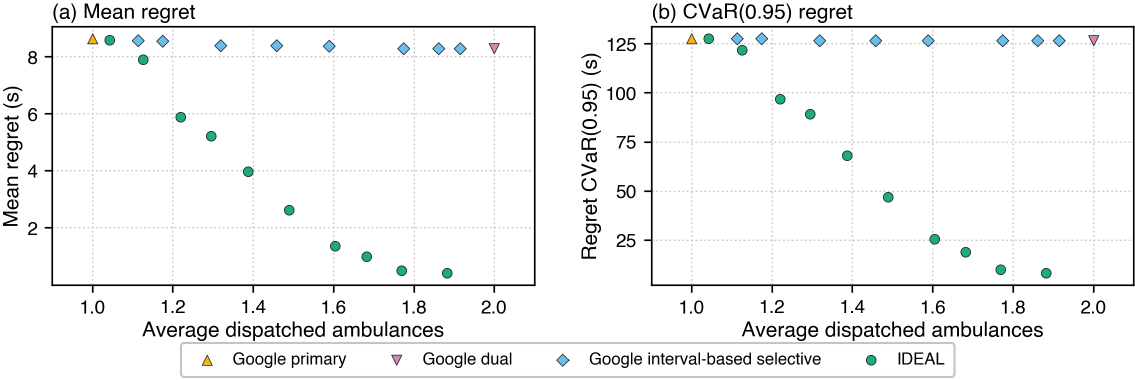}
    \caption{Mean-regret and tail-risk trade-offs for the same threshold sweep in the Sha Tin transfer test. Left: mean regret versus average dispatch volume. Right: $\mathrm{CVaR}_{0.95}$ regret versus average dispatch volume.}
    \label{fig:pareto-sha-tin}
\end{figure}

\begin{figure}[h]
    \centering
    \includegraphics[width=0.45\linewidth]{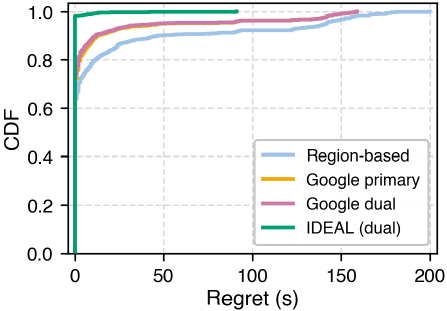}
    \caption{Empirical CDF of regret in the Sha Tin transfer test. For each regret $x$, the curve reports the fraction of incidents with regret at most $x$.}
    \label{fig:regret-cdf-sha-tin}
\end{figure}

\begin{table}[h]
    \centering
    \small
    \setlength{\tabcolsep}{3pt}
    \caption{Summary statistics of regret in seconds in the Sha Tin transfer test. Rows containing a percentage correspond to threshold-based selective policies at selected dispatch-volume levels, where the percentage denotes the increase in average dispatch volume relative to a single-dispatch policy, and \textbf{IDEAL, dual} is the always-dual operating point.}
    \label{tab:regret-metrics-sha-tin}
    \begin{tabular}{lrrrrr}
    \toprule
    Strategy & Mean $\downarrow$ & P95 $\downarrow$ & P99 $\downarrow$ & $\mathrm{CVaR}_{0.95}\downarrow$ & $\mathrm{CVaR}_{0.99}\downarrow$ \\
    \midrule
Region-based & 16.5 & 141.8 & 169.9 & 158.2 & 179.2 \\
Google primary & 8.6 & 50.2 & 149.0 & 127.7 & 154.2 \\
Google interval, +25.0\% amb. & 8.5 & 50.2 & 149.0 & 127.7 & 154.2 \\
Google interval, +49.9\% amb. & 8.4 & 46.8 & 149.0 & 126.5 & 154.2 \\
Google interval, +75.2\% amb. & 8.3 & 46.8 & 149.0 & 126.5 & 154.2 \\
Google dual & 8.3 & 46.8 & 149.0 & 126.5 & 154.2 \\
\textbf{IDEAL, +25.0\% amb.} & 5.7 & 17.6 & 144.9 & 96.5 & 151.2 \\
\textbf{IDEAL, +49.9\% amb.} & 2.6 & 8.0 & 89.6 & 46.8 & 140.5 \\
\textbf{IDEAL, +75.0\% amb.} & 1.0 & 1.0 & 13.0 & 18.5 & 65.1 \\
\textbf{IDEAL, dual} & 0.3 & 0.0 & 8.9 & 6.5 & 26.1 \\
    \bottomrule
    \end{tabular}
\end{table}

Table~\ref{tab:wilcoxon-sha-tin} reports one-sided paired Wilcoxon signed-rank tests against baselines on
$\operatorname{Reg}_i(b)-\operatorname{Reg}_i(\text{IDEAL dual})$, with zero differences discarded and tied absolute differences assigned average ranks. The alternative hypothesis is that baseline regret is larger than the regret of IDEAL dual. At the 5\% significance level, we reject the null against all three baselines. The largest one-sided $p$-value occurs in the comparison with Google dual dispatch and equals $1.3\times10^{-25}$, indicating that the improvement of IDEAL dual remains statistically significant against the strongest baseline.

\begin{table}[h]
\centering
\small
\caption{One-sided paired Wilcoxon signed-rank tests on per-incident regret against non-selective baselines in the Sha Tin transfer test. The alternative hypothesis is that baseline regret exceeds the regret of \textbf{IDEAL dual}.}
\label{tab:wilcoxon-sha-tin}
\begin{tabular}{lrrrrr}
\toprule
Baseline & Evaluation $n$ & Wilcoxon $n_{\ne0}$ & Mean diff.\ (s; normal 95\% CI) & Statistic & $p$-value \\
\midrule
(1) Region-based & 705 & 247 & 16.2 [13.2, 19.2] & 30455.5 & $1.1\times 10^{-41}$ \\
(2) Google primary & 705 & 184 & 8.3 [6.2, 10.4] & 17020 & $2.7\times 10^{-32}$ \\
(3) Google dual & 705 & 197 & 7.9 [5.8, 10.1] & 18077 & $1.3\times 10^{-25}$ \\
\bottomrule
\end{tabular}
\end{table}

\end{document}